\newtheorem{cro}{Corollary}[section]
\newtheorem{defn}{Definition}[section]
\newtheorem{prop}{Proposition}[section]
\newtheorem{thm}{Theorem}[section]
\newtheorem{lem}{Lemma}[section]
\newtheorem{rem}{\bf Remark}[section]
\numberwithin{equation}{section}
\begin{document}

\title{Entropy and Emergence of Topological Dynamical Systems %Emergence for Entropy of Topological Dynamical Systems
 \footnotetext {* Corresponding author}
  \footnotetext {2010 Mathematics Subject Classification: 37B40, 54B20, 54H20 }}
\author{Yong Ji, Ercai Chen and Xiaoyao Zhou\\
	\small   School of Mathematical Sciences and Institute of Mathematics, Nanjing Normal University,\\
	\small   Nanjing 210046, Jiangsu, P.R.China\\
	\small    e-mail: imjiyong@126.com
	ecchen@njnu.edu.cn
	zhouxiaoyaodeyouxian@126.com
}
\date{}
\maketitle

\begin{center}
 \begin{minipage}{120mm}
{\small {\bf Abstract.}
	A topological dynamical system $(X,f)$ induces two natural systems, one is on the probability measure spaces and other one is on the hyperspace.
	We introduce a concept for these two spaces, which is called entropy order, and prove that it coincides with topological entropy of $(X,f)$. We also consider the entropy order of an invariant measure and a variational principle is established.
   }
\end{minipage}
 \end{center}

\vskip0.5cm {\small{\bf Keywords and phrases: }Induced transformations; dynamical emergence; entropy order; topological entropy.  }\vskip0.5cm
%%%%%%%%%%%%%%%%%%%%%%%%%%%%%%%%%%%%%%%%%%%%%%%%%%%%%%%%%%%%%%%%%%%%%%%%%%%%%%%%
%%%%%%%%%%%%%%%%%%%%%%%%%%%%%%%%%%%%%%%%%%%%%%%%%%%%%%%%%%%%%%%%%%%%%%%%%%%%%%%%
%%%%%%%%%%%%%%%%%%%%%%%%%%%%%%%%%%%%%%%%%%%%%%%%%%%%%%%%%%%%%%%%%%%%%%%%%%%%%%%%

\section{Introduction}
Throughout this paper, by a topological dynamical system we mean a pair $(X,f)$, where $X$ is a compact metric space and $f$ is a continuous map from
$X$ to itself. We use $\mathcal{M}(X)$, $\mathcal{M}_f(X)$, and $\mathcal{M}_f^{erg}(X)$ to denote the set of Borel probability measures, the set of $f$-invariant Borel probability measures, and the set of ergodic $f$-invariant measures with the weak*-topology, respectively. Let $\mathcal{K}(X)$ denote the space of nonempty closed subsets of $X$ provided with the Hausdorff metric.

A topological dynamical system $(X,f)$ induces two natural systems: $(\mathcal{M}(X), f_{\mathcal{M}})$ and $(\mathcal{K}(X),f_{\mathcal{K}})$.
The study of the connections between the dynamical properties of $(X,f)$ and the induced topological dynamical systems has attracted a lot of interest. In 1975,
Bauer and Sigmund \cite{Bauer1975} first gave a systematic study of the dynamical properties of the induced dynamical systems. It was shown that $(X,f)$ is weakly mixing (mildly mixing, strongly mixing) if and only if $(\mathcal{M}(X), f_{\mathcal{M}})$ (or $(\mathcal{K}(X),f_{\mathcal{K}})$) has the same property. In 2005, Banks \cite{Banks2005} proved that $(\mathcal{K}(X),f_{\mathcal{K}})$ is transitive if and only if $(X,f)$ is weakly mixing.
Recently, Li,  Yan and Ye \cite{LiYan2015}   investigated the dynamical properties of the connections between $(X,f)$, $(\mathcal{M}(X), f_{\mathcal{M}})$ and $(\mathcal{K}(X),f_{\mathcal{K}})$, focussing on periodic systems, P-systems, M-systems, E-systems, and   disjointness. Later, Li,  Oprocha, Ye and  Zhang  \cite{LiOprocha2017} showed that $(\mathcal{K}(X),f_{\mathcal{K}})$ is pointwise minimal if and only if $(X,f)$  is equicontinuous
and that $(\mathcal{K}(X),f_{\mathcal{K}})$ is weakly rigid if and only if $(X,f)$ is uniformly rigid.  The above investigations focussed on the qualitative study of the relations between the complexity of original system and that of its induced systems.

Inspired by the concept of Shannon entropy in information theory, Kolmogorov and Sinai introduced measure-theoretic entropy into ergodic theory.
Later, Adler, Konheim and McAndrew \cite{Adler1965} proposed a notion of topological entropy whose definition does not involve any invariant measure. After that,
the notion of entropy has played a crucial role in quantifying the degree of disorder in a system.
For a topological dynamical system $(X,f)$, by embedding the product of system into induced systems,  
Bauer and Sigmund \cite{Bauer1975} showed
\begin{equation}\label{>0}
h_{top}(X,f)>0\Rightarrow h_{top}(f_{\mathcal{M}})=h_{top}(f_{\mathcal{K}})=\infty.
\end{equation} 
In 1995,  Glasner and  Weiss \cite{Glasner1995} obtained a remarkable result:
\begin{equation}\label{glasner}
h_{top}(X,f)=0\Rightarrow h_{top}(f_{\mathcal{M}})=0.
\end{equation} 
Later, this connection was further studied by Kerr and Li \cite{Kerr2005} by shown $(X,f)$ is null if and only if $(\mathcal{M}(X),f_{\mathcal{M}})$ is null.
In 2017, Qiao and Zhou \cite{Qiao2017} extended this result to  the case of sequence entropy and showed the upper entropy dimension of the $(X,f)$ is equal to that of $(\mathcal{M}(X),f_{\mathcal{M}})$. Recently, Liu, Qiao and Xu \cite{Liu2020} proved that if the topological entropy of a nonautonomous dynamical system $(X,\{f_n\}_{n=1}^\infty)$ vanish, then so does that of its induced system $(\mathcal{M}(X),\{f_n\}_{n=1}^\infty)$; and once the topological entropy of $(X,\{f_n\}_{n=1}^\infty)$ is positive, that of its induced system $(\mathcal{M}(X),\{f_n\}_{n=1}^\infty)$ jumps to infinity.

While the topological property of induced transformations has been extensively studied, their quantitative relationship with underlying systems is rarely considered. From formulas (\ref{>0}) and (\ref{glasner}), the induced systems are more complicated. 

Emergence is one of the most important concept in complexity science \cite{MacKay2008}.
It was introduced by Berger \cite{Berger2017} in a compatible viewpoint to quantify the complexity of the statistical behavior of typical orbits for general dynamical systems. And further developed in \cite{Berger2019} with Bochi. Let $(X,f)$ be a topological dynamical system.
\begin{defn}\rm(Metric emergence \cite{Berger2017})
	For any $\mu\in\mathcal{M}(X)$, the metric emergence $\mathcal{E}_\mu(f)$ of $f$ at scale $\varepsilon>0$ is the minimal number $N$ of probability measure $\{ \nu_i \}_{1\leq i\leq N}$ so that 
	$$\limsup_{n\to\infty}\int_Xd\left(e_n^f(x),\{\nu_i \}_{1\leq i\leq N} \right)\,\mathrm{d}\mu(x)<\varepsilon.$$
\end{defn}
\begin{defn}\rm(Topological emergence \cite{Berger2019})
	The topological emergence $\mathcal{E}_{top}(f)(\varepsilon)$ of $f$ is the function which associates to $\varepsilon>0$ the minimal number of $\varepsilon$-balls of $\mathcal{M}(X)$ whose union covers $\mathcal{M}_f^{erg}(X)$.
\end{defn}  

In \cite{Berger2019}, Berger and Bochi showed that the order of the topological emergence of a system $f$ on a compact manifold of dimension $d$ is at most $d$. And they provided examples of systems which the upper bound is generically attained.
\begin{thm}{\rm\cite[Theorem A]{Berger2019}}\label{mxcase}
	Let $f$ be $C^{1+\alpha}$-mapping of a manifold which admits a basic hyperbolic set $K$ with box dimension $d$. Assume that $f$ is conformal expanding or that $f$ is a conservative surface diffeomorphism. Then the topological emergence $f|K$ is stretched exponential with exponent $d$:
	\begin{equation*}
	\lim_{\epsilon\to0}\frac{\log\log\mathcal{E}_{top}(f|K)(\varepsilon)}{-\log\varepsilon}=d.
	\end{equation*}
\end{thm}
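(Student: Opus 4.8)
The plan is to recast the statement as a two-sided estimate for the covering numbers $N(\varepsilon):=\mathcal{E}_{top}(f|K)(\varepsilon)$ of the set of ergodic measures $\mathcal{M}_{f}^{erg}(K)$ inside $(\mathcal{M}(K),\mathrm{d})$, where $\mathrm{d}$ is a metric compatible with the weak$^*$ topology (say the Wasserstein metric $W_{1}$). Since the asserted limit only sees the double logarithm, it suffices to produce constants $0<c\le C<\infty$ and two bounds
\begin{equation*}
\exp\!\big(c\,\varepsilon^{-d}\big)\ \le\ N(\varepsilon)\ \le\ \exp\!\big(C\,\varepsilon^{-d}\log(1/\varepsilon)\big)
\end{equation*}
for all small $\varepsilon$; taking $\log\log$ and dividing by $-\log\varepsilon$ then squeezes the limit to $d$, because the spurious $\log(1/\varepsilon)$ factor contributes only a lower-order term. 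Thus the whole problem reduces to counting, on one side, how economically the ergodic measures can be covered and, on the other, how many mutually separated ergodic measures can be manufactured.

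For the upper bound I would forget ergodicity entirely and cover the ambient space $\mathcal{M}(K)$. By definition of box dimension one can cover $K$ by $M\asymp\varepsilon^{-d}$ balls of radius $\varepsilon$; collapsing each point of $K$ to the centre of a ball containing it defines a retraction $\mathcal{M}(K)\to\Delta_{M-1}$ that moves every measure by at most $\varepsilon$ in $W_{1}$. An $\varepsilon$-net of the $(M-1)$-dimensional simplex in the transported metric has cardinality at most $(C/\varepsilon)^{M}$, whence $\log N(\varepsilon)\le M\log(C/\varepsilon)\asymp\varepsilon^{-d}\log(1/\varepsilon)$. This already gives $\limsup_{\varepsilon\to0}\log\log N(\varepsilon)/(-\log\varepsilon)\le d$ and uses nothing beyond the box dimension of $K$.

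The lower bound is the crux, and here the hyperbolic and conformal structure enters. Passing to a Markov partition I would code $f|K$ by a topologically mixing subshift of finite type and invoke the thermodynamic formalism: in the conformal expanding case bounded distortion makes the diameter of an $n$-cylinder comparable to $\exp(-S_{n}\log|f'|)$, so the number $Z(\varepsilon)$ of cylinders of size $\approx\varepsilon$ satisfies $Z(\varepsilon)\,\varepsilon^{d}\asymp1$ precisely because Bowen's equation $P(-d\log|f'|)=0$ holds at the dimension $d$; thus there are $M\asymp\varepsilon^{-d}$ pairwise $\varepsilon$-separated cells $C_{1},\dots,C_{M}$. For each subset $S\subseteq\{1,\dots,M\}$ of size $\lfloor M/2\rfloor$ I would use specification (gluing orbit segments that visit exactly the cells indexed by $S$ with equal frequency) to build an ergodic measure $\mu_{S}$ with $\mu_{S}(C_{i})\approx 2/M$ for $i\in S$ and $\approx0$ otherwise. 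Since transporting a definite fraction of mass between distinct cells costs $\gtrsim\varepsilon$ in $W_{1}$, one gets $W_{1}(\mu_{S},\mu_{T})\gtrsim\varepsilon$ whenever $|S\triangle T|\gtrsim M$; a Gilbert--Varshamov count supplies $\exp(cM)=\exp(c\varepsilon^{-d})$ such subsets with large pairwise symmetric differences, hence that many $\varepsilon$-separated ergodic measures, forcing $N(\varepsilon)\ge\exp(c\varepsilon^{-d})$ and $\liminf_{\varepsilon\to0}\log\log N(\varepsilon)/(-\log\varepsilon)\ge d$.

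The conservative surface case requires extra care because conformality fails, and this is where I expect the main technical obstacle. Here I would exploit that the stable and unstable directions are one-dimensional, so the dynamics is conformal along each of them separately, and use the local product structure of the basic set: the box dimension splits as $d=d_{s}+d_{u}$ by the McCluskey--Manning formula, with $d_{s},d_{u}$ given by Bowen-type equations on the stable and unstable conformal factors, and the number of $\varepsilon$-boxes is $\varepsilon^{-d_{s}}\cdot\varepsilon^{-d_{u}}\asymp\varepsilon^{-d}$; conservativity is used to control the distortion and keep the two factors balanced. The delicate point throughout the lower bound is to carry out three requirements simultaneously---realizing prescribed $\varepsilon$-scale statistics by genuinely ergodic invariant measures, guaranteeing their Wasserstein separation, and matching the cell count to the box dimension through the pressure equation---while in the surface setting additionally replacing the single conformal derivative by the product of the one-dimensional stable and unstable rates.
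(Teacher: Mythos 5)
This statement is not proved in the paper: it is Theorem A of Berger--Bochi, imported verbatim from \cite{Berger2019} purely as motivation for the definitions of dynamical emergence and entropy order, so there is no in-paper proof to compare your argument against. That said, your sketch is a faithful reconstruction of the strategy actually used in \cite{Berger2019}, and it closely parallels the one proof of this flavour that the paper \emph{does} contain, namely Theorem \ref{conformal}: the upper bound there is also obtained by covering the ambient induced space from a covering of $K$ of cardinality $\varepsilon^{-d+o(1)}$ (for measures this is exactly the Bolley--Guillin--Villani estimate $\mathcal{N}_p(X,\delta)\leq(8eD/\delta)^{pN(X,\delta/2)}$ quoted in Section 4, which is your simplex-net bound), and the lower bound is also obtained by taking the measure of maximal dimension with $d=h_\mu/\chi_\mu$, producing $e^{(h_\mu-2\delta)n}$ periodic orbits at scale $\varepsilon_n\approx e^{-\chi_\mu n}$ via specification, and running a Hamming-distance/subset-counting argument to get $\exp(c\varepsilon^{-d+o(1)})$ separated objects. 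Your Bowen-equation bookkeeping $Z(\varepsilon)\varepsilon^{d}\asymp 1$ and the Gilbert--Varshamov count are the same mechanism in different clothing, and using single periodic orbits guarantees ergodicity of the $\mu_S$, which is the point where the convexity trick of Lemma \ref{ky} would fail.

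Two caveats. First, make sure the cells are separated by a margin strictly larger than the shadowing error, so that a definite fraction of mass really must travel distance $\gtrsim\varepsilon$ between $\mu_S$ and $\mu_T$; this is routine but needs saying. Second, and more seriously, your treatment of the conservative surface case is not just ``extra care'': for a general basic set of a surface diffeomorphism the supremum of Hausdorff dimensions of ergodic measures can be strictly smaller than the box dimension of the set, and conservativity is used not to ``control distortion'' but to force the stable and unstable dimensions of the relevant measure to add up to $\dim K$ (because the Jacobian is $1$, the stable and unstable data are balanced). As written, that half of your lower bound is a statement of intent rather than an argument, and it is precisely the part that does not reduce to the conformal computation.
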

Based on these examples, they pointed out that emergence and entropy are completely unrelated, in the sense that there exist systems with positive metric entropy but minimal metric emergence, and stretched exponential emergence but zero topological entropy. Following their ideas, and inspired by the relationship between dimension and entropy, we will introduce the notions of dynamical emergence and entropy order for induced spaces $\mathcal{M}(X)$ and $\mathcal{K}(X)$. See \cite{Berger2017, Berger2020, Berger2019} for more information on emergence.

Now, we state our main results:
firstly, we show that the entropy orders of $\mathcal{M}(X)$ and $\mathcal{K}(X)$ equal to the topological entropy of $(X,f)$ . 
\begin{thm}\label{mh}
	Let $(X,f)$ be a dynamical system. Then
	$$\mathcal{E}(f_{\mathcal{M}})=h_{top}(f).$$
\end{thm}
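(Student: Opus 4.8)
\section*{Proof proposal for Theorem \ref{mh}}

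The plan is to bound the entropy order from both sides by estimating, at a fixed scale $\varepsilon$ and time $n$, the covering (equivalently, separated) numbers of the induced system, and then to recognise the resulting expression as a \emph{double} logarithm of the base dynamical covering numbers, whose exponential rate in $n$ is exactly $h_{top}(f)$. Write $d_n(x,y)=\max_{0\le k<n}d(f^kx,f^ky)$ for the Bowen metric on $X$, and let $\mathcal S_n(\varepsilon)$ denote the number of $d_n$-balls of radius $\varepsilon$ needed to cover $X$, so that $h_{top}(f)=\lim_{\varepsilon\to0}\limsup_{n\to\infty}\frac1n\log\mathcal S_n(\varepsilon)$. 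On $\mathcal M(X)$ I would work with the Wasserstein (Kantorovich--Rubinstein) metric $W_d$ and its time-$n$ lift $W_{d_n}$, and let $\mathcal C_n(\varepsilon)$ be the scale-$\varepsilon$, time-$n$ covering number of $\mathcal M(X)$ entering the definition of the entropy order. Unwinding that definition, the whole theorem reduces to the single correspondence
\begin{equation*}
\log \mathcal C_n(\varepsilon)\ \asymp\ \mathcal S_n(\varepsilon)\,\log(1/\varepsilon),
\end{equation*}
valid up to multiplicative constants and a harmless change of scale $\varepsilon\mapsto\varepsilon^{c}$. Indeed, one further logarithm turns the right-hand side into $\log\mathcal S_n(\varepsilon)+\log\log(1/\varepsilon)$, and then $\frac1n\log\log\mathcal C_n(\varepsilon)\to \frac1n\log\mathcal S_n(\varepsilon)$ as $n\to\infty$, which tends to $h_{top}(f)$ as $\varepsilon\to0$.

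For the upper bound $\mathcal E(f_{\mathcal M})\le h_{top}(f)$ I would fix a $d_n$-$\varepsilon$-spanning set $E\subseteq X$ with $|E|=\mathcal S_n(\varepsilon)$ and observe, via Kantorovich duality, that every $\mu\in\mathcal M(X)$ is $W_{d_n}$-approximated by a measure supported on $E$ whose atomic masses are drawn from a finite grid. Counting admissible mass vectors on $|E|$ atoms at grid spacing comparable to $\varepsilon/\mathcal S_n(\varepsilon)$ (the refinement needed so that close cell-masses force small Wasserstein distance) gives $\log\mathcal C_n(\varepsilon)\lesssim \mathcal S_n(\varepsilon)\log(\mathcal S_n(\varepsilon)/\varepsilon)$. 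Since $\log\mathcal S_n(\varepsilon)$ is itself only of order $n$, the inner factor contributes merely an additive $\log n$ after the outer logarithm, so $\frac1n\log\log\mathcal C_n(\varepsilon)\le h(\varepsilon)+o(1)$ with $h(\varepsilon)=\limsup_n\frac1n\log\mathcal S_n(\varepsilon)$, and letting $\varepsilon\to0$ closes this side.

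For the lower bound $\mathcal E(f_{\mathcal M})\ge h_{top}(f)$ I would instead take a $d_n$-$\varepsilon$-separated set $E=\{x_1,\dots,x_M\}$ with $M=\mathcal S_n(\varepsilon)$ and pack the atomic measures $\mu_{\mathbf a}=\sum_i a_i\delta_{x_i}$ with $\mathbf a$ ranging over an $\eta$-grid of the simplex $\Delta^{M-1}$. Because the atoms are pairwise $d_n$-separated by $\varepsilon$, any coupling must transport the discrepancy mass a $d_n$-distance at least $\varepsilon$, whence $W_{d_n}(\mu_{\mathbf a},\mu_{\mathbf b})\ge \tfrac{\varepsilon}{2}\|\mathbf a-\mathbf b\|_1$; thus the $(1/\eta)^{M}$ grid measures are $\tfrac{\varepsilon\eta}{2}$-separated and $\log\mathcal C_n(\varepsilon\eta/2)\gtrsim \mathcal S_n(\varepsilon)\log(1/\eta)$. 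Balancing the spatial scale against the mass grid (for instance taking spatial scale $\sqrt{\varepsilon}$ against target scale $\varepsilon$) absorbs the loss, and after the double logarithm and division by $n$ one recovers $\frac1n\log\log\mathcal C_n(\varepsilon)\ge h(\sqrt\varepsilon)+o(1)\to h_{top}(f)$.

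The main obstacle is precisely this two-scale bookkeeping in the Wasserstein--covering correspondence. A single outer logarithm must convert the covering number of $\mathcal M(X)$ into that of $X$, and it is essential that the spatial resolution $\log(1/\varepsilon)$ -- the quantity responsible for the Berger--Bochi emergence/dimension estimate -- enters only \emph{inside} the inner exponential and is therefore annihilated by the outer logarithm, leaving the purely dynamical rate $\lim_n\frac1n\log\mathcal S_n(\varepsilon)=h(\varepsilon)$ and, in the limit, exactly $h_{top}(f)$ rather than a product of entropy with a dimensional factor. The delicate points to verify rigorously are that the grid-packing measures admit no spurious coincidences under the optimal coupling, that the polynomial mismatch between the mass grid and the spatial cover is genuinely washed out in the final $\varepsilon\to0$ limit, and that the metric actually used to define the dynamical emergence is controlled above and below by $W_{d_n}$; these are where the real work lies. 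The hyperspace statement for $f_{\mathcal K}$ is expected to follow from the same scheme, with nonempty closed subsets of an $(n,\varepsilon)$-separated set replacing grid measures and the Hausdorff metric replacing $W_{d_n}$.
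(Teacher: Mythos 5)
Your upper bound is essentially the paper's argument: the paper simply quotes the Bolley--Guillin--Villani covering estimate $\mathcal N_p(X,\delta)\le (8eD/\delta)^{pN(X,\delta/2)}$ applied to the Bowen metric $d_n$, which is exactly the ``spanning set plus mass grid'' discretization you describe; the factor $\log(1/\varepsilon)$ and the polynomial-in-$n$ inner terms are indeed killed by the outer logarithm. That half is fine.

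The lower bound, however, has a genuine gap. The number of points of an $\eta$-grid of the simplex $\Delta^{M-1}$ is not $(1/\eta)^M$ but $\binom{M-1+1/\eta}{1/\eta}$, which for \emph{fixed} $\eta$ is only polynomial in $M$ (of degree $1/\eta$); the count $(1/\eta)^M$ is for the unconstrained product grid and ignores $\sum_i a_i=1$. With only polynomially many separated measures, $\log\log\mathcal C_n(\varepsilon\eta/2)\approx\log\log M\approx\log n$ and the rate is $0$, not $h_{top}(f)$. To repair this by refining the grid you must take $\eta\sim 1/M$, but then the separation scale $\varepsilon\eta/2\sim\varepsilon/\mathcal S_n(\varepsilon)$ collapses to $0$ as $n\to\infty$, whereas the definition of the entropy order fixes the scale \emph{before} letting $n\to\infty$; no choice of ``balancing'' (e.g.\ $\sqrt\varepsilon$ versus $\varepsilon$) rescues this, because the obstruction is in $n$, not in $\varepsilon$. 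The missing idea --- and the one the paper uses, following Berger--Bochi --- is a coding-theoretic selection: among the balanced $0/1$ vectors $\phi$ on the $(n,\varepsilon)$-separated (indeed $(n,\varepsilon)$-apart) family one extracts, via a Gilbert--Varshamov/Bernstein-inequality volume bound, a subfamily $F'$ of cardinality $e^{cM}$ whose pairwise Hamming distance is at least $M/4$. The associated uniform measures $\mu_\phi=\frac2M\sum_i\phi(i)\nu_i$ then put mass at least $1/4$ on symmetric-difference supports that are $d_n$-separated by $\varepsilon$, forcing $W_1^n(\mu_{\phi_1},\mu_{\phi_2})\ge\varepsilon/4$: exponentially many measures separated at a scale independent of $n$. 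Your transport inequality $W_1^n(\mu_{\mathbf a},\mu_{\mathbf b})\ge\frac{\varepsilon}{2}\|\mathbf a-\mathbf b\|_1$ is correct and is exactly what makes that construction work, but it must be fed vectors with $\ell^1$-distance bounded below uniformly in $M$, which a simplex grid cannot supply in exponential number.
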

\begin{thm}\label{K}
	Let $(X,f)$ be a dynamical system. Then
	$$\mathcal{E}(f_{\mathcal{K}})=h_{top}(f).$$
\end{thm}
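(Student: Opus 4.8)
The plan is to establish the two inequalities $\mathcal{E}(f_{\mathcal{K}})\le h_{top}(f)$ and $\mathcal{E}(f_{\mathcal{K}})\ge h_{top}(f)$ separately, working throughout with the Bowen metric of the induced map, $d_{H,n}(K,L)=\max_{0\le j<n}d_H(f^jK,f^jL)$, where $d_H$ is the Hausdorff metric on $\mathcal{K}(X)$. The governing heuristic is that, up to scale $\varepsilon$ over $n$ steps, a closed set is determined by which of the $\approx e^{nh_{top}(f)}$ dynamically distinguishable $(n,\varepsilon)$-cells it meets; since there are doubly exponentially many such ``footprints,'' the double logarithm in the definition of the entropy order should recover $h_{top}(f)$.

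For the upper bound I would fix $\varepsilon>0$, take a minimal $(n,\varepsilon/2)$-spanning set $F\subseteq X$ for the Bowen metric $d_n$, and assign to each $K\in\mathcal{K}(X)$ its footprint $G_K=\{y\in F:B_n(y,\varepsilon/2)\cap K\neq\emptyset\}$, where $B_n(y,\varepsilon/2)$ is a Bowen ball. A direct check shows $G_K=G_L\Rightarrow d_{H,n}(K,L)<\varepsilon$: every point of $K$ lies in some $B_n(y,\varepsilon/2)$ with $y\in G_K=G_L$, hence lies within $\varepsilon$ of a point of $L$ at each of the first $n$ times, and symmetrically. Consequently the number of $\varepsilon$-balls needed to cover $\mathcal{K}(X)$ in $d_{H,n}$ is at most $2^{|F|}$, so that $\tfrac1n\log\log(\#\text{balls})\le\tfrac1n\log(|F|\log 2)$; letting $n\to\infty$ and then $\varepsilon\to0$ yields $\mathcal{E}(f_{\mathcal{K}})\le h_{top}(f)$.

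For the lower bound I would begin from a maximal $(n,\varepsilon)$-separated set $E\subseteq X$ of cardinality $r(n,\varepsilon)\approx e^{nh_{top}(f,\varepsilon)}$ and consider the finite closed sets $K_A=\{x_i:i\in A\}$ for $\emptyset\neq A\subseteq E$. If a subfamily of size $2^{c|E|}$ of these can be shown to be $\delta$-separated in $d_{H,n}$ for some fixed $\delta$ comparable to $\varepsilon$, then $\tfrac1n\log\log s_{\mathcal{K}}(n,\delta)\ge\tfrac1n\log(c|E|\log 2)$, which converges to $h_{top}(f,\varepsilon)$, and $\varepsilon\to0$ gives $\mathcal{E}(f_{\mathcal{K}})\ge h_{top}(f)$. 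The clean statement one would like is that distinct subsets of $E$ are automatically separated; this is true in the Hausdorff metric $d_H^{(n)}$ built directly from $d_n$, because a point of $A\setminus B$ is $d_n$-distant from every point of $B$, but only $d_{H,n}\le d_H^{(n)}$ holds and the inequality is in general strict.

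This gap is precisely the main obstacle. It reflects a shadowing phenomenon: the set $K_A$ can be $d_{H,n}$-close to a proper subset $K_B$ whenever each point omitted from $B$ is, at every single time $0\le j<n$, approximated within $\delta$ by some point of $K_B$ --- even though that approximating point varies with $j$ and is itself $(n,\varepsilon)$-separated from the omitted one. To defeat shadowing I plan to pass to a large ``dynamically isolated'' subset $E_0\subseteq E$ with the property that every $x_i\in E_0$ admits a time $j_i<n$ at which it is $\delta$-separated from all other points of $E_0$ simultaneously; for such $E_0$ the point $x_i$ can never be shadowed, so all $2^{|E_0|}$ sets $\{K_A:A\subseteq E_0\}$ are pairwise $\delta$-separated. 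Producing such an $E_0$ with $|E_0|\ge c|E|$ --- equivalently, showing that pervasive shadowing cannot coexist with the $(n,\varepsilon)$-separation of $E$ except on a controlled fraction of points --- is where essentially all the difficulty concentrates, and I expect to handle it by a greedy or counting selection of isolating times, mirroring the role that the incompressibility of weights plays in the already-established measure case of Theorem \ref{mh}.
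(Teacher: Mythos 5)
Your upper bound is fine and is essentially the paper's argument: the footprint map $K\mapsto G_K$ in fact shows $G_K=G_L\Rightarrow H^n(K,L)\le\varepsilon$, where $H^n$ is the Hausdorff metric built from the Bowen metric $d_n$ on $X$ as in (\ref{hn}), so $N_\mathcal{K}(n,\varepsilon)\le 2^{N(f,n,\varepsilon)}$ in the sense the paper actually uses. The lower bound is where the proposal breaks, and the break comes from working with the wrong metric. In Definition \ref{kme} the spanning and separated sets defining $\mathcal{E}(f_{\mathcal{K}})$ are measured in $H^n$, \emph{not} in the Bowen metric $H_n(B,C)=\max_{0\le i<n}H(f^iB,f^iC)$ of the induced system, which is your $d_{H,n}$. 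With $H^n$ the ``clean statement'' you wished for is simply true: if $E$ is $(n,\varepsilon)$-separated in $X$ and $A\ne B$ are nonempty subsets of $E$, pick $x\in A\setminus B$; then $d_n(x,y)\ge\varepsilon$ for every $y\in B$, so $x\notin B_n^{\varepsilon}$ and hence $H^n(K_A,K_B)\ge\varepsilon$. Thus all $2^{S(f,n,\varepsilon)}-1$ nonempty subsets are pairwise separated at any scale below $\varepsilon$, which already gives $\mathcal{E}(f_{\mathcal{K}})\ge h_{top}(f)$ with no further work. (The paper routes this through the Hamming-separated family $F'$ of Lemma \ref{ky} only to reuse notation from the measure case; that refinement is not needed here.)

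The plan you outline for repairing the lower bound in $d_{H,n}$ cannot succeed, because with that metric the statement you are aiming at is false for every system of positive entropy. Since $(\mathcal{K}(X),H)$ is compact, fix a finite cover of $\mathcal{K}(X)$ by $N(\mathcal{K}(X),\delta/2)$ Hausdorff balls of radius $\delta/2$ and record, for each element of a $(n,\delta)$-separated family for $d_{H,n}$, which ball contains each of its first $n$ iterates; this itinerary map is injective, so the family has cardinality at most $N(\mathcal{K}(X),\delta/2)^n$. Hence $\frac1n\log\log$ of these separated numbers tends to $0$ for every $\delta>0$ --- this is exactly the remark the paper makes after Definition \ref{mem} for $\mathcal{M}(X)$ with the metrics $W_{1,n}$, and it is the reason the definitions use $H^n$ and $W_1^n$ in the first place. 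In particular, the set $E_0\subset E$ you hope to extract, with $\#E_0\ge c\,\#E\approx c\,e^{nh_{top}(f)}$ and all $2^{\#E_0}$ subsets pairwise $\delta$-separated in $d_{H,n}$, would yield doubly exponentially many $d_{H,n}$-separated sets, contradicting the single-exponential bound above; such an $E_0$ can have at most $O(n)$ elements. So the ``shadowing'' you identify is not a technical nuisance to be defeated by a greedy selection but an unavoidable collapse: the doubly exponential growth that makes the entropy order equal $h_{top}(f)$ is visible only through $H^n$.
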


Secondly, we show that for dynamical systems satisfying some conditions, the entropy order of some subset is sufficient to reflect that of the whole space. Denote the collection of ergodic probability measures supported on the orbit of a periodic point by $\mathcal{M}_f^{per}(X)$, and let $\mathcal{K}_f(X):=\{B\in\mathcal{K}(X):f(B)=B\}$, i.e., the set of fixed points of $f_{\mathcal{K}}$.
\begin{thm}\label{expansive}
	Let $(X,d)$ be a compact metric space and suppose $f:X\to X$ is a positively expansive map with specification property. Then $$h_{top}(f)=\mathcal{E}(f_{\mathcal{M}},\mathcal{M}_f^{per}(X)).$$
\end{thm}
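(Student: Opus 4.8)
The plan is to prove the two inequalities separately, taking the easy direction from Theorem~\ref{mh}. Since $\mathcal{M}_f^{per}(X)\subseteq\mathcal{M}(X)$, and the entropy order of a subset is defined through covering (or separated) numbers of that subset and is therefore monotone under inclusion, one immediately gets $\mathcal{E}(f_{\mathcal{M}},\mathcal{M}_f^{per}(X))\le\mathcal{E}(f_{\mathcal{M}})=h_{top}(f)$ by Theorem~\ref{mh}. Thus the whole content of the theorem is the reverse bound $\mathcal{E}(f_{\mathcal{M}},\mathcal{M}_f^{per}(X))\ge h_{top}(f)$: the periodic-orbit measures alone must already generate the full (doubly exponential) complexity of the induced space, and this is where positive expansiveness and specification enter.

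First I would fix the right scale. Positive expansiveness furnishes an expansive constant $c>0$ and a finite generator, so that for small $\delta\le c$ the maximal cardinality $s_n(\delta)$ of an $(n,\delta)$-separated subset of $X$ satisfies $\tfrac1n\log s_n(\delta)\to h_{top}(f)=:h$, and—crucially—distinct $(n,\delta)$-separated orbit blocks of length $n$ cannot be confused by the dynamics over that time window. Fix $\eta>0$ and, for large $n$, a maximal $(n,\delta)$-separated set $S_n\subseteq X$ with $|S_n|\ge e^{n(h-\eta)}$. These length-$n$ blocks will be the alphabet from which periodic orbits are assembled. I would then use specification to realize a doubly exponential family of periodic measures indexed by frequency profiles over $S_n$. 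Let $M=M(\delta)$ be the specification gap. Given a probability vector $p=(p_s)_{s\in S_n}$ drawn from a fixed $\gamma$-net of the simplex on $S_n$, choose $N$ large and a word $w\in S_n^{N}$ whose empirical symbol frequencies approximate $p$; specification produces a point whose orbit $\delta$-shadows the concatenation of the blocks listed by $w$ with gaps $M$, and closing this orbit up yields a periodic point and hence a measure $\mu_{p}\in\mathcal{M}_f^{per}(X)$. Since $M$ is fixed while $n$ is large, the gaps occupy a vanishing fraction of the period, so the ergodic measure $\mu_{p}$ lies within $\varepsilon/2$ of $\sum_{s}p_s\,e_n^{f}(s)$ in the metric governing the entropy order, and the number of $\gamma$-separated profiles is at least $\exp\!\big(\Omega(|S_n|)\big)\ge\exp\!\big(e^{n(h-\eta)}\big)$, a genuine double exponential in $n$.

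The main obstacle—and the step I would spend the most effort on—is to convert block-level separation into a \emph{uniform} separation of the measures $\mu_p$ at a scale depending only on $\delta$ and $\varepsilon$, not on $n$. Concretely, I must show that $\gamma$-apart profiles $p\ne p'$ force $\mu_p$ and $\mu_{p'}$ to be $\varepsilon$-separated in the metric defining $\mathcal{E}$, so that the $\exp\!\big(e^{n(h-\eta)}\big)$ profiles yield that many $(n,\varepsilon)$-distinguishable points of $\mathcal{M}_f^{per}(X)$. This is precisely where positive expansiveness is indispensable: it guarantees that the menu measures $\{e_n^{f}(s):s\in S_n\}$ stay resolved under the time-$n$ refinement and cannot be re-matched (``averaged out'') across the long concatenated orbit, so that differences in the weights $p_s$ are detected at scale $\gtrsim\delta$ rather than smeared away by transport.

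Granting this separation, the covering/separated count at scale $\varepsilon$ and refinement level $n$ is at least $\exp\!\big(e^{n(h-\eta)}\big)$, whence $\tfrac1n\log\log(\cdot)\ge h-\eta-o(1)$; since $\eta>0$ was arbitrary this gives $\limsup_{n}\ge h$ for every $\varepsilon$, and therefore $\mathcal{E}(f_{\mathcal{M}},\mathcal{M}_f^{per}(X))\ge h$, completing the proof. A secondary point to watch is the bookkeeping that matches the definition's normalization—the scale $\varepsilon$ against the refinement level $n$—so that the double logarithm produces exactly $h$ rather than a metric-dependent multiple of it; I expect to handle this by working throughout at the expansive scale $\delta$, where the symbolic coding supplied by the generator makes the cylinder-frequency count transparent.
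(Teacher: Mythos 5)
Your easy direction ($\le$) matches the paper's. For the hard direction you take a genuinely different route from the paper, and the route as sketched has a real gap at exactly the step you flag as ``the main obstacle.'' You propose to build, for each frequency profile $p$ on an $(n,\delta)$-separated alphabet $S_n$, a single periodic-orbit measure $\mu_p\in\mathcal{M}_f^{per}(X)$ approximating $\sum_s p_s e_n^f(s)$, and then to show that $\gamma$-apart profiles give $(n,\varepsilon)$-separated measures in $W_1^n$. But the supports of $\mu_p$ and $\mu_{p'}$ are long periodic orbits assembled from the \emph{same} alphabet: both orbits pass within $\delta/4$ of every block of $S_n$, so the two supports are not apart in any sense, and the separation can only come from a transport-cost lower bound exploiting the frequency difference. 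That lower bound is not automatic: only a $1/n$ fraction of the orbit points of $\mu_p$ sit at block-start positions, the remaining mass sits at interior offsets and inside specification gaps where $d_n$-closeness to any $f^js$ is not controlled, and a block $s$ may be $d_n$-close to a shifted copy $f^ks'$ of a different block. Your appeal to positive expansiveness ``resolving the menu measures'' does not address any of these; the heart of the theorem is left unproved.

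The paper sidesteps this entirely. It uses expansiveness plus Bowen specification (Lemma~\ref{sp+exp}) only to produce \emph{exponentially} many periodic points whose full orbits $\Pi_n$ are pairwise $(n+n_0,\rho)$-separated (expansiveness applied to distinct points on periodic orbits forces separation within one period), so the corresponding ergodic periodic measures are pairwise $(n+n_0,\rho)$-\emph{apart} in the strong sense that all points of one support are far from all points of the other. The doubly exponential count then comes from Lemma~\ref{ky} (the Hamming-ball/Bernstein argument on convex combinations), which requires convexity and is therefore applied to $\mathcal{M}_f(X)$, not to $\mathcal{M}_f^{per}(X)$; the conclusion is transferred back to $\mathcal{M}_f^{per}(X)$ via Sigmund's theorem $\overline{\mathcal{M}_f^{per}(X)}=\mathcal{M}_f(X)$ and the closure-invariance of the entropy order (Proposition~\ref{31}(3)). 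Note that $\mathcal{M}_f^{per}(X)$ is not convex, which is precisely why the paper takes this detour and why your attempt to manufacture the doubly exponential family directly inside $\mathcal{M}_f^{per}(X)$ forces you into the hard separation estimate. If you want to salvage your approach you must either prove that transport lower bound for the $\mu_p$ (handling offsets, gaps, and shifted block coincidences), or simply adopt the paper's reduction: apart ergodic measures $+$ convexity lemma on $\mathcal{M}_f(X)$ $+$ density of periodic measures.
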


\begin{thm}\label{Ex}
	Let $(X,d)$ be a compact metric space and $f:X\to X$ be a positively expansive map with specification property. Then $$h_{top}(f)=\mathcal{E}(f_{\mathcal{K}},\mathcal{K}_f(X)).$$
\end{thm}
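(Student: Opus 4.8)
The plan is to bracket $\mathcal{E}(f_{\mathcal{K}},\mathcal{K}_f(X))$ between $h_{top}(f)$ from both sides, with the upper bound essentially free and all the work concentrated in the lower bound. For the upper bound, since $\mathcal{K}_f(X)\subseteq\mathcal{K}(X)$ and the separation (or covering) numbers that define the entropy order are monotone under passing to a subset, one gets $\mathcal{E}(f_{\mathcal{K}},\mathcal{K}_f(X))\le\mathcal{E}(f_{\mathcal{K}})$, and the latter equals $h_{top}(f)$ by Theorem~\ref{K}. So it remains to produce, for every small scale $\varepsilon$ and every large $n$, enough pairwise dynamically separated elements of $\mathcal{K}_f(X)$ to force the reverse inequality.

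For the lower bound I would exploit specification to manufacture a doubly exponential family of invariant closed sets out of periodic orbits. First fix $\varepsilon$ below the expansive constant and, for large $n$, choose a maximal $(n,3\varepsilon)$-separated set $E\subseteq X$; positive expansiveness ensures $\tfrac1n\log\#E\to h_{top}(f)$ for all sufficiently small $\varepsilon$. Using specification, I would replace each $x\in E$ by a periodic point $p_x$ whose orbit $\gamma_x$ shadows the segment $x,fx,\dots,f^{n-1}x$ to within $\varepsilon$, all periods being equal to a common $N=n+m(\varepsilon)$. Each finite union $B_S:=\bigcup_{x\in S}\gamma_x$, for $\emptyset\ne S\subseteq E$, is then a finite, hence compact, $f$-invariant set, so $B_S\in\mathcal{K}_f(X)$; this produces $2^{\#E}$ candidate invariant sets, a number of doubly exponential size $\exp\big(e^{n(h_{top}(f)-o(1))}\big)$.

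The remaining point, which I expect to be the crux, is to check that distinct $S$ produce $(n,\varepsilon)$-separated sets in the Bowen--Hausdorff metric used by the entropy order: if $x\in S\setminus S'$ then the witness point $p_x\in B_S$ must stay $\varepsilon$-far, in the metric $d_n(\cdot,\cdot)=\max_{0\le k<n}d(f^k\cdot,f^k\cdot)$, from \emph{every} point of $B_{S'}$. The subtlety is that shifts $f^jp_y$ of the other orbits could accidentally shadow $x$'s block; ruling this out is exactly where I would work hardest, inserting a recognizable gap pattern of length $m$ between the shadowed block and its repetitions (a marker/comma-free coding) so that $x$'s block can be located unambiguously inside $\gamma_x$ and cannot occur as a length-$n$ window of any $\gamma_y$ with $y\ne x$. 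This marker costs only a sub-exponential factor in $\#E$, which is invisible after the $\log\log$.

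Granting the separation, the number of $(n,\varepsilon)$-separated elements of $\mathcal{K}_f(X)$ is at least $2^{\#E}$, so $\tfrac1n\log\log\big(2^{\#E}\big)=\tfrac1n\big(\log\#E+\log\log2\big)\to h_{top}(f)$; letting $\varepsilon\to0$ gives $\mathcal{E}(f_{\mathcal{K}},\mathcal{K}_f(X))\ge h_{top}(f)$, and combined with the upper bound the theorem follows. This mirrors the measure-theoretic Theorem~\ref{expansive}, where the same specification scheme realizes doubly exponentially many periodic-orbit measures separated in a Bowen--Wasserstein metric; the hyperspace version should be cleaner, since the union operation lets a mere \emph{subset} of orbits, rather than a choice of weights, index the family.
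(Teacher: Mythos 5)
Your overall architecture coincides with the paper's: upper bound by monotonicity plus Theorem \ref{K}, lower bound by using specification to replace an $(n,\varepsilon)$-separated set by periodic points of a common period $n+n_0$ and then taking unions of their orbits indexed by subsets. But the step you yourself single out as the crux --- that $p_x$ stays $d_n$-far from \emph{every} point of $B_{S'}$ --- is precisely the step you do not prove, and the marker/comma-free-coding device you propose for it is both unexecuted and the wrong tool: outside the symbolic setting there is no obvious meaning to ``inserting a recognizable gap pattern,'' and even for subshifts this would be a substantial argument. The paper sidesteps the difficulty by not demanding separation at scale $\varepsilon$ within time $n$. It uses positive expansiveness instead: every point produced lies on a periodic orbit of period dividing $n+n_0$, so for two distinct such points expansiveness gives $d(f^ku,f^kv)>\rho$ for some $k\ge 0$, and periodicity lets one take $0\le k<n+n_0$; hence distinct orbits are pairwise $(n+n_0,\rho)$-split, where $\rho$ is the expansive constant. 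Separation at the fixed scale $\rho$ and in time $n+n_0$ is all that is needed, because the split-counting function is monotone in the scale and the outer limit $\varepsilon\to0$ only improves the bound. The paper packages this as a lemma: for any $\mathcal{A}\subset\mathcal{K}(X)$ closed under finite unions, $\mathcal{E}(f_{\mathcal{K}},\mathcal{A})\ge\lim_{\varepsilon\to0}\limsup_{n\to\infty}\frac{1}{n}\log B(\mathcal{A},n,\varepsilon)$, with $B(\mathcal{A},n,\varepsilon)$ the maximal number of pairwise $(n,\varepsilon)$-split elements, and then feeds in the periodic-orbit count exactly as in the proof of Theorem \ref{expansive}.

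Two further points you should repair. First, distinct points of your separated set $E$ may be shadowed by periodic points lying on the \emph{same} orbit, so the number of distinct orbits (hence of genuinely distinct sets $B_S$) is only about $\#E/(n+n_0)$, not $\#E$; the paper divides by the period, and the loss is invisible after the $\log\log$. Second, your remark that the hyperspace case lets one use all $2^{\#E}$ subsets rather than a Hamming-separated subfamily is correct in spirit --- once the orbits are pairwise split, any nonempty symmetric difference already forces $H^n$-separation --- though the paper simply reuses the balanced-function family $F'$ from Lemma \ref{ky}. Neither of these replaces the missing separation argument, which is the genuine gap.
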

In addition, we define the entropy order for invariant measures $\mathcal{E}_\mu$, and establish the following variational principle.
\begin{thm}\label{variation}
	Let $(X,d)$ be a compact metric space and $f:X\to X$ a continuous map. Then
	$$\mathcal{E}(f_{\mathcal{M}},\mathcal{M}_f^{erg}(X))=\max\{\mathcal{E}_\mu:\mu\in\mathcal{M}_f(X) \}.$$
\end{thm}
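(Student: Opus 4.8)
The plan is to deduce the identity from the two inequalities separately, treating $\mathcal{E}_\mu$ as a Katok-type entropy order attached, through the ergodic decomposition, to the invariant measure $\mu$. Recall that every $\mu\in\mathcal{M}_f(X)$ is the barycenter $\mu=\int\nu\,\mathrm{d}\tau_\mu(\nu)$ of a unique Borel probability measure $\tau_\mu$ carried by the fixed-point set $\mathcal{M}_f^{erg}(X)$ of $f_{\mathcal{M}}$, and that $\mathcal{E}_\mu$ measures the double-exponential rate at which the mass of $\tau_\mu$ is spread, in the Bowen--Wasserstein metric, across $\mathcal{M}_f^{erg}(X)$. In particular, if $\mu$ is ergodic then $\tau_\mu$ is a single atom and $\mathcal{E}_\mu=0$; so, in sharp contrast with the classical variational principle, the maximiser cannot be ergodic and will be a measure whose decomposition fills out $\mathcal{M}_f^{erg}(X)$.

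For the inequality $\mathcal{E}_\mu\le\mathcal{E}(f_{\mathcal{M}},\mathcal{M}_f^{erg}(X))$ I would argue by monotonicity of covering numbers. Fix $\mu$; for any scale $\delta$, tolerance $\gamma$ and time $n$, a family of $W_n$-balls of radius $\delta$ covering a subset of $\tau_\mu$-measure at least $1-\gamma$ is a subfamily of some cover of all of $\mathcal{M}_f^{erg}(X)$, so the $\mu$-weighted covering number never exceeds the unweighted covering number of $\mathcal{M}_f^{erg}(X)$. Passing to the iterated double-logarithmic rate and then letting $\gamma\to0$ and $\delta\to0$ yields the bound, whence the supremum over $\mu\in\mathcal{M}_f(X)$ is at most $\mathcal{E}(f_{\mathcal{M}},\mathcal{M}_f^{erg}(X))$.

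The content of the theorem is the reverse inequality together with attainment of the maximum, which I would obtain by exhibiting a single maximiser $\mu^{\ast}$. The natural first candidate is a measure whose ergodic decomposition has full support: fixing a countable dense set $\{\nu_i\}\subset\mathcal{M}_f^{erg}(X)$ and weights $c_i>0$ with $\sum_i c_i=1$, put $\tau:=\sum_i c_i\delta_{\nu_i}$ and $\mu^{\ast}:=\sum_i c_i\nu_i$. Then $\tau$ is carried by ergodic measures, $\mu^{\ast}$ is invariant with $\tau_{\mu^{\ast}}=\tau$, and $\overline{\mathrm{supp}\,\tau}=\overline{\mathcal{M}_f^{erg}(X)}$. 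Since covering numbers are insensitive to taking closures, it remains to show that this full support forces $\mathcal{E}_{\mu^{\ast}}\ge\mathcal{E}(f_{\mathcal{M}},\mathcal{M}_f^{erg}(X))$, after which the two inequalities give equality with the maximum realised at $\mu^{\ast}$.

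The main obstacle is exactly the mass--tolerance interaction hidden in this last step: because $\mathcal{E}_{\mu^{\ast}}$ only sees subsets of $\tau$-measure at least $1-\gamma$, a crude choice of the weights $c_i$ could let the $\gamma$-discard delete precisely the regions carrying the double-exponential covering complexity, collapsing the rate. This is the precise analogue of the step in Katok's entropy formula where one rules out concentration of entropy on small-measure sets. I would resolve it by replacing the naive weights with a hierarchical choice of $\tau$ that, at every scale $\delta$ and along a sequence of times $n\to\infty$ realising $\mathcal{E}(f_{\mathcal{M}},\mathcal{M}_f^{erg}(X))$, distributes comparable mass over a near-maximal $(n,\delta)$-separated subset of $\mathcal{M}_f^{erg}(X)$; then any subset of $\tau$-measure $1-\gamma$ still retains at least a $(1-\gamma)$-proportion of such a separated set, whose $\delta$-covering number, and hence the double-logarithmic rate, is unaffected as $\gamma\to0$ and $\delta\to0$. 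Carrying out this construction so that the spreading is simultaneously uniform across all scales and a full-density set of times, while keeping $\tau$ supported on genuinely ergodic measures, is the technical heart of the proof; once it is in place the variational principle follows.
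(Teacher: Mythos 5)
Your overall plan coincides with the paper's: the inequality $\mathcal{E}_\mu\le\mathcal{E}(f_{\mathcal{M}},\mathcal{M}_f^{erg}(X))$ follows from monotonicity (this is Proposition 3.4, obtained by pushing $\mu$ forward under $x\mapsto e^f(x)$ and transporting onto an $(n,\varepsilon)$-spanning set of $\mathcal{M}_f^{erg}(X)$), and the whole content is to exhibit one invariant measure whose ergodic decomposition spreads mass over near-maximal $(n,4\varepsilon)$-separated subsets $\mathcal{F}_n\subset\mathcal{M}_f^{erg}(X)$; the paper takes $\omega=\sum_n2^{-n}\omega_n$ with $\omega_n$ uniform on $\mathcal{F}_n$ and transfers the single-scale lower bound $Q(\omega_n,n,\varepsilon)\ge\lceil c_n/2\rceil$ to $\omega$ via two quantization lemmas quoted from Berger--Bochi. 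One small correction first: $\mathcal{E}_\mu(n,\varepsilon)$ is defined by the $L^1$ condition $\int W_1^n(e^f(x),\mathcal{F})\,\mathrm{d}\mu\le\varepsilon$, not by covering a set of $\tau_\mu$-measure $1-\gamma$; the two are comparable by Markov's inequality and this does not affect the easy direction, but your entire discussion of the ``mass--tolerance interaction'' is phrased for a quantity the paper does not use.

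The genuine gap is in the lower bound, and it is not merely that the ``technical heart'' is deferred: the candidate maximiser you propose cannot work for \emph{any} choice of weights. If $\tau=\sum_ic_i\delta_{\nu_i}$ has countable support and $\sum_ic_i=1$, let $\mathcal{F}_N$ be the $N$ heaviest atoms. Since $d_n\le\mathrm{diam}(X,d)$ and hence $W_1^n\le\mathrm{diam}(X,d)$ for every $n$, one gets
\begin{equation*}
\int W_1^n\bigl(e^f(x),\mathcal{F}_N\bigr)\,\mathrm{d}\mu^{\ast}(x)\;\le\;\mathrm{diam}(X)\sum_{i>N}c_i ,
\end{equation*}
so $\mathcal{E}_{\mu^{\ast}}(n,\varepsilon)$ is bounded, uniformly in $n$, by the first $N$ with $\sum_{i>N}c_i\le\varepsilon/\mathrm{diam}(X)$, whence $\mathcal{E}_{\mu^{\ast}}=0$. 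The same computation defeats the Katok-type variant (the $N$ heaviest atoms already carry mass $1-\gamma$), and no ``hierarchical'' reweighting of a fixed countable family escapes it, because the obstruction is summability of the weights rather than their particular values: for fixed $\varepsilon$ (or $\gamma$) the tail beyond some finite $N(\varepsilon)$ is invisible, and that tail is precisely where the double-exponential complexity at large times would have to live. Concretely, your assertion that a subset of $\tau$-measure $1-\gamma$ retains a $(1-\gamma)$-proportion of each separated set is false: the discarded set may contain the entire supports of all $\omega_n$ with $n\ge n_0(\gamma)$, which are the only ones relevant to $\limsup_{n\to\infty}$. A maximiser, if it exists, must have a non-atomic ergodic decomposition satisfying a Frostman-type estimate at all time-scales simultaneously (every $W_1^n$-ball of radius $2\varepsilon$ of $\omega$-measure at most, say, $e^{-e^{n(h-\delta)}}$), and constructing such an $\omega$ inside $\mathcal{M}(\mathcal{M}_f^{erg}(X))$ is the actual content of the theorem; it is absent from your proposal. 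You should also be aware that the paper's own transfer step, the cited implication $\rho\gg\rho_1\Rightarrow Q(\rho,n,\varepsilon)\ge Q(\rho_1,n,\varepsilon)$, is exactly where this difficulty is concentrated: absolute continuity without control of the Radon--Nikodym derivative does not circumvent the computation above, so simply mimicking that citation would not close your gap.
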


Finally, we consider the Theorem \ref{mxcase} in the case of hyperspace. Let $(X,f)$ be a topological dynamical system, denote by $N^f(\varepsilon)$ the minimum cardinality of $\varepsilon$-dense set of $\mathcal{K}_f(X)$.
\begin{thm}\label{conformal}
	Let $f$ be $C^{1+\alpha}$-mapping of manifold which admits a basic hyperbolic set $\Lambda$ with box dimension $b$. Assume that $f$ is conformal expanding, then 
	$$b=\lim_{\varepsilon\to0}\frac{\log\log N^{f|\Lambda}(\varepsilon)}{-\log\varepsilon}.$$
\end{thm}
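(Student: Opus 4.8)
The plan is to prove the two inequalities
$\limsup_{\varepsilon\to 0}\frac{\log\log N^{f|\Lambda}(\varepsilon)}{-\log\varepsilon}\le b$ and $\liminf_{\varepsilon\to 0}\frac{\log\log N^{f|\Lambda}(\varepsilon)}{-\log\varepsilon}\ge b$ separately. The upper bound is soft and uses only the box dimension of $\Lambda$, whereas the lower bound carries all the dynamical content and will be obtained by coding a doubly-exponential family of pairwise Hausdorff-separated invariant closed subsets. Thus the heart of the argument is a packing estimate in $\mathcal{K}_f(\Lambda)$.

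For the upper bound, fix a scale $\varepsilon$ and let $\{x_1,\dots,x_M\}$ be a maximal $\varepsilon$-separated subset of $\Lambda$, so that $M=M(\varepsilon)$ satisfies $\log M/(-\log\varepsilon)\to b$. For $B\in\mathcal{K}_f(\Lambda)$ set $S(B)=\{i: d(x_i,B)\le\varepsilon\}$; since the net is also $\varepsilon$-dense, the finite set $\{x_i: i\in S(B)\}$ lies within Hausdorff distance $\varepsilon$ of $B$. Hence the $2^M$ subsets of the net already $\varepsilon$-approximate every member of $\mathcal{K}_f(\Lambda)$, and replacing each occupied ball by an invariant representative, at the cost of doubling the radius, gives $N^{f|\Lambda}(2\varepsilon)\le 2^{M(\varepsilon)}$. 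Taking $\log\log$ yields $\log\log N^{f|\Lambda}(2\varepsilon)\le \log M(\varepsilon)+O(1)$, and dividing by $-\log\varepsilon$ gives $\limsup\le b$.

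For the lower bound I would pass to the symbolic model. Since $\Lambda$ is a basic set of a $C^{1+\alpha}$ conformal expanding map, a Markov partition yields a finite-to-one coding $\pi:\Sigma_A\to\Lambda$ by a mixing subshift of finite type, and bounded distortion gives $\operatorname{diam}\pi[\omega_0\cdots\omega_{n-1}]\asymp\exp(-S_n\varphi(\omega))$ with $\varphi=\log\|Df\|$. By Bowen's formula $b$ is the root of $P(-b\varphi)=0$, so the number $A(\varepsilon)$ of cylinders of diameter roughly $\varepsilon$ satisfies $\log A(\varepsilon)/(-\log\varepsilon)\to b$, i.e. $A(\varepsilon)=\varepsilon^{-b+o(1)}$. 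I would then fix an admissible, self-non-overlapping marker word $\mathfrak{B}$ of length $o(n)$, available by mixing, and for each of these $A(\varepsilon)$ admissible addresses $w$ form the periodic point with code $(\mathfrak{B}w)^\infty$ and its orbit $O_w\subset\Lambda$. The non-overlapping property of $\mathfrak{B}$ forces the marker to be read in a unique phase, so the point $q_w\in O_w$ aligned with the marker disagrees with every point of $O_{w'}$, for $w'\ne w$, within the first $O(n)$ symbols; thus $d(q_w,O_{w'})\ge c\varepsilon$.

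Finally, to each subset $S$ of the address collection I associate the finite invariant set $K_S=\bigcup_{w\in S}O_w\in\mathcal{K}_f(\Lambda)$. If $S\ne S'$ and $w\in S\setminus S'$, the private point $q_w\in K_S$ has distance $\ge c\varepsilon$ from $K_{S'}$, so the family $\{K_S\}$ is $c\varepsilon$-separated and has cardinality $2^{A(\varepsilon)}$; since a separated family lower-bounds the covering number, $N^{f|\Lambda}(c\varepsilon/2)\ge 2^{A(\varepsilon)}=2^{\varepsilon^{-b+o(1)}}$ and $\liminf\ge b$, completing the proof. The main obstacle is the sharp-constant bookkeeping in the non-uniformly expanding regime: one must arrange the marker and any filler to have length $o(n)$ and invoke bounded distortion so that the address count is $\varepsilon^{-b(1+o(1))}$ and the separation occurs at scale $\varepsilon^{1+o(1)}$ simultaneously; otherwise the overhead would replace the exponent $b$ by a strictly smaller multiple. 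The soft upper bound and the combinatorics of the $K_S$ are routine, and all the genuine work sits in this coding estimate, which rests on exactly the thermodynamic input (Bowen's formula) underlying Theorem \ref{mxcase}.
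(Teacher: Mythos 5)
Your overall architecture coincides with the paper's: the upper bound is the soft estimate $N^{f|\Lambda}(\varepsilon)\le 2^{N(\Lambda,\varepsilon)}$ obtained from subsets of an $\varepsilon$-net (Proposition \ref{dimmo}), and the lower bound is a doubly exponential packing of $\mathcal{K}_{f|\Lambda}(\Lambda)$ by unions $K_S=\bigcup_{w\in S}O_w$ of roughly $\varepsilon^{-b+o(1)}$ pairwise $\varepsilon$-split periodic orbits (Lemma \ref{key2} applied to the family $\mathcal{B}_n$). The only substantive difference is how the split periodic orbits are produced: the paper takes an ergodic measure of maximal dimension with $b=h_\mu/\chi_\mu$ and imports from Berger--Bochi \cite{Berger2019} a set $G_n$ of $e^{(h_\mu-\delta)n}$ periodic points of period $n+n_0$ whose full orbits are $(d,\varepsilon_n)$-separated at scale $\varepsilon_n\asymp e^{-(\chi_\mu+3\delta)n}$, whereas you build them by hand in the symbolic model with a marker word and Bowen's formula.

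It is in that hand-built step that there is a genuine gap. From ``the codes of $q_w$ and of every point of $O_{w'}$ disagree within the first $O(n)$ symbols'' you conclude $d(q_w,O_{w'})\ge c\varepsilon$. This implication fails as stated: distinct rectangles of a Markov partition may share boundary points, so two points whose itineraries differ at position $k$ can be arbitrarily close (for the doubling map with partition $\{[0,1/2),[1/2,1)\}$, the points $1/2\pm\delta$ have itineraries differing already at position $0$). Even after pruning boundary effects, a disagreement at position $k$ only yields $d\gtrsim \gamma\,\|Df^k\|^{-1}$ for the gap $\gamma$ between rectangle cores, and making this $\varepsilon^{1+o(1)}$ uniformly over all $k\le O(n)$ and all competing phases is exactly the content you defer as ``bookkeeping''; moreover, self-non-overlap of $\mathfrak{B}$ does not exclude occurrences of $\mathfrak{B}$ inside $w'$, so the ``unique phase'' claim also requires thinning the address set. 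None of this is unrepairable---it is the standard construction---but as written the key separation inequality is unjustified, while the paper discharges it by citation. (Two minor points: your ``invariant representative'' step in the upper bound is unnecessary, since the $\varepsilon$-dense set in the definition of $N^{f}(\varepsilon)$ need not consist of invariant sets; and you are right that all $2^{A(\varepsilon)}$ subsets $S$ work for the Hausdorff packing, so the Hamming-code thinning that the paper's Lemma \ref{key2} inherits from Lemma \ref{ky} is not actually needed in the hyperspace setting.)
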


Our results give some new points to comprehensive the connection of topological dynamical systems and their induced systems. This paper is organized as follows. In Section 2, we list basic notions and results, two kinds of dynamical metrics on induced spaces. In Section 3, we give the definitions of dynamical emergences and entropy orders. In Section 4, we prove our main results. In Section 5, we give a short argument on pointwise emergence introduced in {\rm\cite{Kiriki2019}}.

\section{Preliminaries}
In this section, we recall some basic definitions and give some properties of induced transformations $f_\mathcal{M}$ and $f_\mathcal{K}$. We shall denote by $\mathbb{N}$ the set of all positive integers, $\mathbb{N}_0$ the set of all non-negative integers, and $\mathbb{R}$ the set of all real numbers.

\subsection{Positively expansiveness and specification property}
Let $(X,f)$ be a topological dynamical system with a metric $d$ on $X$. Then $f$ is said to satisfy the Bowen specification property if for any $\varepsilon>0$, there exists an $n_0\in\mathbb{N}$ such that given any $x_1,\ldots,x_k\in X$, any nonnegative integers $n_1,\ldots,n_k$, $s_1,\ldots,s_k$ with $s_i\geq n_0$, there exists a point $p\in X$ such that, writing $m_j=\sum_{i=1}^{j}n_i+s_i$ for $j=1,\ldots,k$ and $m_0=0$,\\
(1) $d(f^{m_{j-1}+i}p,f^ix_j)<\varepsilon$ for $0\leq i<n_j$
and $1\leq j\leq k$;\\
(2) $f^{m_k}(p)=p$.\\
If the periodicity condition $(2)$ is omitted, we say that $f$ has the specification property.

A dynamical system $(X,f)$ is called positively expansive if there exists a $\rho>0$ such that for distinct $x,y\in X$, there exists an $n\in\mathbb{N}_0$ such that
$d(f^nx,f^ny)>\rho$. Call $\rho$ the expansive constant for $f$. 

The following important fact arises from the definition of expansiveness, see \cite{Kwietniak2016, Kwietniak2012} for explicit statement and proof.
\begin{lem}\label{sp+exp}
	If $(X,f)$ has the specification property and positively expansiveness, then $(X,f)$ has the Bowen specification property.
\end{lem}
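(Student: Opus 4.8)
The plan is to obtain the Bowen (periodic) specification property from the ordinary specification property by using positive expansiveness to close up an almost-periodic orbit into a genuinely periodic one. The one analytic input I would isolate first is the standard \emph{uniform expansiveness} consequence of positive expansiveness: for every $\eta>0$ there is an integer $N=N(\eta)$ such that, for all $x,y\in X$,
\[
d(f^ix,f^iy)\le\rho \ \text{ for every } 0\le i\le N \quad\Longrightarrow\quad d(x,y)<\eta .
\]
I would prove this by contradiction and compactness. If it failed, there would be $\eta_0>0$ and sequences $x_m,y_m$ with $d(f^ix_m,f^iy_m)\le\rho$ for $0\le i\le m$ but $d(x_m,y_m)\ge\eta_0$; passing to convergent subsequences $x_m\to x$ and $y_m\to y$ gives $x\ne y$ yet $d(f^ix,f^iy)\le\rho$ for all $i\in\mathbb{N}_0$, contradicting positive expansiveness with constant $\rho$.

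Given $\varepsilon>0$ I may assume $\varepsilon<\rho$, since the assertion for smaller $\varepsilon$ is stronger, and set $\varepsilon'=\varepsilon/2$. Let $n_0$ be the specification constant attached to $\varepsilon'$, enlarged if necessary so that $n_0\ge N(\varepsilon')$. Fix data $x_1,\dots,x_k$, window lengths $n_1,\dots,n_k$ and spacers $s_1,\dots,s_k\ge n_0$, and put $m=m_k=\sum_{i=1}^k(n_i+s_i)$. The key construction is to apply the ordinary specification property not to one copy of this configuration but to $L$ successive copies (to the $Lk$ points $x_1,\dots,x_k,x_1,\dots,x_k,\dots$ with the same window lengths and spacers, the spacer across two consecutive copies being $s_k\ge n_0$). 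This yields a point $p_L$ whose orbit $\varepsilon'$-traces, on every window of the first $L$ periods, the corresponding orbit segment of the $x_j$. Passing to a limit $p_{L_j}\to p$ by compactness, $p$ $\varepsilon'$-traces the \emph{infinite} periodic repetition of the configuration on all of its windows; in particular $p$ already satisfies condition $(1)$ of the Bowen specification with constant $\varepsilon'<\varepsilon$.

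It remains to upgrade $p$ to a point with $f^{m}(p)=p$, and this is where positive expansiveness does the work and where the only genuine difficulty lies. Since the traced target is periodic of period $m$, the shifted point $f^{m}(p)$ $\varepsilon'$-traces exactly the same infinite target, so at every traced time $t$ one has $d(f^tp,f^t(f^mp))<2\varepsilon'=\varepsilon<\rho$. The plan is then to argue that the point $\varepsilon'$-tracing a prescribed periodic target is unique, whence $f^{m}(p)=p$ and the periodic point satisfies both $(1)$ and $(2)$. The main obstacle to making this rigorous is precisely the closing step: the specification property leaves the orbit unconstrained on the spacer gaps, so a priori $p$ and $f^m(p)$ need not stay within $\rho$ at gap times, and the uniform expansiveness estimate cannot be invoked naively. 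The heart of the matter is therefore to show that positive expansiveness, together with the control available on the (long, since $s_i\ge n_0\ge N(\varepsilon')$) windows flanking each gap, still forces agreement of the two orbits at all times, equivalently that the set of points $\varepsilon'$-tracing a fixed periodic target reduces to a single $f^m$-invariant point. Once this uniqueness-of-tracing is established the periodicity condition $(2)$ is immediate, completing the proof; I would expect the references \cite{Kwietniak2016,Kwietniak2012} to carry out exactly this expansive closing argument in detail.
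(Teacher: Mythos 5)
Your preliminary steps are correct: the compactness proof of uniform expansiveness is fine, and so is the construction, via specification applied to $L$ concatenated copies of the configuration followed by a compactness limit, of a point $p$ that $\varepsilon'$-traces every window of the infinite $m$-periodic repetition. But the proposal stops exactly where the lemma begins. The closing step --- producing a point with $f^m(p)=p$ --- is the entire content of the statement, and you do not prove it: you reduce it to the claim that ``the set of points $\varepsilon'$-tracing a fixed periodic target reduces to a single $f^m$-invariant point'' and then defer that claim to the literature. Worse, that claim is false, for precisely the reason you yourself flag (specification leaves the orbit unconstrained on the gaps). Concretely, take the full shift on two symbols, which is positively expansive and has specification, with $x_1=000\cdots$, $n_1=10$, $s_1=20$, $m=30$, $\varepsilon'=2^{-15}$: a point $\varepsilon'$-traces the periodic target on all its windows if and only if it has the symbol $0$ in positions $[30l,30l+24)$ for every $l\geq 0$, while the six symbols in each block $[30l+24,30l+30)$ are completely free. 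This set of window-tracers is uncountable and contains many distinct $30$-periodic points, so it is nowhere near a single $f^{30}$-invariant point. No appeal to uniform expansiveness can repair this: expansiveness identifies two points only when their \emph{full} orbits stay $\rho$-close, and window-tracing gives no control at gap times.

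What is true, and what any correct argument must exploit, is that a point tracing an \emph{exactly periodic target at all times} within $\rho/2$ is unique, hence fixed by $f^m$ (compare $p$ with $f^m p$, both of which trace the same full-time target, and apply expansiveness). The genuine work is therefore to construct a tracer whose orbit is controlled at \emph{all} times, gaps included, not merely on the windows that specification hands you --- for instance by re-tracing the full orbit segment of a first tracer, whose gap behaviour then becomes part of the target, with uniform-continuity estimates across the bounded gaps. This is exactly what is carried out in \cite{Kwietniak2016,Kwietniak2012}; note that the paper itself gives no proof of this lemma but quotes it from those references. As it stands, your proposal establishes the easy half, correctly diagnoses the obstruction, proposes a false statement as the missing step, and leaves the decisive argument as a citation; it cannot be accepted as a proof of the lemma.
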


\subsection{The space of measures}
Let $(X,d)$ be a compact metric space. It is well known that $\mathcal{M}(X)$ is compact with the weak*-topology. Denote the support set of a measure $\mu$ by $\text{supp}\mu$. We will next recall the $p$-Wasserstein metric, $1\leq p<\infty$, on $\mathcal{M}(X)$. Given $\mu,\nu\in\mathcal{M}(X)$, a transport plan from $\mu$ to $\nu$ is a probability measure $\pi$ on the product $X\times X$ such that $(p_1)_{\ast}\pi=\mu$ and $(p_2)_{\ast}\pi=\nu$, where $p_1, p_2: X\times X\to X$ are the canonical projections, and $(p_i)_\ast\pi=\pi\circ p_i^{-1}$ for $i=1,2$. Let $\Pi(\mu,\nu)$ denote the set of all transport plans from $\mu$ to $\nu$. Then for $1\leq p<\infty$, the $p$-Wasserstein distance between $\mu$ and $\nu$ is defined as: $$W_p(\mu,\nu):=\left( \inf_{\pi\in\Pi(\mu,\nu)}\int [d(x,y)]^p\,\mathrm{d}\pi(x,y) \right)^{1/p}.$$
The integral in the above formula is called the cost of the transport plan $\pi$, and the infimum is always attained. It is a metric and the induced topology is just the weak*-topology on $\mathcal{M}(X)$ (see \cite[Theorem 7.3 and Theorem 7.12]{Villani2003}).

Another metric on $\mathcal{M}(X)$ we consider is the L\'{e}vy--Prokhorov metric. For $\mu,\nu\in\mathcal{M}(X)$, $LP(\mu,\nu)$ is defined as the infimum of $\varepsilon>0$ such that for every Borel set $B\subset X$,
$$\mu(B)\leq \nu(B^{\varepsilon})+\varepsilon\ \text{and}\ \nu(B)\leq \mu(B^{\varepsilon})+\varepsilon,$$ where $B^{\varepsilon}$ denotes the $\varepsilon$-neighbourhood of $B$. The L\'{e}vy--Prokhorov metric can also be defined in terms of transport plans: it is the infimum of $\varepsilon>0$ such that there exists a $\pi\in\Pi(\mu,\nu)$ with the set $\{(x,y)\in X\times X:d(x,y)>\varepsilon \}$ having $\pi$-measure less than $\varepsilon$. The induced topology is also the weak*-topology for measures (see \cite[p. 74]{Villani2003}).

Note that both the Wasserstein metric and the L\'{e}vy--Prokhorov metric of $\mathcal{M}(X)$ depend on the metric of $X$. Moreover, the map $x\mapsto\delta_x$ makes $X$ embed isometrically into $\mathcal{M}(X)$.
The $p$-Wasserstein metric and the L\'{e}vy--Prokhorov metric have the following H\"{o}lder comparisons: for $1\leq q\leq p<\infty$,
\begin{align}\label{eqaivalent}
	&W_q\leq W_p\leq({\rm diam } ~X)^{1-\frac{q}{p}}W_q^{\frac{q}{p}}.\\
	&LP^{1+\frac{1}{p}}\leq W_p\leq (1+({\rm diam }~ X)^p)^{\frac{1}{p}}LP^{\frac{1}{p}},
\end{align}
see \cite[(7.4)]{Villani2003}, \cite[Theorem 2]{Gibbs2002}. Sometimes we will also need the following metric on $\mathcal{M}(X)$, which also induces the weak*-topology. Let $C(X,[0,1])$ denote the set of continuous maps on $X$ with values in $[0,1]$, and $\{\varphi_n \}_{n\in\mathbb{N}}\subset C(X,[0,1])$ be a countable dense subset. Then
$$D(\mu,\nu):=\sum_{n=1}^{\infty}\frac{|\int\varphi_n \,\mathrm{d}\mu-\int\varphi_n\,\mathrm{d}\nu|}{2^n}.$$ 

\subsection{Box dimension and metric order}
Let $X$ be a metric space. For $\varepsilon>0$ and an non-empty subset $A\subset X$, call a subset $E\subset X$ is $\varepsilon$-dense set of $A$ if $A\subset \cup_{x\in E}B(x,\varepsilon)$, call a subset $F\subset A$ is $\varepsilon$-separated of $A$ if for distinct $x,y\in F$ we have $d(x,y)>\varepsilon$. Denote by $N(A,\varepsilon)$, $S(A,\varepsilon)$ the smallest cardinality of $\varepsilon$-dense sets of $A$, the largest cardinality of $\varepsilon$-separated sets of $A$ respectively. It is easy to check that 
$$S(A,2\varepsilon)\leq N(A,\varepsilon)\leq S(A,\varepsilon).$$
The upper box dimension of $A$ is defined by 
$$\overline{\text{dim}}(A)=\limsup_{\varepsilon\to0}\frac{\log N(A,\varepsilon)}{-\log\varepsilon}=\limsup_{\varepsilon\to0}\frac{\log S(A,\varepsilon)}{-\log\varepsilon}.$$
The lower box dimension $\underline{\text{dim}}(A)$ is defined by taking liminf instead of limsup. The notion $\overline{\text{dim}}(A,d)$, $\underline{\text{dim}}(A,d)$ indicates dependence on metric $d$. If these two quantities coincide, call they box dimension of $A$ and denoted by $\text{dim}(A)$.

To deal with the case of box dimension being infinite, we follow the concept in \cite{Kolmogorov1976}. Let 
$$\text{mo}(A)=\lim_{\varepsilon\to0}\frac{\log\log N(A,\varepsilon)}{-\log\varepsilon}=\lim_{\varepsilon\to0}\frac{\log\log S(A,\varepsilon)}{-\log\varepsilon},$$
if the limit exists, and call it metric order of $A$. The lower and upper metric orders $\underline{\text{mo}}(A)$, $\overline{\text{mo}}(A)$ are defined by taking liminf and limsup. The following theorem relates the box dimension of underlying metric space and metric order of the space of probability measures. From this point, the induced space is more ``huge" than original space.

\begin{thm}{\rm\cite{Berger2019, Bolley2007, Kloeckner2012}}
	If $(X,d)$ is a compact space with well defined box dimension, then for any $p\geq1$, ${\rm dim}(X,d)={\rm mo}(\mathcal{M}(X),W_p).$
\end{thm}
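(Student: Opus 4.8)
The plan is to estimate the covering and separation numbers of $(\mathcal{M}(X),W_p)$ directly in terms of those of $(X,d)$, exploiting that the metric order carries a \emph{double} logarithm. Writing $d:=\dim(X,d)$, the hypothesis gives $N(X,\varepsilon)=S(X,\varepsilon)=\varepsilon^{-d+o(1)}$ as $\varepsilon\to0$. Because $\mathcal{M}(X)$ is exponentially ``larger'' than $X$, one has $\log N(\mathcal{M}(X),W_p,\varepsilon)\asymp N(X,\varepsilon)$ up to a harmless polynomial-in-$1/\varepsilon$ factor, and the outer logarithm in $\mathrm{mo}$ then converts the exponent $d$ of $N(X,\varepsilon)$ into the exponent of $\mathrm{mo}(\mathcal{M}(X),W_p)$. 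The whole argument rests on two elementary Kantorovich-type estimates for finitely supported measures, which I will carry out for a general fixed $p\ge1$, noting at the end that $p$ enters only through constants and a $|\log\varepsilon|$ factor that are annihilated by the double logarithm. In particular no reduction to a single $p$ is attempted: the H\"older comparison \eqref{eqaivalent} between different $W_p$ is too lossy to yield such a reduction cleanly.

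For the upper bound $\overline{\mathrm{mo}}(\mathcal{M}(X),W_p)\le d$, I would fix an $\varepsilon$-net $\{x_1,\dots,x_M\}$ of $X$ with $M=N(X,\varepsilon)$ and an associated Borel partition $\{P_i\}$ with $P_i\subseteq B(x_i,\varepsilon)$. The projection $\mu\mapsto\mu':=\sum_i\mu(P_i)\,\delta_{x_i}$ satisfies $W_p(\mu,\mu')\le\varepsilon$, since transporting the mass of each $P_i$ to $x_i$ moves it a distance $<\varepsilon$. It then suffices to cover the finitely supported measures $\{\sum_i a_i\delta_{x_i}:a\in\Delta^{M-1}\}$ over the probability simplex. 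For two such measures the ``keep the common mass fixed'' plan gives $W_p(\sum_i a_i\delta_{x_i},\sum_i b_i\delta_{x_i})^p\le(\dim X\text{'s diam})^p\cdot\frac12\|a-b\|_1$, so an $\ell^1$-net of $\Delta^{M-1}$ of radius $\sim(\varepsilon/\mathrm{diam}\,X)^p$ yields a $W_p$-net of scale $O(\varepsilon)$. Covering the $(M-1)$-dimensional simplex by $\ell^1$-balls of that radius needs at most $(C/\varepsilon^p)^M$ of them, whence $\log N(\mathcal{M}(X),W_p,O(\varepsilon))\le N(X,\varepsilon)\cdot(p\,|\log\varepsilon|+\log C)$. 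Taking two logarithms and dividing by $-\log\varepsilon$, the factor $p\,|\log\varepsilon|+\log C$ contributes only $\log|\log\varepsilon|$, which is negligible, while $\log N(X,\varepsilon)=(d+o(1))(-\log\varepsilon)$; this gives $\overline{\mathrm{mo}}\le d$.

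For the lower bound $\underline{\mathrm{mo}}(\mathcal{M}(X),W_p)\ge d$, I would start from an $\varepsilon$-separated set $\{x_1,\dots,x_M\}$ of $X$ with $M=S(X,\varepsilon)$. For measures supported on it, any transport plan must move off-diagonal mass at least $\|a-b\|_{TV}$, each unit costing more than $\varepsilon^p$, so $W_p(\sum_i a_i\delta_{x_i},\sum_i b_i\delta_{x_i})\ge\varepsilon\,\|a-b\|_{TV}^{1/p}$. Hence it is enough to produce many probability vectors on $\{x_1,\dots,x_M\}$ that are pairwise separated by a constant in total variation; a standard coding/counting argument (Gilbert--Varshamov, using uniform measures on balanced subsets) furnishes $\exp(cM)$ such vectors, pairwise $\tfrac12$-separated in TV, hence pairwise $2^{-1/p}\varepsilon$-separated in $W_p$. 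Therefore $\log S(\mathcal{M}(X),W_p,2^{-1/p}\varepsilon)\ge c\,S(X,\varepsilon)=\varepsilon^{-d+o(1)}$; taking two logarithms, dividing by $-\log\varepsilon$, and noting that the constant rescaling $2^{-1/p}$ of the scale and the constant $c$ disappear after the second logarithm, I obtain $\underline{\mathrm{mo}}\ge d$. Combining with the trivial $\underline{\mathrm{mo}}\le\overline{\mathrm{mo}}$ shows the metric order exists and equals $d$ for every $p\ge1$.

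The main obstacle, and the place where care is genuinely required, is the bookkeeping of scales in the two transport estimates: the covering side forces the simplex to be discretized to polynomial accuracy $\varepsilon^p$ (producing the spurious $|\log\varepsilon|$ factor), while the packing side needs only constant TV-separation, and one must check that these asymmetric accuracies nevertheless yield the same exponent after the outer logarithm. The conceptual point guiding the whole proof is precisely that $\log\!\big(\varepsilon^{-d+o(1)}\cdot|\log\varepsilon|\big)=(d+o(1))(-\log\varepsilon)$, so any subexponential (in $1/\varepsilon$) discrepancy between the covering and packing constructions --- including the entire dependence on $p$ --- is invisible to the metric order. The remaining routine verifications are the two inequalities above and the existence of an exponentially large constant-separated code in the simplex.
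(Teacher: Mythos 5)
Your proof is correct, but there is nothing in the paper to compare it against line by line: the paper states this theorem as a quoted result, citing \cite{Berger2019, Bolley2007, Kloeckner2012}, and never proves it. What your argument really does is reconstruct, in the static setting, exactly the two tools the paper imports for its own dynamical main theorems. Your upper bound (net in $X$, project each measure onto the net, then discretize the probability simplex in $\ell^1$ at accuracy $(\varepsilon/\mathrm{diam}\,X)^p$) is a self-contained derivation of the covering estimate $\mathcal{N}_p(X,\delta)\leq(8eD/\delta)^{pN(X,\delta/2)}$ that the paper quotes from \cite[Theorem A.1]{Bolley2007} at the start of Section 4.1; the polynomial-in-$1/\varepsilon$ number of simplex cells is precisely the base $(8eD/\delta)^p$ of that bound, and as you say it is annihilated by the outer logarithm. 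Your lower bound is the static version of the paper's Lemma \ref{ky}: an $\varepsilon$-separated set of points gives pairwise ``apart'' Dirac masses, the keep-the-diagonal transport bound $W_p\geq\varepsilon\,\|a-b\|_{TV}^{1/p}$ replaces the paper's estimate $\int d_n\,\mathrm{d}\pi\geq\varepsilon\mu_{\phi_1}(S_1\setminus S_2)$, and your Gilbert--Varshamov code of balanced subsets is exactly the set $F'$ that the paper builds with the Bernstein inequality (Lemma \ref{berstein}). So the route is the same as in the cited sources and in the paper's own proofs of Theorems \ref{mh} and \ref{K}, just written out rather than cited; the only thing you forgo is the reduction between different $p$'s, which your scale-uniform estimates make unnecessary.

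One constant should be repaired, though it costs nothing: exponentially many balanced probability vectors pairwise $\tfrac12$-separated in total variation do not exist, since $\tfrac12$ is the \emph{typical} TV distance between two random balanced subsets, so the counting argument collapses at that threshold. The code gives pairwise TV separation by some constant $c<\tfrac12$ (e.g.\ $\tfrac14$, via pairwise Hamming distance $\geq M/4$, exactly as in the paper's Lemma \ref{ky}). Since any fixed constant disappears under the double logarithm, this changes nothing downstream, but the claim as written is slightly too strong.
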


\subsection{The induced transformation $f_{\mathcal{M}}$}
Let $(X,d)$ be a compact metric space and $f:X\to X$ be a continuous map. The induced map on $\mathcal{M}(X)$ is defined by
\begin{align*}
f_{\mathcal{M}}:\mathcal{M}(X)&\to\mathcal{M}(X)\\
\mu&\mapsto f_{\ast}\mu,
\end{align*}
where $f_{\ast}\mu(B)=\mu(f^{-1}B)$ for every Borel set $B$. For $1\leq p<\infty$ and $n\in\mathbb{N}$, let $W_{p,n}$ denote the $n$th Bowen metric on $\mathcal{M}(X)$, i.e.,
$$W_{p,n}(\mu,\nu)=\max_{0\leq i\leq n-1}\left\{W_p\left(f_{\mathcal{M}}^i(\mu),f_{\mathcal{M}}^i(\nu) \right)\right\}.$$ In addition, let $W_{p}^{n}$ denote the $p$-Wasserstein metric defined by the $n$th Bowen metric on $X$, i.e.,
\begin{equation}\label{wn}
	W_p^n(\mu,\nu)=\left(\inf_{\pi\in\Pi(\mu,\nu)}\int [d_n(x,y)]^p\,\mathrm{d}\pi(x,y) \right)^{1/p},
\end{equation}
where $d_n(x,y)=\max\left\{d(f^ix,f^iy): 0\leq i\leq n-1 \right\}$.  Similarly, let $LP_n$ and $LP^n$ denote the $n$th L\'{e}vy--Prokhorov metric and the L\'{e}vy--Prokhorov metric induced by $d_n$, respectively.

\begin{prop}\label{metric}
 Let $(X,f)$ be a topological dynamical system with the metric $d$ on $X$. For any $p\geq 1$, $n\in\mathbb{N}$, $$W_{p,n}\leq W_{p}^{n}.$$ %In particular, if $f$ is a homeomorphism, we have $$W_p^n\leq nW_{p,n}.$$
\end{prop}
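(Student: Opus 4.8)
The plan is to exploit the fact that a single transport plan between $\mu$ and $\nu$ can be pushed forward so as to yield, simultaneously, a competitor transport plan for every one of the distances $W_p(f_{\mathcal{M}}^i\mu, f_{\mathcal{M}}^i\nu)$ appearing in the definition of $W_{p,n}$. Concretely, I would fix $\mu,\nu\in\mathcal{M}(X)$ and an arbitrary $\pi\in\Pi(\mu,\nu)$. For each $0\le i\le n-1$ consider the map $F_i:X\times X\to X\times X$ given by $F_i(x,y)=(f^ix,f^iy)$, and form the pushforward $(F_i)_\ast\pi$.

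First I would verify that $(F_i)_\ast\pi$ is an admissible plan between the iterates, i.e. $(F_i)_\ast\pi\in\Pi(f_{\mathcal{M}}^i\mu, f_{\mathcal{M}}^i\nu)$. This follows from the commutation $p_1\circ F_i=f^i\circ p_1$ (and likewise $p_2\circ F_i=f^i\circ p_2$) together with functoriality of pushforward: $(p_1)_\ast(F_i)_\ast\pi=(f^i)_\ast(p_1)_\ast\pi=(f^i)_\ast\mu=f_{\mathcal{M}}^i\mu$, and symmetrically $(p_2)_\ast(F_i)_\ast\pi=f_{\mathcal{M}}^i\nu$.

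Next I would estimate the cost. Since $(F_i)_\ast\pi$ is one particular plan between $f_{\mathcal{M}}^i\mu$ and $f_{\mathcal{M}}^i\nu$, the definition of $W_p$ as an infimum over transport plans, followed by the change-of-variables formula for pushforward, gives
$$W_p\big(f_{\mathcal{M}}^i\mu, f_{\mathcal{M}}^i\nu\big)^p \le \int [d(x',y')]^p\,\mathrm{d}(F_i)_\ast\pi(x',y') = \int [d(f^ix,f^iy)]^p\,\mathrm{d}\pi(x,y).$$
By the definition $d_n(x,y)=\max_{0\le i\le n-1}d(f^ix,f^iy)$ one has $d(f^ix,f^iy)\le d_n(x,y)$ pointwise for every such $i$, so the right-hand side is bounded above by $\int [d_n(x,y)]^p\,\mathrm{d}\pi(x,y)$. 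Crucially this bound is uniform in $i$, so taking the maximum over $0\le i\le n-1$ yields $W_{p,n}(\mu,\nu)^p\le \int [d_n(x,y)]^p\,\mathrm{d}\pi(x,y)$.

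Finally, since $\pi\in\Pi(\mu,\nu)$ was arbitrary, I would pass to the infimum over all such $\pi$ on the right-hand side, which by (\ref{wn}) equals $W_p^n(\mu,\nu)^p$; extracting $p$-th roots then gives $W_{p,n}\le W_p^n$. I do not expect a genuine obstacle here, as the argument is a direct chain of inequalities; the only point requiring care is the marginal computation for $(F_i)_\ast\pi$, which is precisely what guarantees that one plan ``downstairs'' between $\mu$ and $\nu$ simultaneously controls all the iterated Wasserstein distances ``upstairs.''
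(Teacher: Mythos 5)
Your proposal is correct and follows essentially the same route as the paper: both push a transport plan $\pi\in\Pi(\mu,\nu)$ forward by $f^i\times f^i$, check the marginals, and bound the cost by $\int [d_n]^p\,\mathrm{d}\pi$ using $d(f^ix,f^iy)\le d_n(x,y)$. The only cosmetic difference is that the paper starts from an optimal plan (the infimum being attained) while you take an arbitrary plan and pass to the infimum at the end.
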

\begin{proof}
	We will prove that $W_p(f_{\mathcal{M}}^i\mu,f_{\mathcal{M}}^i\nu)\leq W_p^n(\mu,\nu)$ for $\mu,\nu\in\mathcal{M}(X)$ and any $i=0,1,\ldots,n-1$. Let $\varepsilon=W_{p}^{n}(\mu,\nu)$. Then there exists $\pi\in\Pi(\mu,\nu)$ such that $\displaystyle\int [d_n(x,y)]^p\,\mathrm{d}\pi(x,y)=\varepsilon^p$. As $d_n(x,y)\geq d(f^i(x), f^i(y))$ for $0\leq i<n$, we have $$\int[d(f^i(x),f^i(y))]^p\,\mathrm{d}\pi\leq\varepsilon^p.$$
	Consider the probability measure $(f^i\times f^i)_{\ast}\pi$ of $X\times X$. It is easy to check that $(f^i\times f^i)_{\ast}\pi\in\Pi(f^i_{\mathcal{M}}(\mu),f^i_{\mathcal{M}}(\nu))$. Moreover, $$\int[d(x,y)]^p\,\mathrm{d}(f^i\times f^i)_{\ast}\pi=\int[d(x,y)]^p\,\mathrm{d}\pi\circ(f^i\times f^i)^{-1}=\int[d(f^i(x),f^i(y))]^p\,\mathrm{d}\pi\leq\varepsilon^p,$$ i.e., $W_p(f_{\mathcal{M}}^i(\mu),f_{\mathcal{M}}^i(\nu))\leq\varepsilon$.

\end{proof}
\begin{prop}
	For any topological dynamical system $(X,f)$ with metric $d$ on $X$ and $n\in\mathbb{N}$, $$LP_n\leq LP^n.$$
\end{prop}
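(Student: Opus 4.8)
The plan is to mirror the argument of Proposition \ref{metric}, replacing the Wasserstein cost estimate with the transport-plan characterization of the Lévy--Prokhorov metric recalled in Section 2.2. Since $LP_n(\mu,\nu)=\max_{0\le i\le n-1}LP(f_{\mathcal{M}}^i\mu,f_{\mathcal{M}}^i\nu)$, it suffices to show that $LP(f_{\mathcal{M}}^i\mu,f_{\mathcal{M}}^i\nu)\le LP^n(\mu,\nu)$ for every $\mu,\nu\in\mathcal{M}(X)$ and every $i\in\{0,1,\dots,n-1\}$, and then take the maximum over $i$.

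First I would fix $\varepsilon>LP^n(\mu,\nu)$ and produce a witnessing plan. Here I would record the small but useful observation that the family of admissible thresholds is upward closed: if some $\pi\in\Pi(\mu,\nu)$ satisfies $\pi(\{(x,y):d_n(x,y)>\varepsilon_0\})<\varepsilon_0$, then the same $\pi$ satisfies $\pi(\{(x,y):d_n(x,y)>\varepsilon'\})<\varepsilon'$ for every $\varepsilon'>\varepsilon_0$, because $\{d_n>\varepsilon'\}\subseteq\{d_n>\varepsilon_0\}$. Consequently, for the chosen $\varepsilon$ there exists $\pi\in\Pi(\mu,\nu)$ with $\pi(\{(x,y):d_n(x,y)>\varepsilon\})<\varepsilon$.

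The key step is the monotonicity $d_n(x,y)\ge d(f^i(x),f^i(y))$ for $0\le i<n$, which yields the set inclusion $\{(x,y):d(f^i(x),f^i(y))>\varepsilon\}\subseteq\{(x,y):d_n(x,y)>\varepsilon\}$ and hence $\pi(\{(x,y):d(f^i(x),f^i(y))>\varepsilon\})<\varepsilon$. Exactly as in Proposition \ref{metric}, the pushforward $(f^i\times f^i)_{\ast}\pi$ belongs to $\Pi(f_{\mathcal{M}}^i\mu,f_{\mathcal{M}}^i\nu)$, and by change of variables $(f^i\times f^i)_{\ast}\pi(\{(u,v):d(u,v)>\varepsilon\})=\pi(\{(x,y):d(f^i(x),f^i(y))>\varepsilon\})<\varepsilon$. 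By the transport-plan definition of the Lévy--Prokhorov metric, this gives $LP(f_{\mathcal{M}}^i\mu,f_{\mathcal{M}}^i\nu)\le\varepsilon$; letting $\varepsilon\downarrow LP^n(\mu,\nu)$ and maximizing over $i$ completes the proof.

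The only delicate point, and it is minor, is bookkeeping with the strict inequalities: because $LP$ is defined as an infimum over valid $\varepsilon$ rather than as a realized minimum, I would work throughout with thresholds strictly above $LP^n(\mu,\nu)$ and pass to the limit at the very end, instead of trying to attain the infimum exactly. In contrast to the Wasserstein case, no compactness or attainment of an optimal plan is required here, so the argument is if anything slightly cleaner.
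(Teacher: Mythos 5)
Your proof is correct, but it follows a genuinely different route from the paper's. The paper argues directly from the primary (Borel-set) definition of the L\'evy--Prokhorov metric: setting $\varepsilon:=LP^n(\mu,\nu)$, it establishes the neighbourhood inclusion $(f^{-i}E)^{\varepsilon}_n\subset f^{-i}E^{\varepsilon}$ and then propagates the two defining inequalities $\mu(E)\leq\nu(E^{\varepsilon}_n)+\varepsilon$, $\nu(E)\leq\mu(E^{\varepsilon}_n)+\varepsilon$ through $f^{-i}$ to get $f_{\mathcal{M}}^i\mu(E)\leq f_{\mathcal{M}}^i\nu(E^{\varepsilon})+\varepsilon$ and its symmetric counterpart. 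You instead invoke the transport-plan characterization of $LP$ and push the plan forward by $f^i\times f^i$, exactly mirroring the proof of Proposition \ref{metric}. Both arguments are valid; yours buys uniformity with the Wasserstein case and, as you note, handles the infimum more carefully (you work with $\varepsilon>LP^n(\mu,\nu)$ and pass to the limit, whereas the paper tacitly assumes the defining inequalities hold at the infimum itself). The price is that your argument leans on the equivalence of the two definitions of $LP$ (Strassen-type duality, cited from Villani), applied moreover to the metric $d_n$ rather than $d$ --- harmless here since $d_n$ is topologically equivalent to $d$ on the compact space $X$, but it makes your proof less self-contained than the paper's elementary set-inclusion argument.
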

\begin{proof}
	Let $\varepsilon:=LP^n(\mu,\nu)$, and for a Borel set $E$, let $E^{\varepsilon}_n$ denote the $\varepsilon$-neighbourhood with metric $d_n$. Then
	$$
	\mu(E)\leq \nu(E^{\varepsilon}_n)+\varepsilon,\
	\nu(E)\leq\mu(E^{\varepsilon}_n)+\varepsilon.
	$$
	For any $i=0,1,\ldots,n-1$ and  $z\in(f^{-i}E)^{\varepsilon}_n$, we have $d(f^iz,E)<\varepsilon$, i.e. $z\in f^{-i}E^{\varepsilon}$, which leads to $(f^{-i}E)^{\varepsilon}_n\subset f^{-i}E^{\varepsilon}$. Therefore,
	$$f_{\mathcal{M}}^i\mu(E)
	=
	\mu(f^{-i}E)
	\leq
	\nu((f^{-i}E)^{\varepsilon}_n)+\varepsilon
	\leq
	\nu(f^{-i}E^{\varepsilon})+\varepsilon
	=
	f_{\mathcal{M}}^i\nu(E^{\varepsilon})+\varepsilon.$$
	Similarly, $f_{\mathcal{M}}^i\nu(E)
	\leq f_{\mathcal{M}}^i\mu(E^{\varepsilon})+\varepsilon$, and hence $LP(f_{\mathcal{M}}^i\mu,f_{\mathcal{M}}^i\nu)\leq\varepsilon$, which completes the proof.
\end{proof}

\subsection{The induced transformation $f_{\mathcal{K}}$}
Let $(X,d)$ be a compact metric space and $f:X\to X$ a continuous map. The Hausdorff metric $H$ on $\mathcal{K}(X)$ is defined by
$$H(B,C)=\inf\{\varepsilon>0:B\subset C^{\varepsilon}\ \text{and}\ C\subset B^{\varepsilon} \},\ \ B,C\in\mathcal{K}(X).$$
This metric turns $\mathcal{K}(X)$ into a compact metric space.
The induced map on $\mathcal{K}(X)$ is defined by
\begin{align*}
f_{\mathcal{K}}:\mathcal{K}(X)&\to\mathcal{K}(X)\\
B&\mapsto f(B).
\end{align*}
For $n\in\mathbb{N}$, denote by $H^n$ the Hausdorff metric induced by the $n$th Bowen metric of $(X,f)$, i.e.,
\begin{equation}\label{hn}
	H^n(B,C)=\inf\{\varepsilon>0:B\subset C_n^{\varepsilon}\ \text{and}\ C\subset B_n^{\varepsilon} \},
\end{equation}
where $A_n^{\varepsilon}=\{x\in X:d_n(x,A)<\varepsilon \}$  for a set $A$. Moreover, set $$H_n(B,C)=\max\{H(f_\mathcal{K}^iB,f_\mathcal{K}^iC):i=0,1,\ldots,n-1 \},$$ the $n$th Bowen metric for $f_{\mathcal{K}}$. Denote by $\text{diam}(B,d_n)$ the diameter of a set $B\subset X$ under the metric $d_n$.

\begin{prop}\label{fk}
	Let  $(X,f)$ be a topological dynamical system. For $n\in\mathbb{N}$, $B,C\in\mathcal{K}(X)$, we have $$H_n(B,C)\leq H^n(B,C)\leq H_n(B,C)+\max\left\{{\rm diam}(B,d_n), {\rm diam}(C,d_n) \right\}.$$
\end{prop}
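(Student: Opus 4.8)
The plan is to establish the two inequalities separately. Throughout, write $\delta := H_n(B,C)$ and $M := \max\{\mathrm{diam}(B,d_n), \mathrm{diam}(C,d_n)\}$, and recall that $d_n(x,y) = \max_{0\leq i\leq n-1} d(f^i x, f^i y)$ dominates every coordinate distance $d(f^i x, f^i y)$.

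For the lower bound $H_n(B,C)\leq H^n(B,C)$, I would fix an index $i\in\{0,\ldots,n-1\}$ and show $H(f^i(B),f^i(C))\leq H^n(B,C)$. The key observation is that $d_n$-closeness forces coordinatewise closeness: if $\varepsilon>H^n(B,C)$, then $B\subset C_n^{\varepsilon}$, so each $x\in B$ admits some $y\in C$ with $d_n(x,y)<\varepsilon$, hence $d(f^i x,f^i y)<\varepsilon$ with $f^i y\in f^i(C)$. This yields $f^i(B)\subset (f^i(C))^{\varepsilon}$; the reverse inclusion is identical. Taking the maximum over $i$ and letting $\varepsilon\downarrow H^n(B,C)$ gives the claim.

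For the upper bound $H^n(B,C)\leq\delta+M$, I would prove that $d_n(x,C)\leq\delta+M$ for every $x\in B$, the symmetric estimate $d_n(x,B)\leq\delta+M$ for $x\in C$ following by the same argument. Given $x\in B$, the inequality $H(B,C)\leq\delta$ (the $i=0$ term of $H_n$) together with compactness of $C$ supplies a single $y\in C$ with $d(x,y)\leq\delta$. For each $j\in\{1,\ldots,n-1\}$ I would then bound $d(f^j x,f^j y)$: from $H(f^j(B),f^j(C))\leq\delta$ there is $y_j\in C$ with $d(f^j x,f^j y_j)\leq\delta$, and a triangle inequality gives
\[
d(f^j x,f^j y)\leq d(f^j x,f^j y_j)+d(f^j y_j,f^j y)\leq\delta+d_n(y_j,y)\leq\delta+M,
\]
using that $y,y_j\in C$ so that $d(f^j y_j,f^j y)\leq d_n(y_j,y)\leq\mathrm{diam}(C,d_n)\leq M$. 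Combining this with $d(x,y)\leq\delta$ over all coordinates yields $d_n(x,y)\leq\delta+M$, hence $d_n(x,C)\leq\delta+M$.

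The main obstacle is the genuine mismatch between the two metrics: $H_n$ only guarantees, for each individual iterate $j$, that \emph{some} point of the opposite set is $\delta$-close in that single coordinate, and the approximating point $y_j$ may depend on $j$. By contrast $H^n$ demands one point that is simultaneously close in all of the coordinates $0,\ldots,n-1$. The diameter term is exactly what absorbs the discrepancy between the coordinate-dependent approximants $y_j$ and the fixed choice $y$: since all of these points lie in $C$, their mutual $d_n$-distance never exceeds $\mathrm{diam}(C,d_n)$. I expect the only delicate bookkeeping to be the strict-versus-nonstrict inequalities in the definitions of $A_n^{\varepsilon}$ and $H^n$, handled by passing to $\varepsilon>H^n(B,C)$ in the first inequality and taking the infimum over admissible radii at the end of the second.
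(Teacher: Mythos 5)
Your proof is correct and follows essentially the same route as the paper's: the first inequality via $d_n\geq d$ pushing $d_n$-closeness down to each coordinate, and the second via the triangle inequality through a fixed reference approximant in $C$ (resp. $B$), with the $d_n$-diameter absorbing the dependence of the coordinatewise approximants $y_j$ on $j$. The only differences are cosmetic (your reference point is the $i=0$ approximant rather than the paper's $y_1$, and you are more explicit about the strict-versus-nonstrict bookkeeping).
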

\begin{proof}
	Since $d_n\geq d$, $B\subset C_n^{\varepsilon}$ implies that $f^iB\subset(f^iC)^{\varepsilon}$, $i=0,1,\ldots,n-1$, therefore $H_n(B,C)\leq H^n(B,C)$.
	
	If $\varepsilon:=H_n(B,C)$, then for any $x\in B$ and $i=0,1,\ldots,n-1$, there exists $y_i\in C$ such that $d(f^ix,f^iy_i)\leq\varepsilon$. Then $$d(f^ix,f^iy_1)\leq d(f^ix,f^iy_i)+d(f^iy_1,f^iy_i)\leq\varepsilon+\text{diam}(C,d_n),\ i=0,1,\ldots,n-1,$$ which implies that $B\subset C_n^{\varepsilon+\text{diam}(C,d_n)}$. For the same reason $C\subset B_n^{\varepsilon+\text{diam}(B,d_n)}$.
\end{proof}

For $x\in X$, the $n$th empirical measure associated to $x$ is $$e_n^f(x):=\frac{1}{n}\sum_{i=0}^{n-1}\delta_{f^ix},$$ where $\delta_y$ denotes the Dirac measure at the point $y\in X$. Let $V(x)$ denote the set of limit points of the sequence $\{e_n^f \}_{n\in\mathbb{N}}$; it is always a compact connected nonempty subset of $\mathcal{M}_f(X)$. 
\begin{prop}
	The map defined by 
	\begin{align*}
	\varPhi:X&\to\mathcal{K}(\mathcal{M}(X))\\
	x&\mapsto V(x).
	\end{align*}
	is continuous.
\end{prop}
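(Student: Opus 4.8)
The plan is to establish continuity sequentially and pointwise: fix $x_0\in X$ and a sequence $x_k\to x_0$, and show $V(x_k)\to V(x_0)$ in the Hausdorff metric on $\mathcal{K}(\mathcal{M}(X))$ built from $W_1$ on $\mathcal{M}(X)$; this suffices since $\mathcal{M}(X)$, and hence $\mathcal{K}(\mathcal{M}(X))$, is a compact metric space. As the Hausdorff metric is governed by two one-sided inclusions, I would split the task into a \emph{lower} part (every $\mu\in V(x_0)$ is a $W_1$-limit of some $\mu_k\in V(x_k)$) and an \emph{upper} part (every limit of a sequence $\mu_k\in V(x_k)$ lies in $V(x_0)$). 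Three elementary facts would be the working tools. First, for each fixed $n$ the empirical map $x\mapsto e_n^f(x)$ is continuous, because $f$ is continuous, $x\mapsto\delta_x$ embeds $X$ isometrically into $(\mathcal{M}(X),W_1)$, and finite averaging is $W_1$-continuous. Second, the consecutive increments satisfy $W_1\!\left(e_{n+1}^f(x),e_n^f(x)\right)\le \frac{\mathrm{diam}\,X}{\,n+1\,}\to0$ uniformly in $x$ (using $e_{n+1}^f(x)=\frac{n}{n+1}e_n^f(x)+\frac{1}{n+1}\delta_{f^nx}$), which is exactly what forces each $V(x)$ to be connected. Third, $V$ is orbit-invariant, $V(f(x))=V(x)$, because $W_1\!\left(e_n^f(f(x)),e_n^f(x)\right)=W_1\!\left(\tfrac1n\delta_{f^nx},\tfrac1n\delta_x\right)$-type increments tend to $0$.

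For the lower inclusion I would begin from $\mu\in V(x_0)$, pick $n$ with $W_1(e_n^f(x_0),\mu)$ small, and invoke finite-$n$ continuity to conclude $W_1(e_n^f(x_k),\mu)$ is small for all large $k$. The gap to close is that $e_n^f(x_k)$ is only a \emph{finite-time} measure and need not belong to the limit set $V(x_k)$. I would try to promote finite-time proximity to genuine membership in $V(x_k)$ by exploiting connectedness of $V(x_k)$ together with the uniform slow variation of the increments above, locating along the orbit of $x_k$ a sequence of blocks whose empirical measures linger near $\mu$ and accumulate there.

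For the upper inclusion I would take $\mu_k\in V(x_k)$ with $\mu_k\to\mu$, choose $n_k\to\infty$ with $W_1(e_{n_k}^f(x_k),\mu_k)\to0$ so that $e_{n_k}^f(x_k)\to\mu$, and then attempt to replace $e_{n_k}^f(x_k)$ by $e_{n_k}^f(x_0)$ in order to deduce $\mu\in V(x_0)$. This is precisely the step I expect to be the decisive obstacle: finite-$n$ continuity of $e_n^f$ is not uniform in $n$, and letting $k\to\infty$ forces $n_k\to\infty$ at the same time, so the orbit segments $\{f^ix_k\}_{i<n_k}$ and $\{f^ix_0\}_{i<n_k}$ may separate long before their time averages stabilize. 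The hard part is therefore an interchange of the limits $k\to\infty$ and $n\to\infty$ — controlling the long-horizon statistics of $x_0$ by those of nearby points uniformly in the horizon. I regard this as genuinely delicate, and I would flag that some additional structure (an equicontinuity- or shadowing-type mechanism transferring long-time averages between nearby orbits) appears necessary: under sensitive dependence the empirical limit sets of nearby points can differ drastically — for instance for the expansive, specification systems that are central to this paper, points with radically different $V(\cdot)$ are dense — which is exactly the behaviour this direction must overcome. I would accordingly invest the bulk of the effort here, and be prepared either to supply such an input or to weaken the conclusion (e.g.\ to a semicontinuity statement or to a restricted class of base points) if the unrestricted claim proves unattainable.
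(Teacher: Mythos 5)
You have not produced a proof: your plan sets up the right framework (sequential continuity in the Hausdorff metric, the split into a lower and an upper inclusion, the elementary facts about $e_n^f$), but it stalls exactly at the upper inclusion, where the limits $k\to\infty$ and $n\to\infty$ must be interchanged, and you explicitly flag that step as possibly unattainable. That flag is the correct conclusion rather than a defect of your write-up: the proposition is false as stated. Take the full shift on two symbols, $x_0=0^\infty$ and $y_k=0^k1^\infty$. Then $y_k\to x_0$ and $V(x_0)=\{\delta_{0^\infty}\}$, while $f^j(y_k)=1^\infty$ for all $j\geq k$, so $e_n^f(y_k)\to\delta_{1^\infty}$ as $n\to\infty$ and $V(y_k)=\{\delta_{1^\infty}\}$ for every $k$. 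Hence $H(V(y_k),V(x_0))$ is bounded away from $0$, and neither the lower nor the upper inclusion can hold; no argument can close the gap you isolated without restricting the class of base points or otherwise strengthening the hypotheses.

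For comparison, the paper's proof takes a route different from yours in one respect: instead of trying to promote the finite-time measure $e_n^f(x_k)$ to a member of $V(x_k)$, it pairs $e_{n_k}^f(x)$ with $e_{n_k}^f(y)$ along the same time sequence and extracts $\nu\in V(y)$ as a weak* limit of the latter. But it founders on the same rock you identified. There, the time $n_0$ is chosen only after $\mu$, $y$ and $\nu$ are fixed (it must satisfy $D(e_{n_0}^f(y),\nu)<\varepsilon/4$), and the closeness threshold $\zeta$ is then chosen so that $\varphi_i\circ f^j$ varies little for $j<n_0$; consequently $\zeta$ depends on $y$ itself, and the quantifiers are in the wrong order to produce a modulus of continuity. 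This is precisely the non-uniformity in $n$ of the continuity of $x\mapsto e_n^f(x)$ that you pinpointed. In short, your diagnosis of where the difficulty lies is more accurate than the paper's argument; the statement needs additional hypotheses (or a restriction to suitable base points --- note that the shift example above defeats upper and lower semicontinuity alike at $x_0$) before any proof can succeed.
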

\begin{proof}
	For $\varepsilon>0$, we will show that $H(V(x),V(y))<\varepsilon
	$ when $x,y\in X$ close enough, by proving that for any $\mu\in V(x)$, there exists a $\nu\in V(y)$ such that $D(\mu,\nu)<\varepsilon$.
	
	Take $M\in\mathbb{N}$ such that $\sum_{n=M+1}^{\infty}\frac{1}{2^n}<{\varepsilon}/{4}$. 
	For $\mu\in V(x)$, there exists a sequence of positive integers $\{n_k\}_{k\geq 1}$ such that $n_k\to\infty$ and $e_{n_k}^f(x)\to\mu$ as $k\to\infty$. Consider $\{e_{n_k}^f(y) \}$. We may assume that $e_{n_k}^f(y)\to\nu\in V(y)$. Pick $n_{0}$ large enough such that $D(e_{n_{0}}^f(x),\mu)<{\varepsilon}/{4}$ and $D(e_{n_{0}}^f(y),\nu)<{\varepsilon}/{4}$. We can find $\zeta>0$ such that $|\varphi_i(f^jp)-\varphi_i(f^jq)|<{\varepsilon}/{4}$ for $i=1,\ldots, M$ and $j=0,1,\ldots,n_{0}-1$ whenever $d(p,q)<\zeta$. Therefore,
	\begin{align*}
		D(\mu,\nu)\leq& D(e_{n_{0}}^f(x),\mu)+D(e_{n_{0}}^f(y),\nu)+D(e_{n_{0}}^f(x),e_{n_{0}}^f(y))\\
		<&
		\dfrac{3\varepsilon}{4}+\sum_{n=1}^{M}\dfrac{|\int\varphi_n\,\mathrm{d}e_{n_{0}}^f(x)-\int\varphi_n\,\mathrm{d}e_{n_{0}}^f(y)|}{2^n}
		<\varepsilon.
	\end{align*}
\end{proof}

\section{Dynamical Emergence}
In this section, we define the dynamical emergences and entropy orders for $\mathcal{M}(X)$, $\mathcal{K}(X)$ and invariant measures.
\subsection{Dynamical emergence of $\mathcal{M}(X)$}
We first recall the definition of  topological entropy for a topological dynamical system $(X,f)$. A subset  $E\subset X$ is called a $(n,\varepsilon)$-spanning set if for every $x\in X$ there exists $y\in E$ such that $d_n(x,y)<\varepsilon$. A subset $F$ is called a  $(n,\varepsilon)$-separated set if $d_n(x,y)\geq\varepsilon$ for every $x,y\in F$ with $x\neq y$. Denote by $N(n,\varepsilon)$ the smallest cardinality of any $(n,\varepsilon)$-spanning set, and by $S(n,\varepsilon)$ the largest cardinality of any $(n,\varepsilon)$-separated set. The topological entropy, $h_{top}(X,f)$, is defined by
\begin{align*}
  h_{top}(f)&=\lim_{\varepsilon\to0}\limsup_{n\to\infty}\frac{\log N(n,\varepsilon)}{n}\\
  &=\lim_{\varepsilon\to0}\limsup_{n\to\infty}\frac{\log S(n,\varepsilon)}{n}.
\end{align*}
Especially, the {\rm limsup} can be replaced by {\rm liminf} in the above formula.

Let $(X,d)$ be a compact metric space and $f_{\mathcal{M}}$ the induced transformation.
We will define the dynamical emergence for an arbitrary nonempty subset $\mathcal{Z}\subset\mathcal{M}(X)$.

For $\varepsilon>0$, a set $E\subset \mathcal{M}(X)$ is called a $(n,\varepsilon)$-spanning set of $\mathcal{Z}$ if  for every $\mu\in \mathcal{Z}$, there exists $\nu\in E$ such that $W_1^n(\mu,\nu)\leq\varepsilon$. Further, $E\subset \mathcal{Z}$ is called a5 $(n,\varepsilon)$-separated set of $\mathcal{Z}$ if for distinct $\mu,\nu\in E$, we have $W_1^n(\mu,\nu)>\varepsilon$. Let $N_\mathcal{M}(\mathcal{Z},n,\varepsilon)$ denote the smallest cardinality of the $(n,\varepsilon)$-spanning sets for $\mathcal{Z}$, and let $S_\mathcal{M}(\mathcal{Z},n,\varepsilon)$ denote the largest cardinality of the $(n,\varepsilon)$-separated sets for $\mathcal{Z}$. It is easy to check that $N_\mathcal{M}(\mathcal{Z},n,\varepsilon)\leq S_\mathcal{M}(\mathcal{Z},n,\varepsilon)\leq N_\mathcal{M}(\mathcal{Z},n,\varepsilon/2)$. For simplicity, write $N_\mathcal{M}(n,\varepsilon)$ for $N_\mathcal{M}(\mathcal{M}(X),n,\varepsilon)$ and $S_\mathcal{M}(n,\varepsilon)$ for $S_\mathcal{M}(\mathcal{M}(X),n,\varepsilon)$.
\begin{defn}\rm\label{mem}
	For a nonempty subset $\mathcal{Z}\subset\mathcal{M}(X)$ and $\varepsilon>0$,
	\emph{the dynamical emergence} of $\mathcal{Z}$ with scale $\varepsilon$ is defined as the sequence $\left\{ N_\mathcal{M}(\mathcal{Z},n,\varepsilon) \right\}_{n\in\mathbb{N}}$.
	In addition, let
	$$\mathcal{E}(f_\mathcal{M},\mathcal{Z})=\lim_{\varepsilon\to0}\limsup_{n\to\infty}\frac{\log\log N_\mathcal{M}(\mathcal{Z},n,\varepsilon)}{n},$$ and call it the \emph{entropy order} of $\mathcal{Z}$ associated with $\{W_1^n \}$. Write $\mathcal{E}(f_\mathcal{M},\mathcal{M}(X))$ as $\mathcal{E}(f_\mathcal{M})$ for short.
\end{defn}
\begin{rem}
	{\rm (1)} Inequalities  $(2.1)$ and $(2.2)$ show that we can replace $W_1^n$ by $W_p^n$, $1<p<\infty$ and $LP^n$ in the definition of entropy order.
	
	\noindent{\rm (2)}  
	It is easy to check that $\mathcal{E}(f_\mathcal{M},\mathcal{Z})=\lim\limits_{\varepsilon\to0}\limsup\limits_{n\to\infty}\dfrac{\log\log S_\mathcal{M}(\mathcal{Z},n,\varepsilon)}{n}$.
\end{rem}

If we replace metric $W_1^n$ by the Bowen metrics on $(\mathcal{M}(X), f_\mathcal{M})$, $W_{p,n}$, $1\leq p<\infty$ and $LP_n$.
Since $\mathcal{M}(X)$ is compact, we have $\limsup\limits_{n\to\infty}\frac{1}{n}\log \tilde{N}_\mathcal{M}(\mathcal{Z},n,\varepsilon)<\infty$, where $\tilde{N}_\mathcal{M}(\mathcal{Z},n,\varepsilon)$ denotes the smallest cardinality of $\varepsilon$-spanning sets for $\mathcal{Z}\subset\mathcal{M}(X)$ with metric $W_{1,n}$. Hence for every $\varepsilon>0$ we have $\limsup\limits_{n\to\infty}\frac{1}{n}\log\log \tilde{N}_\mathcal{M}(\mathcal{Z},n,\varepsilon)=0$.
%This demonstrates that the Bowen metric is not suitable for the study of the relationship between systems and their induced systems, and the choice of metric in Definition 3.1 seems to be adapted to our theory.
This demonstrates that the choice of metric in Definition 3.1 seems to be more suitable than Bowen metric on $\mathcal{M}(X)$ for the study of emergence.

A continuous map $g:Y\to Y$ of a compact metric space $Y$ is said to be a factor of $f:X\to X$ if there exists a continuous map $\phi$ from $X$ onto $Y$ satisfying $g\circ\phi=\phi\circ f$. Moreover, if $\phi$ is a homeomorphism, we say $g$ is topologically conjugate to $f$. It is well known that $h_{top}(f)\geq h_{top}(g)$ if $g$ is a factor of $f$ and $h_{top}(f)=h_{top}(g)$ if $g$ is topologically conjugate to $f$ (see \cite{Walters1982} for a proof).   Note that if $g$ is a factor of $f$, then $\phi_{\ast}:\mathcal{M}(X)\to\mathcal{M}(Y)$, defined by $\phi_{\ast}\mu=\mu\circ\phi^{-1}$, is a continuous surjective map, and satisfies $\phi_{\ast}\circ f_{\mathcal{M}}=g_{\mathcal{M}}\circ\phi_{\ast}$.

Now, we demonstrate some properties of $\mathcal{E}(f_\mathcal{M},\cdot)$ as follows:
\begin{prop}\label{31}
	{\rm (1)} If $\mathcal{Z}'\subset \mathcal{Z}\subset \mathcal{M}(X)$, then $\mathcal{E}(f_{\mathcal{M}},\mathcal{Z}')\leq \mathcal{E}(f_{\mathcal{M}},\mathcal{Z})$.\\
	{\rm (2)} If $g:Y\to Y$ is a factor of $f:X\to X$, then $\mathcal{E}(f_{\mathcal{M}})\geq \mathcal{E}(g_{\mathcal{M}})$. In particular, if $f$ and $g$ are topologically conjugate then $\mathcal{E}(f_{\mathcal{M}})= \mathcal{E}(g_{\mathcal{M}})$.\\
	{\rm (3)} $\mathcal{E}(f_{\mathcal{M}},\mathcal{Z})=\mathcal{E}(f_{\mathcal{M}},\overline{\mathcal{Z}})$, where $\overline{\mathcal{Z}}$ denotes the closure of $\mathcal{Z}$.
\end{prop}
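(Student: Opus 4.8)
The plan is to treat the three parts in order of increasing difficulty, with part (2) carrying the real content.

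For part (1) I would argue directly at the level of spanning numbers. If $\mathcal{Z}'\subset\mathcal{Z}$, then any $(n,\varepsilon)$-spanning set $E$ of $\mathcal{Z}$ automatically spans $\mathcal{Z}'$, since every $\mu\in\mathcal{Z}'$ lies in $\mathcal{Z}$ and is therefore within $W_1^n$-distance $\varepsilon$ of some element of $E$. Hence $N_\mathcal{M}(\mathcal{Z}',n,\varepsilon)\leq N_\mathcal{M}(\mathcal{Z},n,\varepsilon)$ for all $n$ and $\varepsilon$, and applying $\frac{\log\log(\cdot)}{n}$, then $\limsup_{n\to\infty}$, then $\lim_{\varepsilon\to0}$ preserves the inequality, giving $\mathcal{E}(f_\mathcal{M},\mathcal{Z}')\leq\mathcal{E}(f_\mathcal{M},\mathcal{Z})$.

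For part (3) the inequality $\mathcal{E}(f_\mathcal{M},\mathcal{Z})\leq\mathcal{E}(f_\mathcal{M},\overline{\mathcal{Z}})$ is immediate from part (1). For the reverse I would use that, for each fixed $n$, the Bowen metric $d_n$ is topologically equivalent to $d$ on the compact space $X$ (each $f^i$ being uniformly continuous), so $W_1^n$ metrizes the weak*-topology on $\mathcal{M}(X)$ exactly as $W_1$ does. Consequently each ball $\{\mu:W_1^n(\mu,\nu)\leq\varepsilon\}$ is weak*-closed. Taking a minimal $(n,\varepsilon)$-spanning set $E$ of $\mathcal{Z}$, which is finite by compactness of $\mathcal{M}(X)$, the finite union $\bigcup_{\nu\in E}\{\mu:W_1^n(\mu,\nu)\leq\varepsilon\}$ is closed and contains $\mathcal{Z}$, hence contains $\overline{\mathcal{Z}}$. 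Thus $E$ spans $\overline{\mathcal{Z}}$, giving $N_\mathcal{M}(\overline{\mathcal{Z}},n,\varepsilon)\leq N_\mathcal{M}(\mathcal{Z},n,\varepsilon)$; combined with part (1) this forces equality of the spanning numbers and hence of the entropy orders.

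Part (2) is where the main difficulty lies, and I would carry it out through the pushforward $\phi_*$. Write $d$, $d_n$ for the metrics on $X$ and $d_Y$, $d_{Y,n}$ for those on $Y$. Fix $\varepsilon>0$; since $\phi$ is uniformly continuous on compact $X$, choose $\delta\in(0,1]$ with $d(x,x')<\delta\Rightarrow d_Y(\phi x,\phi x')<\varepsilon$, and using the intertwining $g^i\circ\phi=\phi\circ f^i$ this upgrades, \emph{uniformly in $n$}, to $d_n(x,x')<\delta\Rightarrow d_{Y,n}(\phi x,\phi x')<\varepsilon$. Given $\mu,\mu'\in\mathcal{M}(X)$ with optimal plan $\pi\in\Pi(\mu,\mu')$ for $W_1^n$, the pushforward $(\phi\times\phi)_*\pi$ lies in $\Pi(\phi_*\mu,\phi_*\mu')$, so (with $W_1^n$ now formed from $d_{Y,n}$) one has $W_1^n(\phi_*\mu,\phi_*\mu')\leq\int d_{Y,n}(\phi x,\phi x')\,\mathrm{d}\pi$.

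The crucial estimate is to split this integral over $\{d_n<\delta\}$ and its complement: on the former the integrand is below $\varepsilon$, while on the latter I bound $d_{Y,n}\leq{\rm diam}\,Y$ and control $\pi(\{d_n\geq\delta\})\leq W_1^n(\mu,\mu')/\delta$ by Markov's inequality. This yields $W_1^n(\phi_*\mu,\phi_*\mu')\leq\varepsilon+({\rm diam}\,Y)\,W_1^n(\mu,\mu')/\delta$ with all constants independent of $n$. Setting $\delta':=\varepsilon\delta/{\rm diam}\,Y$, any $(n,\delta')$-spanning set $E$ of $\mathcal{M}(X)$ has image $\phi_*(E)$ that $(n,2\varepsilon)$-spans $\mathcal{M}(Y)$, where surjectivity of $\phi_*$ provides the required preimages; hence $N_\mathcal{M}(\mathcal{M}(Y),n,2\varepsilon)\leq N_\mathcal{M}(\mathcal{M}(X),n,\delta')$. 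Since $\delta'\to0$ as $\varepsilon\to0$, applying $\frac{\log\log(\cdot)}{n}$, $\limsup_n$, and $\lim_{\varepsilon\to0}$ (using monotonicity of the scale-dependent quantity) gives $\mathcal{E}(g_\mathcal{M})\leq\mathcal{E}(f_\mathcal{M})$; the conjugacy statement follows by applying this to $\phi$ and $\phi^{-1}$. The main obstacle is precisely this transport-plan estimate — in particular, that the modulus of uniform continuity transfers to the Bowen metrics independently of $n$, which is what allows the $\varepsilon\to0$ limit to close.
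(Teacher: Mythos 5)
Your proof is correct, but for part (2) it takes a genuinely different route from the paper. The paper disposes of (1) as obvious, proves (3) by the doubling trick $N_\mathcal{M}(\overline{\mathcal{Z}},n,2\varepsilon)\le N_\mathcal{M}(\mathcal{Z},n,\varepsilon)$ (your closed-ball variant avoids the doubling but is equivalent once $\varepsilon\to0$), and for (2) simply invokes Theorem \ref{mh}: since $\mathcal{E}(f_\mathcal{M})=h_{top}(f)$, $\mathcal{E}(g_\mathcal{M})=h_{top}(g)$, and $h_{top}(f)\ge h_{top}(g)$ for a factor, the inequality is immediate (this is not circular, since the proof of Theorem \ref{mh} nowhere uses Proposition \ref{31}). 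Your direct argument --- pushing an optimal plan forward by $\phi\times\phi$, transferring the modulus of uniform continuity of $\phi$ to the Bowen metrics uniformly in $n$ via the intertwining $g^i\circ\phi=\phi\circ f^i$, and controlling the bad set $\{d_n\ge\delta\}$ by Markov's inequality --- is correct, self-contained, and strictly more informative: it gives the quantitative bound $N_\mathcal{M}(\mathcal{M}(Y),n,2\varepsilon)\le N_\mathcal{M}(\mathcal{M}(X),n,\delta')$ at the level of spanning numbers, hence $\mathcal{E}(g_\mathcal{M},\phi_*\mathcal{Z})\le\mathcal{E}(f_\mathcal{M},\mathcal{Z})$ for arbitrary subsets and not only the full space, and it does not lean on the main theorem. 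The price is length; the one ingredient you should cite rather than assert is the surjectivity of $\phi_*$, which the paper records explicitly just before the proposition.
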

\begin{proof}
	(1) Obvious.\\
	(2) This is a simple matter, which is clear from Theorem \ref{mh}.\\
	(3) It is sufficient to show $\mathcal{E}(f_{\mathcal{M}},\mathcal{Z})\geq \mathcal{E}(f_{\mathcal{M}},\overline{\mathcal{Z}})$. Denote by $B(\mu,n,\varepsilon)$ the $\varepsilon$-neighbourhood of $\mu\in\mathcal{M}(X)$ with metric $W_1^n$. Then $\overline{\mathcal{Z}}\subset \cup_{\mu\in E}B(\mu,n,2\varepsilon)$ provided by $\mathcal{Z}\subset \cup_{\mu\in E}B(\mu,n,\varepsilon)$,
	and therefore $N_\mathcal{M}(\overline{\mathcal{Z}},n,2\varepsilon)\leq N_\mathcal{M}(\mathcal{Z},n,\varepsilon)$.
\end{proof}

\subsection{Dynamical emergence of a measure}
   For $\mu\in\mathcal{M}_f(X)$, according to the Birkhoff ergodic theorem, for $\mu$-a.e. $x\in X$, the sequence $\{e^f_n(x) \}_{n\in\mathbb{N}}$ converges to a unique measure $e^f(x)\in\mathcal{M}_f(X)$.
\begin{defn}\rm\label{eem}
	For $\mu\in\mathcal{M}_f(X)$, $n\in\mathbb{N}$ and $\varepsilon>0$, define $$\mathcal{E}_\mu(n,\varepsilon):=\min\left\{ N:\exists\mathcal{F}=\{\mu_1,\ldots,\mu_N \}\subset\mathcal{M}(X)\ s.\ t. \int W_1^n(e^f(x),\mathcal{F})\,\mathrm{d}\mu(x)\leq\varepsilon \right\}.$$
	Call the sequence $\left\{ \mathcal{E}_\mu(n,\varepsilon
	) \right\}_{n\in\mathbb{N}}$ the
	\emph{dynamical emergence} of $\mu$ with scale $\varepsilon$. Set
	$$\mathcal{E}_\mu=\lim_{\varepsilon\to0}\limsup_{n\to\infty}\frac{\log\log \mathcal{E}_\mu(n,\varepsilon)}{n},$$ and call it the \emph{entropy order} of $\mu$. In particular, we set $\log0=0.$
\end{defn}
\begin{rem}
	For $\mu\in\mathcal{M}_f^{erg}(X)$, we have $e^f(x)=\mu$ for $\mu$-a.e. $x\in X$, therefore $\mathcal{E}_\mu(n,\varepsilon)=1$ for any $n\in\mathbb{N}$, $\varepsilon>0$, hence $\mathcal{E}_\mu=0$ for each ergodic invariant measure.
\end{rem}
To give the proof of Theorem \ref{variation}, we need the following concept, which is the dynamical version of the quantization number of a measure (see \cite{Graf2000}). Denote by $\mathcal{W}^n$ the $1$-Wasserstein metric on $\mathcal{M}(\mathcal{M}(X))$ induced by $W_1^n$.
\begin{defn}\rm
	For $\omega\in\mathcal{M}(\mathcal{M}(X))$ and $n\in\mathbb{N}$, $\varepsilon>0$, define
	$${Q}(\omega)=\lim_{\varepsilon\to0}\limsup_{n\to\infty}\frac{\log\log Q(\omega,n,\varepsilon)}{n},$$ where
	\begin{align*}
	Q(\omega,n,\varepsilon)=\min \Big\{
	 N:\ &\text{there exists a probability measure}\ \rho\in\mathcal{M}(\mathcal{M}(X))\\
	&\text{supported on a set of cardinality}\ N,\ \text{such that}\ \mathcal{W}^n(\omega,\rho)\leq\varepsilon \Big\}.
	\end{align*}
	%The \emph{dynamical quantization number} of $\omega\in\mathcal{M}(\mathcal{M}(X))$ is defined as
	%$${Q}(\omega)=\lim_{\varepsilon\to0}\limsup_{n\to\infty}\frac{\log\log Q(\omega,n,\varepsilon)}{n}.$$
\end{defn}
\begin{prop}
	$Q(\omega,n,\varepsilon)=\min\Big\{N\in\mathbb{N}: \exists\mathcal{F}=\{\mu_1,\cdots,\mu_N\}
	\subset\mathcal{M}(X)\ \text{such that} \\  \int W_1^n(\mu, \mathcal{F}) \,\mathrm{d}\omega(\mu)\leq\varepsilon\Big\}.$
\end{prop}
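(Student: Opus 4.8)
The plan is to prove the two inequalities between the two expressions for $Q(\omega,n,\varepsilon)$. Write $Q_1(\omega,n,\varepsilon)$ for the quantity as defined above (via finitely supported measures $\rho\in\mathcal{M}(\mathcal{M}(X))$) and $Q_2(\omega,n,\varepsilon)$ for the quantity on the right-hand side (via finite sets $\mathcal{F}\subset\mathcal{M}(X)$), where $W_1^n(\mu,\mathcal{F})=\min_{1\le i\le N}W_1^n(\mu,\mu_i)$; the claim is $Q_1=Q_2$.

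For the inequality $Q_1\le Q_2$, I would start from a set $\mathcal{F}=\{\mu_1,\dots,\mu_N\}$ realizing the right-hand side, i.e.\ with $\int W_1^n(\mu,\mathcal{F})\,\mathrm{d}\omega(\mu)\le\varepsilon$. The idea is to build the nearest-point projection $T\colon\mathcal{M}(X)\to\mathcal{F}$ sending each $\mu$ to a closest atom $\mu_i$, breaking ties by the smallest index, so that $W_1^n(\mu,T(\mu))=W_1^n(\mu,\mathcal{F})$ everywhere. Setting $\rho:=T_\ast\omega$ gives a measure supported on at most $N$ points, and $(\mathrm{id}\times T)_\ast\omega$ is a transport plan in $\Pi(\omega,\rho)$ whose cost is exactly $\int W_1^n(\mu,T(\mu))\,\mathrm{d}\omega(\mu)=\int W_1^n(\mu,\mathcal{F})\,\mathrm{d}\omega(\mu)\le\varepsilon$. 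Hence $\mathcal{W}^n(\omega,\rho)\le\varepsilon$ and $Q_1\le N$.

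For the reverse inequality $Q_2\le Q_1$, I would take a measure $\rho$ supported on a set $\mathcal{F}=\{\mu_1,\dots,\mu_N\}$ with $\mathcal{W}^n(\omega,\rho)\le\varepsilon$. Since $\mathcal{M}(X)$ is compact, so is $\mathcal{M}(\mathcal{M}(X))$, and by the attainment of the optimal transport cost recalled in Section~2 there is a plan $\Pi\in\Pi(\omega,\rho)$ with $\int W_1^n(\mu,\nu)\,\mathrm{d}\Pi(\mu,\nu)=\mathcal{W}^n(\omega,\rho)\le\varepsilon$. Because the second marginal $\rho$ charges only $\mathcal{F}$, we have $\nu\in\mathcal{F}$ for $\Pi$-a.e.\ $(\mu,\nu)$, so $W_1^n(\mu,\nu)\ge W_1^n(\mu,\mathcal{F})$ holds $\Pi$-a.e.; integrating and using that the first marginal of $\Pi$ is $\omega$ yields $\int W_1^n(\mu,\mathcal{F})\,\mathrm{d}\omega(\mu)\le\int W_1^n(\mu,\nu)\,\mathrm{d}\Pi\le\varepsilon$, whence $Q_2\le N$. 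Combining the two directions gives the desired equality.

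The two technical points that need care, and which I expect to be the only genuine obstacles, are: (i) the Borel measurability of the nearest-point map $T$ in the first direction, which follows from a standard measurable-selection argument using that each $\mu\mapsto W_1^n(\mu,\mu_i)$ is continuous and that the region where a fixed atom is the smallest-index minimizer is Borel; and (ii) the attainment of the infimum defining $\mathcal{W}^n$ in the second direction, guaranteed by compactness of $\mathcal{M}(\mathcal{M}(X))$ together with lower semicontinuity of the transport cost, as already recalled after the definition of the Wasserstein metric in Section~2. Everything else is a routine manipulation of marginals.
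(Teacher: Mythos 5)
Your proof is correct and follows essentially the same route as the paper: a measurable nearest-point projection pushing $\omega$ onto $\mathcal{F}$ gives one inequality, and the fact that the second marginal of a (near-)optimal plan charges only $\mathcal{F}$ gives the other. The only cosmetic difference is that in the second direction the paper first disintegrates the plan as $\pi=\int\delta_\xi\otimes\nu_\xi\,\mathrm{d}\omega(\xi)$ and uses $\mathrm{supp}\,\nu_\xi\subset\mathcal{F}$, whereas you integrate the pointwise bound $W_1^n(\mu,\nu)\ge W_1^n(\mu,\mathcal{F})$ (valid $\pi$-a.e.) directly against $\pi$; the two arguments are equivalent.
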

\begin{proof}
	Let $\mathcal{F}\subset\mathcal{M}(X)$ be a set of minimal cardinality $N$ such that $\int W_1^n(\mu,\mathcal{F})\,\mathrm{d}\omega(\mu)\leq\varepsilon$. Take a measurable map $h:\mathcal{M}(X)\to\mathcal{F}$ satisfying $W_1^n(\mu,h(\mu))=W_1^n(\mu,\mathcal{F})$ for all $\mu\in\mathcal{M}(X)$. Let $\rho=h_\ast\omega$. Then $\text{supp}\rho\subset\mathcal{F}$. Consider the map $\text{id}\times h:\mathcal{M}(X)\to\mathcal{M}(X)\times\mathcal{F}$, $\mu\mapsto(\mu,h(\mu))$, and let $\pi=(\text{id}\times h)_\ast\omega$. It is easy to check that $\pi\in\Pi(\omega,\rho)$ and $\int W_1^n(\mu,\nu)\,\mathrm{d}\pi(\mu,\nu)\leq\varepsilon$. Thus we have $D^n(\omega,\rho)\leq\varepsilon$, i.e., $Q(\omega,n,\varepsilon)\leq N$.
	
	Let $\rho\in\mathcal{M}(\mathcal{M}(X))$ be a measure such that $D^n(\omega,\rho)\leq\varepsilon$, $\text{supp}\rho=\mathcal{F}$ and $\#\mathcal{F}=Q(\omega,n,\varepsilon)$. This means that there exists $\pi\in\Pi(\omega,\rho)$ such that $\int W_1^n(\mu,\nu)\,\mathrm{d}\pi(\mu,\nu)\leq\varepsilon$. Consider a disintegration of $\pi$, that is a collection of measures $\nu_{\xi}\in\mathcal{M}(\mathcal{M}(X))$ for $\omega$-a.e. $\xi\in\mathcal{M}(X)$, such that $\pi=\int\delta_\xi\otimes\nu_\xi \,\mathrm{d}\omega(\xi)$. Since $\rho$ is one of the marginals of $\pi$, we have $\text{supp}\nu_\xi\subset\mathcal{F}$ for $\omega$-a.e. $\xi$. Therefore,
	$$\int W_1^n(\xi,\mathcal{F})\,\mathrm{d}\omega(\xi)\leq \int\int W_1^n(\xi,\eta)\,\mathrm{d}\nu_\xi(\eta)\,\mathrm{d}\omega(\xi)\leq\varepsilon,$$
	which completes the proof.
\end{proof}

\begin{prop}
	For any $\omega\in\mathcal{M}(\mathcal{M}_f^{erg}(X))$, $Q(\omega,n,\varepsilon)\leq N_\mathcal{M}(\mathcal{M}_f^{erg}(X),n,\varepsilon)$.
\end{prop}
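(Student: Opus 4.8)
The plan is to feed the minimal spanning set of $\mathcal{M}_f^{erg}(X)$ directly into the variational description of $Q(\omega,n,\varepsilon)$ supplied by the previous proposition, and to observe that a uniform covering bound automatically controls the $\omega$-averaged quantity. First I would set $N=N_\mathcal{M}(\mathcal{M}_f^{erg}(X),n,\varepsilon)$ and fix a $(n,\varepsilon)$-spanning set $\mathcal{F}=\{\mu_1,\ldots,\mu_N\}\subset\mathcal{M}(X)$ of $\mathcal{M}_f^{erg}(X)$ attaining this minimum. By the definition of an $(n,\varepsilon)$-spanning set, every $\mu\in\mathcal{M}_f^{erg}(X)$ satisfies
$$W_1^n(\mu,\mathcal{F})=\min_{1\leq i\leq N}W_1^n(\mu,\mu_i)\leq\varepsilon.$$
Note also that $\mu\mapsto W_1^n(\mu,\mathcal{F})$ is continuous, being the minimum of finitely many continuous functions on the compact space $\mathcal{M}(X)$, so the integral below is well defined.

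The key step is to integrate this pointwise estimate against $\omega$. Since $\omega\in\mathcal{M}(\mathcal{M}_f^{erg}(X))$ is a probability measure concentrated on $\mathcal{M}_f^{erg}(X)$, the bound $W_1^n(\mu,\mathcal{F})\leq\varepsilon$ holds for $\omega$-a.e.\ $\mu$, whence
$$\int W_1^n(\mu,\mathcal{F})\,\mathrm{d}\omega(\mu)\leq\int\varepsilon\,\mathrm{d}\omega(\mu)=\varepsilon.$$
Thus $\mathcal{F}$ is an admissible family of cardinality $N$ in the description of $Q(\omega,n,\varepsilon)$ given by the preceding proposition, and taking the minimum over all admissible families yields $Q(\omega,n,\varepsilon)\leq N=N_\mathcal{M}(\mathcal{M}_f^{erg}(X),n,\varepsilon)$.

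I do not expect a genuine obstacle: the content is exactly that integrating a constant upper bound against a probability measure returns that same constant, so a worst-case (uniform) covering number dominates the averaged quantization number $Q$. The only point requiring care is to apply the previous proposition first, so that $Q(\omega,n,\varepsilon)$ is expressed through the integral $\int W_1^n(\mu,\mathcal{F})\,\mathrm{d}\omega(\mu)$ rather than through the Wasserstein distance $\mathcal{W}^n(\omega,\rho)$; once this reformulation is in hand the comparison is immediate.
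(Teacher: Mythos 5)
Your proof is correct and matches the paper's argument in substance: the paper's one-line proof transports $\omega$ onto the spanning set at cost at most $\varepsilon$ (which is exactly the construction underlying the preceding proposition), while you invoke that proposition's integral characterization and integrate the uniform pointwise bound $W_1^n(\mu,\mathcal{F})\leq\varepsilon$. Same idea, just routed explicitly through the already-established reformulation.
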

\begin{proof}
	Given a $(n,\varepsilon)$-spanning set of $\mathcal{M}_f^{erg}(X)$, we can transport any $\omega\in\mathcal{M}(\mathcal{M}_f^{erg}(X))$ to a measure supported on it with a cost less than $\varepsilon$, which shows that $Q(\omega,n,\varepsilon)\leq N_\mathcal{M}(\mathcal{M}_f^{erg}(X),n,\varepsilon)$.
\end{proof}

A measure $\omega\in\mathcal{M}(\mathcal{M}_f^{erg}(X))$ is said to be the ergodic decomposition of $\mu\in\mathcal{M}_f(X)$ if $\mu=\int\eta \,\mathrm{d}\omega(\eta)$.
A trivial verification from the definitions shows that if $\omega$ is the ergodic decomposition of $\mu\in\mathcal{M}_f(X)$, then $Q(\omega,n,\varepsilon)=\mathcal{E}_\mu(n,\varepsilon)$, which implies the following proposition.
\begin{prop}\label{xiaoyu}
	For any $\mu\in\mathcal{M}_f(X)$, we have $\mathcal{E}_\mu\leq \mathcal{E}(f_{\mathcal{M}},\mathcal{M}_f^{erg}(X))$.
\end{prop}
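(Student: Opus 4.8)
The plan is to deduce the inequality directly from the two facts just recorded: the integral reformulation of $Q$, and the bound $Q(\omega,n,\varepsilon)\leq N_\mathcal{M}(\mathcal{M}_f^{erg}(X),n,\varepsilon)$ of the preceding proposition. The whole argument is then a chain of (in)equalities at each fixed pair $(n,\varepsilon)$, followed by passage to the limit.

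First I would fix $\mu\in\mathcal{M}_f(X)$ and invoke the ergodic decomposition theorem to produce a measure $\omega\in\mathcal{M}(\mathcal{M}_f^{erg}(X))$ with $\mu=\int\eta\,\mathrm{d}\omega(\eta)$. The decisive point — and really the only substantive input — is that $\omega$ is the law of the $\mu$-almost everywhere defined map $x\mapsto e^f(x)$, i.e. $\omega=(e^f)_\ast\mu$; this is where Birkhoff's ergodic theorem enters. Granting this, for any finite $\mathcal{F}\subset\mathcal{M}(X)$ the change of variables $\eta=e^f(x)$ gives
$$\int W_1^n(e^f(x),\mathcal{F})\,\mathrm{d}\mu(x)=\int W_1^n(\eta,\mathcal{F})\,\mathrm{d}\omega(\eta).$$
Hence a finite set $\mathcal{F}$ realizes cost at most $\varepsilon$ against $\mu$ in Definition \ref{eem} exactly when it does so against $\omega$ in the integral reformulation of $Q$; minimizing the cardinality of $\mathcal{F}$ on both sides yields $\mathcal{E}_\mu(n,\varepsilon)=Q(\omega,n,\varepsilon)$ for every $n$ and $\varepsilon$, which is the ``trivial verification'' the text refers to.

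Combining this identity with the previous proposition gives
$$\mathcal{E}_\mu(n,\varepsilon)=Q(\omega,n,\varepsilon)\leq N_\mathcal{M}(\mathcal{M}_f^{erg}(X),n,\varepsilon)$$
for all $n\in\mathbb{N}$ and $\varepsilon>0$. Finally I would apply $\log\log(\cdot)$, divide by $n$, and take $\limsup_{n\to\infty}$ followed by $\lim_{\varepsilon\to0}$; since these operations are monotone in the same sense used in Proposition \ref{31}(1), the inequality survives and produces exactly $\mathcal{E}_\mu\leq\mathcal{E}(f_\mathcal{M},\mathcal{M}_f^{erg}(X))$. I do not expect any genuine obstacle: everything past the identification $\omega=(e^f)_\ast\mu$ is bookkeeping with the definitions, and that identification is precisely the ergodic decomposition statement the paper has already set up.
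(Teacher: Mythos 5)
Your proposal is correct and follows exactly the route the paper takes: identify the ergodic decomposition $\omega=(e^f)_\ast\mu$, use the change of variables to get $\mathcal{E}_\mu(n,\varepsilon)=Q(\omega,n,\varepsilon)$ (this is the ``trivial verification'' the paper leaves implicit), invoke $Q(\omega,n,\varepsilon)\leq N_\mathcal{M}(\mathcal{M}_f^{erg}(X),n,\varepsilon)$, and pass to the limits. You have merely spelled out the details the paper omits, in particular the role of $(e^f)_\ast\mu$, which is the right way to make the verification precise.
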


\subsection{Dynamical emergence of $\mathcal{K}(X)$}
Similar to $\mathcal{E}(f_{\mathcal{M}})$, a set $\mathcal{G}\subset\mathcal{K}(X)$ is called a $(n,\varepsilon)$-spanning set of $\mathcal{Z}\subset\mathcal{K}(X)$ if for every $B\in\mathcal{Z}$, there exists $C\in\mathcal{G}$ such that $H^n(B,C)\leq\varepsilon$. A set $\mathcal{G}\subset\mathcal{Z}$ is called a $(n,\varepsilon)$-separated set if for distinct $B,C\in \mathcal{G}$, we have $H^n(B,C)>\varepsilon$. Denote by $N_\mathcal{K}(\mathcal{Z},n,\varepsilon)$ the smallest cardinality of the $(n,\varepsilon)$-spanning sets of $\mathcal{Z}$, and by $S_\mathcal{K}(\mathcal{Z},n,\varepsilon)$ the largest cardinality of the $(n,\varepsilon)$-separated sets of $\mathcal{Z}$. For simplicity, we will write $N_\mathcal{K}(n,\varepsilon)$ for $N_\mathcal{K}(\mathcal{K}(X),n,\varepsilon)$ and $S_\mathcal{K}(n,\varepsilon)$ for $S_\mathcal{K}(\mathcal{K}(X),n,\varepsilon)$.
\begin{defn}\rm\label{kme}
	For a  nonempty subset $\mathcal{Z}\subset\mathcal{K}(X)$ and $\varepsilon>0$, the \emph{dynamical emergence} of $\mathcal{Z}$ with scale $\varepsilon$ is defined as the sequence $\left\{ N_\mathcal{K}(\mathcal{Z},n,\varepsilon) \right\}_{n\in\mathbb{N}}$.
	In addition, let
	$$\mathcal{E}(f_\mathcal{K},\mathcal{Z})=\lim_{\varepsilon\to0}\limsup_{n\to\infty}\frac{\log\log N_\mathcal{K}(\mathcal{Z},n,\varepsilon)}{n},$$ and call it the  \emph{entropy order} of $\mathcal{Z}$.
	For simplicity, we will write $\mathcal{E}(f_{\mathcal{K}})$ for $\mathcal{E}(f_{\mathcal{K}},\mathcal{K}(X))$.
\end{defn}
\begin{rem}
	{\rm(1)}
	Actually according to the proofs of main results, we can replace {\rm limsup} by {\rm liminf} in the definitions of $\mathcal{E}(f_{\mathcal{M}})$ and $\mathcal{E}(f_{\mathcal{K}}),$ see Remark {\rm\ref{inf}}.\\
	\noindent{\rm(2)}
	It follows easily that $\mathcal{E}(f_\mathcal{K},\mathcal{Z})=\lim\limits_{\varepsilon\to0}\limsup\limits_{n\to\infty}\dfrac{\log\log S_\mathcal{K}(\mathcal{Z},n,\varepsilon)}{n}$.
\end{rem}

If $g:Y\to Y$ is a factor of $f:X\to X$ with factor map $\phi:X\to Y$, then $g_{\mathcal{K}}:\mathcal{K}(Y)\to\mathcal{K}(Y)$ is also a factor of $f_{\mathcal{K}}:\mathcal{K}(X)\to\mathcal{K}(X)$ with factor map $B\mapsto\phi(B)$, which we denote by $\phi_{\mathcal{K}}$.
The following results may be proved in  the same way as Proposition \ref{31}.

\begin{prop}
	{\rm (1)} If $\mathcal{Z}'\subset \mathcal{Z}\subset \mathcal{K}(X)$, then $\mathcal{E}(f_{\mathcal{K}},\mathcal{Z}')\leq \mathcal{E}(f_{\mathcal{K}},\mathcal{Z})$.\\
	{\rm (2)} If $g:Y\to Y$ is a factor of $f:X\to X$, then $\mathcal{E}(f_{\mathcal{K}})\geq \mathcal{E}(g_{\mathcal{K}})$. In particular, if $f$ and $g$ are topologically conjugate then $\mathcal{E}(f_{\mathcal{K}})= \mathcal{E}(g_{\mathcal{K}})$.\\
	{\rm (3)} $\mathcal{E}(f_{\mathcal{K}},\mathcal{Z})=\mathcal{E}(f_{\mathcal{K}},\overline{\mathcal{Z}})$, where $\overline{\mathcal{Z}}$ denotes the closure of $\mathcal{Z}$.
\end{prop}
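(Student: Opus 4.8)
The plan is to mirror the proof of Proposition~\ref{31}, replacing the Wasserstein--Bowen metric $W_1^n$ on $\mathcal{M}(X)$ by the Hausdorff--Bowen metric $H^n$ of (\ref{hn}) on $\mathcal{K}(X)$, and to reduce the factor statement to the already-established Theorem~\ref{K}. Throughout I would use that $d_n$ is a genuine metric on $X$ (its $i=0$ term is $d$), so that $H^n$ is a genuine metric on $\mathcal{K}(X)$ and open balls behave as usual; compactness of $\mathcal{K}(X)$ guarantees that $N_{\mathcal{K}}(\mathcal{Z},n,\varepsilon)$ is finite for every $n$ and $\varepsilon$.

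For part (1), I would observe that by Definition~\ref{kme} an $(n,\varepsilon)$-spanning set is allowed to range over all of $\mathcal{K}(X)$, so any $(n,\varepsilon)$-spanning set of $\mathcal{Z}$ is automatically one of $\mathcal{Z}'\subset\mathcal{Z}$. Hence $N_{\mathcal{K}}(\mathcal{Z}',n,\varepsilon)\leq N_{\mathcal{K}}(\mathcal{Z},n,\varepsilon)$, and applying $\tfrac{1}{n}\log\log(\cdot)$, then $\limsup_{n\to\infty}$, then $\lim_{\varepsilon\to0}$, yields the monotonicity. For part (2), I would invoke Theorem~\ref{K}: since $g$ is a factor of $f$, we get $\mathcal{E}(f_{\mathcal{K}})=h_{top}(f)\geq h_{top}(g)=\mathcal{E}(g_{\mathcal{K}})$, using the standard fact that topological entropy does not increase under passing to a factor; conjugacy gives $h_{top}(f)=h_{top}(g)$ and hence equality. (One could instead argue directly through the factor map $\phi_{\mathcal{K}}$, but routing through Theorem~\ref{K} is the shortest path, exactly as in Proposition~\ref{31}(2).)

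For part (3), I would prove $\mathcal{E}(f_{\mathcal{K}},\overline{\mathcal{Z}})\leq\mathcal{E}(f_{\mathcal{K}},\mathcal{Z})$, the reverse inequality being immediate from part (1) applied to $\mathcal{Z}\subset\overline{\mathcal{Z}}$. Writing $B(C,n,\varepsilon)$ for the open $H^n$-ball, I take a minimal $(n,\varepsilon)$-spanning set $\{C_1,\dots,C_N\}$ of $\mathcal{Z}$, so that $\mathcal{Z}\subset\bigcup_{i=1}^N B(C_i,n,\varepsilon)$. Since this union is finite, its closure is $\bigcup_{i=1}^N\overline{B(C_i,n,\varepsilon)}$, and each closed ball of radius $\varepsilon$ lies inside the open ball of radius $2\varepsilon$; hence $\overline{\mathcal{Z}}\subset\bigcup_{i=1}^N B(C_i,n,2\varepsilon)$, which gives $N_{\mathcal{K}}(\overline{\mathcal{Z}},n,2\varepsilon)\leq N_{\mathcal{K}}(\mathcal{Z},n,\varepsilon)$. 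Taking $\tfrac{1}{n}\log\log(\cdot)$, then $\limsup_{n\to\infty}$, then $\lim_{\varepsilon\to0}$ (noting $2\varepsilon\to0$ as $\varepsilon\to0$) finishes the argument.

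I do not expect a serious obstacle, as the whole proposition runs parallel to Proposition~\ref{31}. The only points that genuinely require care are verifying that $H^n$ is a bona fide metric, so that the ``closed ball of radius $\varepsilon$ inside open ball of radius $2\varepsilon$'' geometry used in part (3) is legitimate, and noting that part (2) is not self-contained but rests on the nontrivial identity $\mathcal{E}(f_{\mathcal{K}})=h_{top}(f)$ supplied by Theorem~\ref{K}.
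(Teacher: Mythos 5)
Your proposal is correct and follows essentially the same route as the paper, which simply states that the result ``may be proved in the same way as Proposition \ref{31}'': part (1) by monotonicity of spanning numbers, part (2) by routing through Theorem \ref{K} and the factor inequality for topological entropy, and part (3) by covering $\overline{\mathcal{Z}}$ with the doubled-radius balls $B(C_i,n,2\varepsilon)$. Your write-up is in fact more detailed than the paper's.
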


\section{Proofs of the main results}
In this section, we give the proofs of our main results. We shall denote $\lfloor a\rfloor$ the largest integer smaller than $a\in\mathbb{R}$.
\subsection{Proof of Theorem \ref{mh}}
%We divide the proof into two steps. We mention here that the idea of step 1 comes from \cite[Theorem A.1]{Bolley2007} and step 2 derives from \cite[Theorem 1.6]{Berger2019}.

%\noindent\textbf{Step 1.} $\mathcal{E}(f_{\mathcal{M}})\leq h_{top}(f)$.
The following formula comes from \cite[Theorem A.1]{Bolley2007}, which gives an upper bound on the number of the balls in Wasserstein distance needed to cover the space $\mathcal{M}(X)$.
\begin{thm}
	Let $(X,d)$ be a Polish space with finite diameter $D$. For any $r>0$, define $N(X,r)$ as the minimal number of balls needed to cover $X$ by balls of radius $r$. Then, for all $p\geq 1$ and $\delta\in (0,D)$, the space $\mathcal{M}(X)$ can be covered by $\mathcal{N}_p(X,\delta)$ balls of radius $\delta$ in $W_p$ distance, with $$\mathcal{N}_p(X,\delta)\leq\left( \frac{8eD}{\delta} \right)^{pN\left(X,\frac{\delta}{2}\right)}.$$
\end{thm}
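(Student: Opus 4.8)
The plan is to exhibit an explicit finite family $\mathcal{F}\subset\mathcal{M}(X)$ that is $\delta$-dense for the $W_p$ distance and then bound $\mathcal{N}_p(X,\delta)$ by $\#\mathcal{F}$. The construction has two stages: a \emph{geometric} discretization that replaces an arbitrary measure by one supported on finitely many fixed centers, followed by a \emph{mass} quantization that rounds the resulting weight vector to a point of a fixed finite net of the probability simplex. Writing $N:=N(X,\delta/2)$, the final family will be indexed by lattice points of the simplex, and a stars-and-bars count will produce the stated bound.

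For the geometric stage, I fix centers $x_1,\dots,x_N$ whose $\delta/2$-balls cover $X$ and disjointify them into a Borel partition $X=\bigsqcup_{i=1}^N A_i$ with $A_i\subset B(x_i,\delta/2)$. For $\mu\in\mathcal{M}(X)$ set $T\mu:=\sum_{i=1}^N\mu(A_i)\,\delta_{x_i}$. The transport plan that moves, inside each $A_i$, all the $\mu$-mass to $x_i$ witnesses $W_p^p(\mu,T\mu)\le\sum_i\int_{A_i}d(x,x_i)^p\,\mathrm{d}\mu\le(\delta/2)^p$, so $W_p(\mu,T\mu)\le\delta/2$. Thus it suffices to approximate every discrete measure $T\mu=\sum_i m_i\delta_{x_i}$, with weight vector $m=(m_i)$ in the simplex $\Delta_N$, to within $\delta/2$.

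For the mass stage, I would quantize $m$ to a vector $\tilde m$ with coordinates in $\frac1k\mathbb{N}_0$ summing to $1$: rounding each $m_i k$ down and redistributing the (integer, $<N$) deficit yields $\|m-\tilde m\|_1< N/k$. For two measures supported on the same centers, keeping the common mass in place and shipping the excess (of total mass $\frac12\|m-\tilde m\|_1$) across a distance at most $D$ gives $W_p^p(T\mu,\tilde\mu)\le D^p\cdot\frac12\|m-\tilde m\|_1<\frac{D^pN}{2k}$. Choosing $k=\lceil \frac{N}{2}(2D/\delta)^p\rceil$ forces this to be at most $(\delta/2)^p$, hence $W_p(T\mu,\tilde\mu)\le\delta/2$ and, by the triangle inequality, $W_p(\mu,\tilde\mu)\le\delta$. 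The number of admissible $\tilde m$, equivalently of $(a_1,\dots,a_N)\in\mathbb{N}_0^N$ with $\sum a_i=k$, is $\binom{k+N-1}{N-1}\le\big(e(k+N-1)/(N-1)\big)^{N-1}$; using $\delta<D$ and $p\ge1$ (so $(2D/\delta)^p\ge2$) to absorb the lower-order terms, a direct computation bounds this by $(8eD/\delta)^{pN}$, which is the claim.

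The routine parts are the two $W_p$ estimates, which only use that each partition piece has radius $\le\delta/2$ and that the support has diameter $\le D$. The genuine work is the quantization/counting step: choosing $k$ so that the simplex net is simultaneously fine enough (cost $\le\delta/2$) and small enough, and then tracking the constants through the binomial estimate so that they fit under the prescribed base $8eD/\delta$. This constant-chasing---rather than any conceptual difficulty---is the main obstacle.
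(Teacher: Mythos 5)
The paper does not prove this statement: it is quoted verbatim from Bolley--Guillin--Villani \cite[Theorem A.1]{Bolley2007}, so there is no internal proof to compare against. Your two-stage argument --- push each measure onto the centers of a $\delta/2$-cover via a disjointified partition (cost $\le\delta/2$), then quantize the weight vector on the $\tfrac1k$-lattice of the simplex with $k=\lceil\tfrac{N}{2}(2D/\delta)^p\rceil$ and count lattice points --- is correct and is essentially the argument of that reference; the constant-chasing does close (for $N\ge2$ one gets $\binom{k+N-1}{N-1}\le\bigl(e\bigl((2D/\delta)^p+2\bigr)\bigr)^{N-1}\le(4eD/\delta)^{pN}\le(8eD/\delta)^{pN}$ using $\delta<D$ and $p\ge1$, and the case $N=1$ is trivial since the simplex is a point).
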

According to above theorem, we have for any $\varepsilon>0$, there exists a constant $C>0$ such that $$N_\mathcal{M}(n,\varepsilon )\leq
\big( C/\varepsilon \big)^{N(f,\,n,\,{\varepsilon}/{2})}.$$
So,
\begin{align*}
\frac{\log\log N_\mathcal{M}(n,\varepsilon)}{n}
\leq
\frac{\log N(f,n,\varepsilon/2)}{n}
+\frac{\log\log(C/\varepsilon)}{n},
\end{align*}
and hence
$$\limsup_{n\to\infty}\frac{\log\log N_\mathcal{M}(n,\varepsilon)}{n}
\leq
\limsup_{n\to\infty}\frac{\log N(f,n,\varepsilon/2)}{n}.$$
Now taking the limit as $\varepsilon\to0$, we obtain $\mathcal{E}(f_{\mathcal{M}})\leq  h_{top}(f)$.

Now we turn to prove $\mathcal{E}(f_{\mathcal{M}})\geq h_{top}(f)$, our proof is adapted from \cite[Theorem 1.6]{Berger2019}.

For $n\in\mathbb{N}$ and $\varepsilon>0$, we say that $\mu,\nu\in\mathcal{M}(X)$ are $(n,\varepsilon)$-apart if $$\min\left\{d_n(x,y): x\in {\rm supp}(\mu), y\in {\rm supp}(\nu)\right\}\geq\varepsilon.$$ A nonempty subset $\mathcal{A}\subset\mathcal{M}(X)$ is called convex if for any $\mu_1,\ldots,\mu_n\in\mathcal{A}$, we have $\sum_{\i=1}^na_i\mu_i\in\mathcal{A}$, where $0\leq a_i\leq1$ for $1\leq i\leq n$ and $\sum_{i=1}^na_i=1$. Denote by $A(\mathcal{A},n,\varepsilon)$ the maximal number of pairwise $(n,\varepsilon)$-apart measures in $\mathcal{A}\subset\mathcal{M}(X)$ and, for simplicity, write $A(n,\varepsilon)$ for $A(\mathcal{M}(X),n,\varepsilon)$ .

\begin{lem}{\rm(Berstein inequality \cite{Grimmett2001})}\label{berstein}
	Let $H_n$ be the number of heads in $n$ tosses of a fair coin. Then for any $\delta>0$, $${\rm Prob}\Big(\frac{H_n}{n}\leq\frac{1}{2}-\delta\Big)\leq e^{-\frac{\pi}{4}\delta^2n}.$$
\end{lem}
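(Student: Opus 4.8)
The plan is to treat this as a standard Chernoff-type (exponential Markov) concentration bound. Write $H_n=\sum_{i=1}^n X_i$, where $X_1,\dots,X_n$ are independent with ${\rm Prob}(X_i=1)={\rm Prob}(X_i=0)=\tfrac12$, so $\mathbb{E}[H_n]=n/2$; by symmetry of the fair coin the lower tail coincides with the upper tail, but it is cleanest to work with the lower tail directly. First I would fix $\lambda>0$ and apply the exponential Markov inequality to $e^{-\lambda H_n}$:
$${\rm Prob}\Big(\tfrac{H_n}{n}\le\tfrac12-\delta\Big)={\rm Prob}\big(e^{-\lambda H_n}\ge e^{-\lambda(\frac12-\delta)n}\big)\le e^{\lambda(\frac12-\delta)n}\,\mathbb{E}\big[e^{-\lambda H_n}\big].$$

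Next, using independence and the explicit law of each $X_i$, I would compute the moment generating function and absorb the prefactor, namely
$$\mathbb{E}\big[e^{-\lambda H_n}\big]=\Big(\tfrac{1+e^{-\lambda}}{2}\Big)^n,\qquad e^{\lambda n/2}\,\mathbb{E}\big[e^{-\lambda H_n}\big]=\Big(\cosh\tfrac{\lambda}{2}\Big)^n,$$
so that the right-hand side above equals $\big(e^{-\lambda\delta}\cosh\tfrac{\lambda}{2}\big)^n$. The task then reduces to an elementary scalar estimate. I would invoke the standard inequality $\cosh t\le e^{t^2/2}$ (immediate from comparing Taylor coefficients), giving $\cosh\tfrac{\lambda}{2}\le e^{\lambda^2/8}$ and hence the bound $e^{(-\lambda\delta+\lambda^2/8)n}$. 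Optimizing the quadratic exponent in $\lambda$ by taking $\lambda=4\delta$ yields ${\rm Prob}(H_n/n\le\tfrac12-\delta)\le e^{-2\delta^2 n}$.

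Finally, since $2>\pi/4$, the bound just obtained is strictly stronger than the asserted one: $e^{-2\delta^2 n}\le e^{-\frac{\pi}{4}\delta^2 n}$ for every $\delta>0$ and $n\in\mathbb{N}$, which is exactly the claim. I do not expect any genuine obstacle, as the inequality is classical (the cited reference is Grimmett--Stirzaker); the only point worth flagging is that the constant $\pi/4$ is deliberately non-optimal, so one never needs the sharp Cram\'er rate function, and any clean exponential-moment bound such as the one above already suffices. If one instead wanted the sharp constant, one would retain the exact rate $\sup_{\lambda>0}\big(\lambda\delta-\log\cosh\tfrac{\lambda}{2}\big)$ rather than the quadratic surrogate, but this refinement is unnecessary for the stated estimate.
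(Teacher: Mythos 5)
Your proof is correct. The paper does not actually prove this lemma --- it is quoted from Grimmett--Stirzaker and used as a black box in the proof of Lemma 4.2 --- so you have supplied a complete argument where the authors rely on a citation. Your route is the standard Chernoff/Hoeffding one: exponential Markov applied to $e^{-\lambda H_n}$, the exact moment generating function $\bigl(\cosh\tfrac{\lambda}{2}\bigr)^n$ after absorbing the prefactor, the elementary bound $\cosh t\le e^{t^2/2}$, and optimization at $\lambda=4\delta$. All steps check out, and you correctly observe that the resulting bound $e^{-2\delta^2 n}$ is stronger than the stated $e^{-\frac{\pi}{4}\delta^2 n}$ since $2>\pi/4$; the weaker constant is all that is needed downstream (only the exponential-in-$N$ decay of $\#U$ matters in Lemma 4.2), so nothing is lost.
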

\begin{lem}\label{ky}
	Let $\mathcal{A}$ be a convex subset of $\mathcal{M}(X)$. Then $$\mathcal{E}(f_{\mathcal{M}},\mathcal{A})\geq\lim_{\varepsilon\to0}\limsup_{n\to\infty}\frac{\log A(\mathcal{A},n,\varepsilon)}{n}.$$
\end{lem}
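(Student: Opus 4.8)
The plan is to bound the separation number $S_\mathcal{M}(\mathcal{A},n,\cdot)$ from below by a doubly exponential function of $A(\mathcal{A},n,\varepsilon)$, exploiting convexity to manufacture exponentially many well-separated convex combinations of the apart measures. Fix $n$ and $\varepsilon$, set $A=A(\mathcal{A},n,\varepsilon)$, and choose $\mu_1,\dots,\mu_A\in\mathcal{A}$ that are pairwise $(n,\varepsilon)$-apart, so that the supports $\mathrm{supp}(\mu_i)$ are pairwise at $d_n$-distance $\geq\varepsilon$. Writing $k=\lfloor A/2\rfloor$, I would group the indices into pairs $\{1,2\},\{3,4\},\dots$ and, for each word $\omega\in\{0,1\}^{k}$, set $\nu_\omega=\frac1k\sum_{l=1}^{k}\mu_{c_l(\omega)}$, where $c_l(\omega)=2l-1$ if $\omega_l=0$ and $c_l(\omega)=2l$ if $\omega_l=1$. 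Each $\nu_\omega$ is a convex combination of the $\mu_i$, hence lies in $\mathcal{A}$ by convexity, and this is the only place convexity is used.

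The first key step is the transport estimate
$$W_1^n(\nu_\omega,\nu_{\omega'})\geq \frac{\varepsilon}{k}\,d_H(\omega,\omega'),$$
where $d_H$ denotes Hamming distance. I would let $U_i$ be the open $(\varepsilon/2)$-neighbourhood of $\mathrm{supp}(\mu_i)$ in the metric $d_n$; since the supports are pairwise at distance $\geq\varepsilon$, these neighbourhoods are pairwise disjoint. For an index $l$ with $\omega_l\neq\omega'_l$, the mass $1/k$ that $\nu_\omega$ places on $\mathrm{supp}(\mu_{c_l(\omega)})\subset U_{c_l(\omega)}$ cannot be matched inside $U_{c_l(\omega)}$, because every atom of $\nu_{\omega'}$ is a $\mu_c$ with $c\neq c_l(\omega)$ and therefore has support disjoint from $U_{c_l(\omega)}$ and at $d_n$-distance $\geq\varepsilon$ from $\mathrm{supp}(\mu_{c_l(\omega)})$. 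Bounding the cost of any transport plan below by $\int_{U_{c_l(\omega)}\times X}d_n\,\mathrm{d}\pi\geq\varepsilon\cdot\frac1k$ on each of the disjoint source regions $U_{c_l(\omega)}$, one index per disagreement, and summing gives the displayed bound.

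Next I would produce many pairwise Hamming-far words by the probabilistic deletion method, which is where Bernstein's inequality (Lemma \ref{berstein}) enters. For two independent uniform words $\omega,\omega'\in\{0,1\}^k$, the quantity $d_H(\omega,\omega')$ is the number of heads in $k$ fair tosses, so Lemma \ref{berstein} gives $\mathrm{Prob}\big(d_H(\omega,\omega')\leq(\tfrac12-\delta)k\big)\leq e^{-\frac{\pi}{4}\delta^2 k}$. Taking $M=\lceil e^{\frac{\pi}{8}\delta^2 k}\rceil$ independent uniform words, the expected number of pairs violating $d_H>(\tfrac12-\delta)k$ is at most $\binom{M}{2}e^{-\frac{\pi}{4}\delta^2 k}<M/2$, so deleting one word from each offending pair leaves a family of at least $M/2$ words that are pairwise Hamming-separated by more than $(\tfrac12-\delta)k$. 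Choosing $\delta=\tfrac14$, the associated measures $\nu_\omega$ are pairwise separated by more than $\frac{\varepsilon}{k}\cdot\frac{k}{4}=\varepsilon/4$ in $W_1^n$, whence $S_\mathcal{M}(\mathcal{A},n,\varepsilon/4)\geq \tfrac12\,e^{cA}$ for a constant $c>0$ (absorbing $k\sim A/2$).

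Finally, applying $\log\log$ and dividing by $n$ yields $\frac{\log\log S_\mathcal{M}(\mathcal{A},n,\varepsilon/4)}{n}\geq \frac{\log A(\mathcal{A},n,\varepsilon)+O(1)}{n}$ whenever $A$ is large (and when $A$ stays bounded the right-hand term tends to $0$, so the inequality is trivial there). Taking $\limsup_{n\to\infty}$ annihilates the $O(1)/n$ contribution, and then letting $\varepsilon\to0$ (so $\varepsilon/4\to0$) together with the separated-set characterisation of $\mathcal{E}(f_\mathcal{M},\mathcal{A})$ recorded after Definition \ref{mem} gives the claim. I expect the main obstacle to be the transport estimate itself—rigorously arguing that the disjoint neighbourhoods force each unit of disagreement mass to travel $d_n$-distance $\geq\varepsilon$ and that these costs genuinely add over a single optimal plan—while the Bernstein/deletion packing and the bookkeeping of the outer double logarithm are comparatively routine.
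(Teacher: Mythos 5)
Your proof is correct and follows essentially the same route as the paper: convex combinations of the pairwise $(n,\varepsilon)$-apart measures indexed by binary words, an exponentially large Hamming-separated code extracted via Bernstein's inequality, and a transport lower bound converting Hamming separation into $W_1^n$-separation of order $\varepsilon/4$, finishing with the separated-set characterisation of $\mathcal{E}(f_\mathcal{M},\cdot)$. The differences are only combinatorial bookkeeping: the paper uses balanced words on all $N$ indices and a ball-volume (Gilbert--Varshamov type) count where you pair up indices and use probabilistic deletion, and your estimate $W_1^n(\nu_\omega,\nu_{\omega'})\geq\frac{\varepsilon}{k}\,d_H(\omega,\omega')$ is a slightly sharper form of the paper's observation that mass at least $1/4$ must be transported a $d_n$-distance at least $\varepsilon$.
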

\begin{proof}
	For $\varepsilon>0$ and $n\in\mathbb{N}$, let $N:=8\lfloor \frac{A(\mathcal{A},n,\varepsilon)}{8} \rfloor$. Then we have $A(\mathcal{A},n,\varepsilon)-7\leq N\leq A(\mathcal{A},n,\varepsilon)$, so we can find $\nu_1,\ldots,\nu_N\in\mathcal{A}$ that are pairwise $(n,\varepsilon)$-apart. Define $$F:=\left\{ \phi:\{1£¬\ldots, N\}\to\{0,1\}:\sum_{i=1}^N \phi(i)=\frac{N}{2}\right\}$$ endowed with the Hamming distance $${\rm Hamm}(\phi_1,\phi_2):=\#\big\{i\in\{1,\ldots,N\}:\phi_1(i)\neq\phi_2(i) \big\}.$$ Note that the distance is always an even number and $\# F=\dbinom{N}{N/2}\geq (2N)^{-\frac{1}{2}}2^N$ (by Stirling's formula). Let $\phi\in F$ and $U$ be the $\frac{N}{4}$-neighbourhood of $\phi$.
	For $\phi'\in U$, set $k:={\rm Hamm}(\phi,\phi')\leq\frac{N}{8}$. Then there exist $k$ elements of $\phi^{-1}(\{1\})$ and $k$ elements of $\phi^{-1}(\{0\})$ at which $\phi$
	differs from $\phi'$, i.e., $\phi'$ corresponds to $k$ elements in $\phi^{-1}(\{1\})$ and $k$ elements in $\phi^{-1}(\{0\})$. On the other hand, $\#\phi^{-1}(\{1\})=\#\phi^{-1}(\{0\})=\frac{N}{2}$. Therefore,
	$$\# U=\sum_{k=0}^{N/8}\dbinom{N/2}{k}^2\leq  \left[\sum_{k=0}^{N/8}\dbinom{N/2}{k}\right]^2.$$
	
	Moreover, $\sum_{k=0}^{N/8}\dbinom{N/2}{k}$ equals $2^{N/2}$ times the probability of the number of heads in $\frac{N}{2}$ tosses of a fair coin is less than $\frac{N}{8}$. According to Lemma \ref{berstein}, we have
	$$\# U\leq\left[\sum_{k=0}^{N/8}\dbinom{N/2}{k}\right]^2\leq \big(2^{N/2}e^{-\frac{\pi}{4}\frac{N}{16}\frac{1}{2}} \big)^2=2^Ne^{-\frac{\pi}{4}\frac{N}{16}}.$$
	
	Let $F'$ be a $\frac{N}{4}$-separated set of $F$ with maximal cardinality. Then $$\# F'\geq\frac{\# F}{\# U}\geq(2N)^{-\frac{1}{2}}2^N2^{-N}e^{\frac{\pi}{4}\frac{N}{16}}=(2N)^{-\frac{1}{2}}e^{\pi\frac{N}{64}},$$
	and, for large enough $N$, we have $\# F'\geq e^{cN}$, where $c>0$ is a constant.

	For each $\phi\in F'$, consider the measure $$\mu_{\phi}:=\frac{2}{N}\sum_{i=1}^N\phi(i)\nu_i.$$ Let $\mathcal{F}:=\{\mu_{\phi}:\phi\in F' \}.$ Note that $\mathcal{F}\subset\mathcal{A}$ as $\mathcal{A}$ is convex.  For two distinct elements $\phi_1,\phi_2\in F'$, let $S_1$ and $S_2$ be the support sets of $\mu_{\phi_2}$ and $\mu_{\phi_2}$ respectively.
	For $(x,y)\in(S_1\setminus S_2)\times S_2$, there exists $j\in\{1,\ldots,N \}$ such that $\phi_2(j)=1$ and $\phi_1(j)=0$. So $d_n(x,y)\geq\varepsilon$ as $\nu_1,\ldots,\nu_n$ are pairwise $(n,\varepsilon)$-apart. Moreover,
	$$\mu_{\phi_1}(S_1\setminus S_2)=\frac{2}{N}\#\big\{i\in\{1,\ldots,N\}:\phi_1(i)=1,\phi_2(i)=0 \big\}=\frac{{\rm Hamm}(\phi_1,\phi_2)}{N}\geq\frac{1}{4}.$$ Therefore for any $\pi\in\Pi(\mu_{\phi_1},\mu_{\phi_2})$,
	\begin{align*}
	\int_{X\times X}d_n(x,y)d\pi(x,y)&\geq \int_{(S_1\setminus S_2)\times S_2}d_n(x,y)d\pi(x,y)\\
	&\geq\varepsilon \pi[(S_1\setminus S_2)\times S_2]=\varepsilon\mu_{\phi_1}(S_1\setminus S_2)\geq\frac{\varepsilon}{4}.
	\end{align*}
	We conclude that $W_1^n(\mu_{\phi_1},\mu_{\phi_2})\geq\frac{\varepsilon}{4}$, i.e., $\mathcal{F}$ is a $(n,\varepsilon/4)$-separated set. So
	$$S_\mathcal{M}(\mathcal{A},n,\frac{\varepsilon}{4})\geq e^{cN},$$
	and therefore,
	\begin{equation}\label{minf}
		\frac{\log\log S_\mathcal{M}(\mathcal{A},n,\frac{\varepsilon}{4})}{n}\geq \frac{\log c+\log (A(\mathcal{A},n,\varepsilon)-7)}{n}.
	\end{equation}
	Now taking the limsup as $n\to\infty$ and the limit as $\varepsilon\to0$, which completes the proof.
\end{proof}

Observe that if $\{x_1,\ldots,x_l\}$ is $(n,\varepsilon)$-separated, then $\{\delta_{x_1},\ldots,\delta_{x_l}\}$ is pairwise $(n,\varepsilon)$-apart, and therefore, $$S(f,n,\varepsilon)\leq A(n,\varepsilon).$$  Let $\mathcal{A}$ be $\mathcal{M}(X)$ in Lemma \ref{ky}, we get $\mathcal{E}(f_{\mathcal{M}})\geq h_{top}(f).$

\subsection{Proof of Theorem \ref{expansive}}
It suffices to prove that $\mathcal{E}(f_{\mathcal{M}},\mathcal{M}_f^{per}(X))\geq h_{top}(f)$.
Let $\rho$ be the expansive constant of $f$. For $\delta>0$, we have
$$S(f,n,\varepsilon)\geq e^{n(h_{top}(f)-\delta)}$$
for some $\varepsilon>0$ and large enough $n\in\mathbb{N}$. 
Let $F_n\subset X$ be an $(n,\varepsilon)$-separated set with $\# F_n=S(f,n,\varepsilon)$. 
According to specification property and Lemma \ref{sp+exp}, there exists $n_0$ (depending on $\varepsilon$) such that every $x\in F_n$ is shadowed by an $(n+n_0)$-periodic point $y\in X$ in the sense that $d_n(x,y)<\varepsilon/2$. Denote by $G_n$ the set of periodic points obtained in this way. Then $\# G_n=\# F_n$.

Let $\Pi_n=\{f^k(y):y\in G_n,k\in\mathbb{N} \}$ be the union of orbits of the points in $G_n$.
For distinct $y,z\in\Pi_n$, by expansiveness and periodicity, there exists $0\leq k<n+n_0$ such that $d(f^k(y),f^k(z))>\rho$, i.e., $y,z$ are $(n+n_0,\rho)$-separated. So any two distinct ergodic invariant measures supported in $\Pi_n$ are $(n+n_0,\rho)$-apart. On the other hand, the number of ergodic measures supported in $\Pi_n$, denoted by $A_n$, satisfies $$A_n\geq\frac{\# G_n}{n+n_0}=\frac{\# F_n}{n+n_0}\geq e^{n(h_{top}(f)-2\delta)},$$ for large enough $n$.
Moreover, we have $$A(\mathcal{M}_f(X),n+n_0,\rho)\geq A_n\geq e^{n(h_{top}(f)-2\delta)}.$$
Therefore,
$$\frac{\log A(\mathcal{M}_f(X),n+n_0,\rho)}{n}\geq h_{top}(f)-2\delta.$$
Letting $n\to\infty$ and noting that $A(\mathcal{M}_f(X),n,\eta)$ increases as $\eta\to0$, we obtain
$$\lim_{\eta\to0}\limsup_{n\to\infty}\frac{\log A(\mathcal{M}_f(X),n,\eta)}{n}\geq h_{top}(f)-2\delta.$$
Combining Lemma \ref{ky} and the fact that $\delta$ was arbitrary,
$$\mathcal{E}(f_{\mathcal{M}},\mathcal{M}_f(X))\geq h_{top}(f).$$

Note that $\overline{\mathcal{M}_f^{per}(X)}=\mathcal{M}_f(X)$ (Theorem 1 in \cite{Sigmund1974}). So, according to Proposition \ref{31} (3), we conclude that $$\mathcal{E}(f_{\mathcal{M}},\mathcal{M}_f^{per}(X))\geq h_{top}(f),$$
which completes the proof.

\subsection{Proof of Theorem \ref{K}}
For $n\in\mathbb{N}$ and $\varepsilon>0$, let $E=\{x_1,\ldots,x_{N(f,\,n,\,\varepsilon)} \}\subset X$ be the $(n,\varepsilon)$-spanning set with the smallest cardinality. Set $\mathcal{G}=2^E\setminus\{\emptyset \}\subset\mathcal{K}(X)$, i.e., the collection of all nonempty subsets of $E$. Then $\mathcal{G}$ is a $(n,\varepsilon)$-spanning set for $f_{\mathcal{K}}$. Indeed, for any $B\in\mathcal{K}(X)$, if $C=\{x\in E: B\cap B_{n}(x,\varepsilon)\neq\emptyset \}\in\mathcal{G},$ then $H^n(B,C)\leq\varepsilon$. This implies $N_\mathcal{K}(n,\varepsilon)\leq 2^{N(f,\,n,\,\varepsilon)},$ thus
$$\frac{\log\log N_\mathcal{K}(n,\varepsilon)}{n}\leq
\frac{\log N(f,n,\varepsilon)+\log\log2}{n},$$
and so we conclude that $\mathcal{E}(f_{\mathcal{K}})\leq h_{top}(f)$.

On the other hand, let $N:=8\lfloor\frac{S(f,\,n,\,\varepsilon)}{8} \rfloor$. Then $S(f,n,\varepsilon)-7\leq N\leq S(f,n,\varepsilon)$. We can find a set $E=\{x_1,\ldots,x_N \}\subset X$
is $(n,\varepsilon)$-separated. Let $F$ and $F'$ be  the sets in the proof of  Lemma \ref{ky}, where $F'$ is a $\frac{N}{4}$-separated set of $F$ with $\# F\geq e^{cN}$ for large enough $N$ and $c>0$ is a constant.

For each $\phi\in F'$, write $B_{\phi}=\{x_i:\phi(i)=1 \}\in\mathcal{K}(X)$ and $\mathcal{B}=\{B_{\phi}:\phi\in F' \}\subset \mathcal{K}(X)$. It is easy to check that $\mathcal{B}$ is a $(n,\varepsilon)$-separated set for $f_{\mathcal{K}}$. Then $S_\mathcal{K}(n,\varepsilon)\geq e^{cN}\geq e^{c(S(f,\,n,\,\varepsilon)-7)}$, and thus
\begin{equation}\label{kinf}
	\dfrac{\log\log S_\mathcal{K}(n,\varepsilon)}{n}\geq
	\dfrac{\log c+\log(S(f,n,\varepsilon)-7)}{n}.
\end{equation}
Now we can conclude that $\mathcal{E}(f_{\mathcal{K}})\geq h_{top}(f)$.
\begin{rem}\label{inf}
	{\rm (1)}
	From been proved, formulas {\rm(\ref{minf})} and {\rm(\ref{kinf})} hold for arbitrary $n\in\mathbb{N}$ large enough, therefore we have
	\begin{align*}
	\mathcal{E}(f_{\mathcal{M}})&=\lim_{\varepsilon\to0}\liminf_{n\to\infty}\dfrac{\log\log N_\mathcal{M}(n,\varepsilon)}{n}=\lim_{\varepsilon\to0}\liminf_{n\to\infty}\dfrac{\log\log S_\mathcal{M}(n,\varepsilon)}{n},
	\\
	\mathcal{E}(f_{\mathcal{K}})
	&=
	\lim_{\varepsilon\to0}\liminf_{n\to\infty}\frac{\log\log N_\mathcal{K}(n,\varepsilon)}{n}
	=
	\lim_{\varepsilon\to0}\liminf_{n\to\infty}\frac{\log\log S_\mathcal{K}(n,\varepsilon)}{n}.
	\end{align*}\\
	{\rm (2)} Consider the mean metrics for $(X,f)$, which defined by 
	$$\hat{d}_n(x,y)=\frac{1}{n}\sum_{i=0}^{n-1}d(f^ix,f^iy).$$
	It was shown by Gr\"{o}ger and J\"{a}ger {\rm\cite{Groger2016}} that the topological entropy defined by mean metrics is equivalent to the topological entropy defined by Bowen metrics. Indeed we can apply our argument again, Theorem 1.2 and 1.3 still hold with Bowen metrics replaced by mean metrics in {\rm(\ref{wn})} and {\rm(\ref{hn})}.
\end{rem}

\subsection{Proof of Theorem \ref{Ex}}

We say a nonempty subset $\mathcal{A}\subset\mathcal{K}(X)$ satisfies the finite union property if for $B, C\in\mathcal{A}$, we have $B\cup C\in\mathcal{A}$. For $n\in\mathbb{N}$ and $\varepsilon>0$, we say that $B, C\in\mathcal{K}(X)$ are $(n,\varepsilon)$-split if $$\min\{d_n(x,y) :x\in B, y\in C\}>\varepsilon,$$ i.e., $d_n(B,C)>\varepsilon$. Denote by $B(\mathcal{A},n,\varepsilon)$ the maximal number of pairwise $(n,\varepsilon)$-split elements in $\mathcal{A}\subset\mathcal{K}(X)$.
\begin{lem}
	Let $(X,f)$ be a topological dynamical system and $\mathcal{A}$ a subset of $\mathcal{K}(X)$ satisfying the finite union property. Then we have $$\mathcal{E}(f_{\mathcal{K}},\mathcal{A})\geq\lim_{\varepsilon\to0}\limsup_{n\to\infty}\frac{\log B(\mathcal{A},n,\varepsilon)}{n}.$$
\end{lem}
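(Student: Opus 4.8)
The plan is to run the coding argument of Lemma~\ref{ky} and the second half of the proof of Theorem~\ref{K} in the hyperspace, using the finite union property in place of convexity: it is precisely what guarantees that the closed sets we manufacture by coding remain inside $\mathcal{A}$. Concretely, fix $\varepsilon>0$ and $n\in\mathbb{N}$ and set $N:=8\lfloor B(\mathcal{A},n,\varepsilon)/8\rfloor$, so that $B(\mathcal{A},n,\varepsilon)-7\leq N\leq B(\mathcal{A},n,\varepsilon)$. By the definition of $B(\mathcal{A},n,\varepsilon)$ I may pick $B_1,\ldots,B_N\in\mathcal{A}$ that are pairwise $(n,\varepsilon)$-split.

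For the combinatorics I reuse verbatim the index sets from Lemma~\ref{ky}: the family $F=\{\phi:\{1,\ldots,N\}\to\{0,1\}:\sum_i\phi(i)=N/2\}$ equipped with the Hamming distance, and a maximal $\tfrac{N}{4}$-separated subset $F'\subset F$. Stirling's formula together with the Bernstein inequality (Lemma~\ref{berstein}) bounds $\#U$ for the $\tfrac{N}{4}$-neighbourhood of each $\phi$ and yields $\#F'\geq e^{cN}$ for a constant $c>0$ and all large $N$; since this estimate is identical to the one already carried out, I would cite it rather than repeat the computation.

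The genuinely new ingredient is the geometric separation. For $\phi\in F'$ I set $B_\phi:=\bigcup_{i:\phi(i)=1}B_i$; this is a nonempty finite union of elements of $\mathcal{K}(X)$ (nonempty because $\sum_i\phi(i)=N/2\geq1$), hence $B_\phi\in\mathcal{K}(X)$, and the finite union property gives $B_\phi\in\mathcal{A}$. Put $\mathcal{B}:=\{B_\phi:\phi\in F'\}$. For distinct $\phi_1,\phi_2\in F'$ we have $\mathrm{Hamm}(\phi_1,\phi_2)\geq N/4>0$, and since both functions equal $1$ exactly $N/2$ times there is an index $j$ with $\phi_1(j)=1$, $\phi_2(j)=0$. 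Then $B_j\subset B_{\phi_1}$, while every $B_i$ entering $B_{\phi_2}$ is $(n,\varepsilon)$-split from $B_j$; by compactness $\min\{d_n(x,y):x\in B_j,\,y\in B_{\phi_2}\}>\varepsilon$, so some $x\in B_j\subset B_{\phi_1}$ satisfies $d_n(x,B_{\phi_2})>\varepsilon$, whence $B_{\phi_1}\not\subset(B_{\phi_2})_n^{\varepsilon}$ and, by the definition~(\ref{hn}) of $H^n$, $H^n(B_{\phi_1},B_{\phi_2})>\varepsilon$. Thus $\mathcal{B}$ is an $(n,\varepsilon)$-separated subset of $\mathcal{A}$, giving $S_\mathcal{K}(\mathcal{A},n,\varepsilon)\geq\#F'\geq e^{c(B(\mathcal{A},n,\varepsilon)-7)}$. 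Taking logarithms twice,
\[
\frac{\log\log S_\mathcal{K}(\mathcal{A},n,\varepsilon)}{n}\geq\frac{\log c+\log\bigl(B(\mathcal{A},n,\varepsilon)-7\bigr)}{n},
\]
and letting $n\to\infty$ and then $\varepsilon\to0$, combined with the separated-set formulation of the entropy order, yields the inequality. I do not expect a deep obstacle here: the feature making the hyperspace case cleaner than Lemma~\ref{ky} is that the Hausdorff-type metric $H^n$ is governed by a single worst point, so one witnessing coordinate $j$ forces full $(n,\varepsilon)$-separation with no loss of a constant factor (unlike the $\varepsilon/4$ loss incurred by averaging measures). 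The only care needed is bookkeeping: nonemptiness and closedness of $B_\phi$, its membership in $\mathcal{A}$ via the finite union property, and the existence of the witnessing index $j$.
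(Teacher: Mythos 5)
Your proposal is correct and follows essentially the same route as the paper's own proof: both reuse the sets $F$ and $F'$ from Lemma \ref{ky}, form the unions $B_\phi=\bigcup_{\phi(i)=1}B_i$ (kept inside $\mathcal{A}$ by the finite union property), and use a witnessing index $j$ with $\phi_1(j)=1$, $\phi_2(j)=0$ to show $B_{\phi_1}\not\subset(B_{\phi_2})_n^{\varepsilon}$, hence $(n,\varepsilon)$-separation and $S_\mathcal{K}(\mathcal{A},n,\varepsilon)\geq e^{cN}$. Your write-up is in fact slightly more careful than the paper's (which contains a typo $(B_{\phi_1})_n^{\varepsilon}$ for $(B_{\phi_2})_n^{\varepsilon}$ and omits the final limit bookkeeping), but there is no substantive difference in approach.
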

\begin{proof}
	Fix $n\in\mathbb{N}$ and $\varepsilon>0$. Let $N:=8\lfloor\frac{B(\mathcal{A},\,n,\,\varepsilon)}{8}\rfloor$ and choose $B_1,\ldots,B_{N}\in\mathcal{A}$\ that are pointwise $(n,\varepsilon)$-split. Let $F$ and $F'$ be  the sets in the proof of  Lemma \ref{ky} with $\# F\geq e^{cN}$ for large enough $N$ and $c>0$ is a constant.
	
	For any $\phi\in F$, set $B_{\phi}=\cup_{\phi(i)=1}B_i\in\mathcal{A}$ and $\mathcal{B}=\{B_{\phi}:\phi\in F' \}$. For distinct $\phi_1,\phi_2\in F'$, there exists an $i$ such that $B_i\subset B_{\phi_1}\setminus B_{\phi_2}$. Then $d_n(B_j,B_i)>\varepsilon$ for any $j$ with $\phi_2(j)=1$. This implies that $B_{\phi_1}\nsubseteq (B_{\phi_1})_n^{\varepsilon}$, therefore $\mathcal{B}\subset\mathcal{K}(X)$ is a $(n,\varepsilon)$-separated set of $\mathcal{A}$. Then $S_\mathcal{K}(\mathcal{A},n,\varepsilon)\geq e^{cN}$, which completes the proof.
\end{proof}
We omit the rest of the proof, which from this point follows that of Theorem 1.4 with small changes, noting that $\mathcal{K}_f(X)\subset\mathcal{K}(X)$ satisfies the finite union property.

\subsection{Proof of Theorem \ref{variation}}
By Proposition \ref{xiaoyu}, it is sufficient to show the existence of a measure $\mu\in\mathcal{M}_f(X)$ such that $\mathcal{E}_\mu\geq \mathcal{E}(f_{\mathcal{M}},\mathcal{M}_f^{erg}(X)).$

Fix $\varepsilon>0$ and $n\in\mathbb{N}$, let $\mathcal{F}_n$ be the $(n,4\varepsilon)$ separated set of $\mathcal{M}_f^{erg}(X)$ with the largest cardinality, $c_n:=\#\mathcal{F}_n=S_\mathcal{M}(\mathcal{M}_f^{erg}(X),n,4\varepsilon)$. Denote by $\omega_n\in\mathcal{M}(\mathcal{M}_f^{erg}(X))$ the equidistributed measure with support on $\mathcal{F}_n$, i.e., $$\omega_n=\frac{1}{c_n}\sum_{\mu\in\mathcal{F}_n}\delta_\mu.$$
\begin{lem}{\rm \cite[Lemma 3.19]{Berger2019}}
    Let $\rho\in\mathcal{M}(\mathcal{M}(X))$ be any probability measure whose support has cardinality $m_n:=\lceil \frac{c_n}{2} \rceil$. Then $$\mathcal{W}^n(\omega_n,\rho)\geq\frac{c_n-m_n+1}{c_n}\cdot\frac{4\varepsilon}{2}.$$
\end{lem}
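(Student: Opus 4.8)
The plan is to bound from below the cost of an arbitrary transport plan $\Pi\in\Pi(\omega_n,\rho)$ and only at the end take the infimum defining $\mathcal{W}^n$. Write $\mathcal{F}_n=\{\mu_1,\dots,\mu_{c_n}\}$ and $S=\mathrm{supp}\,\rho$, so $|S|=m_n$. Because $\omega_n$ is the uniform measure on the atoms $\mu_i$, any plan disintegrates as $\Pi=\frac1{c_n}\sum_{i=1}^{c_n}\delta_{\mu_i}\otimes\pi_i$ with each $\pi_i\in\mathcal{M}(S)$, and its cost equals $\frac1{c_n}\sum_i\int_S W_1^n(\mu_i,\nu)\,\mathrm{d}\pi_i(\nu)$. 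My first move is the reduction
$$\int_S W_1^n(\mu_i,\nu)\,\mathrm{d}\pi_i(\nu)\ \ge\ a_i:=W_1^n(\mu_i,S)=\min_{\nu\in S}W_1^n(\mu_i,\nu),$$
valid since $\pi_i$ is a probability measure supported on $S$. This disposes of non-deterministic plans in one line: it now suffices to prove $\sum_{i=1}^{c_n}a_i\ge (c_n-m_n+1)\,2\varepsilon$, after which dividing by $c_n$ and recalling $2\varepsilon=\tfrac{4\varepsilon}{2}$ gives the claim.

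The geometric input is that the closed $W_1^n$-balls of radius $2\varepsilon$ about the $\mu_i$ are pairwise disjoint: if $W_1^n(\mu_i,\nu)\le2\varepsilon$ and $W_1^n(\mu_{i'},\nu)\le2\varepsilon$ for $i\ne i'$, the triangle inequality forces $W_1^n(\mu_i,\mu_{i'})\le4\varepsilon$, contradicting that $\mathcal{F}_n$ is $(n,4\varepsilon)$-separated. I would then group indices by a nearest point of $S$: pick $\sigma(i)$ with $a_i=W_1^n(\mu_i,\nu_{\sigma(i)})$, set $I_j=\{i:\sigma(i)=j\}$ and $k_j=|I_j|$, so $\sum_j k_j=c_n$ and at most $m_n$ of the $I_j$ are nonempty. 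The crucial estimate is that each block with $k_j\ge2$ obeys $\sum_{i\in I_j}a_i> 2k_j\varepsilon$: within such a block at most one $a_i$ is $\le2\varepsilon$, while for any two indices $i,i'\in I_j$ the triangle inequality gives $a_i+a_{i'}=W_1^n(\mu_i,\nu_j)+W_1^n(\mu_{i'},\nu_j)>4\varepsilon$, and a short optimization over the smallest distance in the block converts these two facts into $\sum_{i\in I_j}a_i>2k_j\varepsilon$.

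Assembling the blocks and discarding the (nonnegative) singleton contributions,
$$\sum_{i=1}^{c_n}a_i\ \ge\ 2\varepsilon\sum_{j:\,k_j\ge2}k_j\ =\ 2\varepsilon\,(c_n-s),\qquad s:=\#\{j:k_j=1\}.$$
Since $m_n=\lceil c_n/2\rceil<c_n$, if every nonempty block were a singleton we would get $c_n\le m_n$, which is impossible; hence at least one block has $k_j\ge2$, and the number of singleton blocks satisfies $s\le(\text{number of nonempty blocks})-1\le m_n-1$. Therefore $\sum_i a_i\ge2\varepsilon(c_n-m_n+1)$, and combined with the reduction of the first paragraph this yields $\mathcal{W}^n(\omega_n,\rho)\ge\frac{c_n-m_n+1}{c_n}\cdot\frac{4\varepsilon}{2}$.

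The main obstacle is securing the sharp constant $c_n-m_n+1$ rather than the weaker $c_n-m_n$ that a naive count produces. The naive argument only records that each target of $S$ can be $2\varepsilon$-close to at most one $\mu_i$, leaving $c_n-m_n$ far atoms and the bound $\frac{c_n-m_n}{c_n}2\varepsilon$; this is genuinely lossy, as the case $c_n=2,\,m_n=1$ already shows, where the true distance is $>2\varepsilon$. Extracting the extra unit requires the block-wise estimate $\sum_{i\in I_j}a_i>2k_j\varepsilon$, which credits \emph{both} members sharing a target via the triangle inequality, together with the pigeonhole observation that not all occupied targets can be singletons. A secondary point to keep clean is the reduction to $a_i=W_1^n(\mu_i,S)$, which is what lets the whole argument run for arbitrary (not merely deterministic) transport plans.
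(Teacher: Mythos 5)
The paper does not actually prove this lemma: it is imported verbatim from Berger--Bochi \cite[Lemma 3.19]{Berger2019} with no argument given, so there is no in-paper proof to compare yours against. On its own merits, your proof is correct and self-contained. The disintegration $\Pi=\frac{1}{c_n}\sum_i\delta_{\mu_i}\otimes\pi_i$ and the reduction to $a_i=W_1^n(\mu_i,S)$ are sound (each $\pi_i$ is supported on $S$ because $\pi_i\leq c_n\rho$), and the block decomposition by nearest support point, combined with the pigeonhole observation that not every occupied block can be a singleton, is exactly what extracts the sharp constant $c_n-m_n+1$ instead of the lossy $c_n-m_n$ that the naive disjoint-balls count gives. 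Two small points to tighten. First, the ``short optimization'' inside a block should be written out: ordering $I_j$ so that $a_{i_1}\leq\cdots\leq a_{i_{k_j}}$, either $a_{i_1}\geq2\varepsilon$ and then $\sum_{i\in I_j}a_i\geq k_j\cdot2\varepsilon$ directly, or $a_{i_1}<2\varepsilon$ and then $a_{i_l}>4\varepsilon-a_{i_1}$ for $l\geq2$ gives $\sum_{i\in I_j}a_i>4(k_j-1)\varepsilon-(k_j-2)a_{i_1}\geq2k_j\varepsilon$, both cases using $k_j\geq2$. Second, your pigeonhole step relies on $m_n<c_n$, i.e.\ $c_n\geq2$; for $c_n=1$ the statement is false as written (take $\rho=\omega_n$), so this hypothesis is implicit in the lemma and worth recording explicitly.
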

Therefore, $Q(\omega_n,n,\varepsilon)\geq m_n$. Let $\omega=\sum\limits_{n=1}^{\infty}2^{-n}\omega_n\in\mathcal{M}(\mathcal{M}_f^{erg}(X))$. Then $\omega\gg\omega_n$ for every $n\in\mathbb{N}$.
\begin{lem}{\rm\cite[Lemma 3.18]{Berger2019}}
	Let $\rho, \rho_1\in\mathcal{M}(\mathcal{M}_f^{erg}(X))$ be such that $\rho\gg\rho_1$. Then for any $n\in\mathbb{N}$ and $\varepsilon>0$, $$Q(\rho,n,\varepsilon)\geq Q(\rho_1,n,\varepsilon).$$
\end{lem}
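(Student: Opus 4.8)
The plan is to prove the inequality by manufacturing, out of a near-optimal quantizer of $\rho$, a quantizer of $\rho_1$ of no larger cardinality, so that $Q(\rho_1,n,\varepsilon)\le Q(\rho,n,\varepsilon)$. Throughout I would work with the reformulation from the preceding proposition, so that $Q(\omega,n,\varepsilon)$ is the least $N$ for which some $\mathcal{F}=\{\mu_1,\dots,\mu_N\}\subset\mathcal{M}(X)$ satisfies $\int W_1^n(\mu,\mathcal{F})\,\mathrm{d}\omega(\mu)\le\varepsilon$. Fix $n$ and $\varepsilon$, set $N:=Q(\rho,n,\varepsilon)$, and let $\mathcal{F}$ be a realizing set, i.e.\ $\int W_1^n(\mu,\mathcal{F})\,\mathrm{d}\rho(\mu)\le\varepsilon$ with $\#\mathcal{F}=N$. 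It then suffices to exhibit a finite set of cardinality at most $N$ that $\varepsilon$-quantizes $\rho_1$.

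The first step I would try is to transfer $\mathcal{F}$ through the absolute continuity $\rho_1\ll\rho$ (which is what $\rho\gg\rho_1$ encodes), imitating the disintegration argument already used to identify the two formulations of $Q$. Take an optimal plan $\pi\in\Pi(\rho,\sigma)$ realizing $\mathcal{W}^n(\rho,\sigma)\le\varepsilon$ with $\sigma$ supported on $\mathcal{F}$, and disintegrate it over its first marginal as $\pi=\int\delta_\mu\otimes\pi_\mu\,\mathrm{d}\rho(\mu)$, where each $\pi_\mu$ is supported on $\mathcal{F}$. Writing $g:=\mathrm{d}\rho_1/\mathrm{d}\rho$, I would reweight and set $\pi_1:=\int\delta_\mu\otimes\pi_\mu\,g(\mu)\,\mathrm{d}\rho(\mu)$; then $\pi_1$ has first marginal $\rho_1$, its second marginal $\sigma_1$ is again supported on $\mathcal{F}$, and $\mathrm{supp}\,\rho_1\subseteq\mathrm{supp}\,\rho$ makes $\pi_1\in\Pi(\rho_1,\sigma_1)$ with $\#\mathrm{supp}\,\sigma_1\le N$.

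The decisive point, and the one I expect to be the main obstacle, is the cost estimate
\[
\int W_1^n(\mu,\mathcal{F})\,\mathrm{d}\rho_1(\mu)=\int W_1^n(\mu,\mathcal{F})\,g(\mu)\,\mathrm{d}\rho(\mu)\le\varepsilon,
\]
since reweighting by $g$ can inflate the integral wherever $g>1$, and absolute continuity alone yields only $\mathrm{supp}\,\rho_1\subseteq\mathrm{supp}\,\rho$, not a bound on this quantity. To get past this I would abandon the naive transfer and instead exploit the concrete way $\rho_1$ sits inside $\rho$, via the concavity of $N\mapsto V_\rho(N):=\min_{\#\mathcal{F}=N}\int W_1^n(\mu,\mathcal{F})\,\mathrm{d}\rho(\mu)$ as a function of $\rho$ (it is an infimum of functionals that are linear in $\rho$): if $\rho\ge\lambda\rho_1$, then writing $\rho$ as a convex combination placing weight $\lambda$ on $\rho_1$ gives $V_\rho(N)\ge\lambda V_{\rho_1}(N)$. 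Converting this back through the monotonicity of $Q(\rho_1,n,\cdot)$ in the scale is what I would use to force the cost bound through and reach $Q(\rho,n,\varepsilon)\ge Q(\rho_1,n,\varepsilon)$. Controlling the scale distortion introduced by the factor $\lambda$ is precisely the hard part, and I would expect the careful handling of this scale to be where the argument must do its real work.
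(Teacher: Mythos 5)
You have correctly located the crux, but the argument does not close, and in fact cannot close as stated: the reweighting step is not merely ``the hard part,'' it is an insurmountable obstruction for the averaged (Wasserstein) cost used in this paper's definition of $Q$. Your concavity argument gives, when $\rho\ge\lambda\rho_1$ for some $\lambda\in(0,1]$, the bound $V_{\rho_1}(N)\le\lambda^{-1}V_\rho(N)$ and hence only $Q(\rho_1,n,\varepsilon/\lambda)\le Q(\rho,n,\varepsilon)$; monotonicity of $Q(\rho_1,n,\cdot)$ in the scale runs in the wrong direction to remove the factor $\lambda^{-1}\ge 1$, and mere absolute continuity $\rho_1\ll\rho$ does not even supply such a $\lambda$, since the density may be unbounded. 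Worse, the same-scale inequality is simply false under these hypotheses: take $X=\{p_1,p_2,p_3\}$ with pairwise distances $1$ and $f=\mathrm{id}$, so that $\mathcal{M}_f^{erg}(X)=\{\delta_{p_1},\delta_{p_2},\delta_{p_3}\}$; put $\rho=\tfrac{98}{100}\delta_{\delta_{p_1}}+\tfrac{1}{100}\delta_{\delta_{p_2}}+\tfrac{1}{100}\delta_{\delta_{p_3}}$ and $\rho_1=\tfrac12\delta_{\delta_{p_2}}+\tfrac12\delta_{\delta_{p_3}}$. Then $\rho\gg\rho_1$, yet $Q(\rho,n,\tfrac1{50})=1$ (quantize at the single point $\delta_{p_1}$, with cost exactly $\tfrac2{100}$), while $Q(\rho_1,n,\tfrac1{50})=2$, because any single center $\mu_1$ incurs cost $\tfrac12 W_1^n(\delta_{p_2},\mu_1)+\tfrac12 W_1^n(\delta_{p_3},\mu_1)\ge\tfrac12 W_1^n(\delta_{p_2},\delta_{p_3})=\tfrac12$.

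For comparison with the paper: there is nothing to compare against, since the lemma is quoted from Berger--Bochi with no proof supplied. The version of the statement that absolute continuity does yield with no scale loss is the one in which $Q$ is an essential-supremum or covering quantity (the minimal number of $\varepsilon$-balls whose union has full measure), because $\rho_1\ll\rho$ forces $\mathrm{supp}\,\rho_1\subseteq\mathrm{supp}\,\rho$; for the averaged cost one must either accept the scale-distorted inequality $Q(\rho_1,n,\varepsilon/\lambda)\le Q(\rho,n,\varepsilon)$ that your concavity argument correctly delivers (and which survives passage to doubly logarithmic orders, though not the fixed-scale claim), or strengthen the hypothesis to a two-sided density bound. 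Your instinct to distrust the naive transfer was sound; the missing realization is that no transfer can succeed because the inequality being asked for is not true with the stated hypotheses and the stated definition of $Q$.
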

Thus we have $$Q(\omega,n,\varepsilon)\geq\lceil \frac{c_n}{2} \rceil,$$ which implies that $Q(\omega)\geq \mathcal{E}(f_{\mathcal{M}},\mathcal{M}_f^{erg})$. Let $\mu=\int_{\mathcal{M}(X)}\nu \,\mathrm{d}\omega(\nu)$. It is easy to check that $\mu\in\mathcal{M}_f(X)$ and its ergodic decomposition is $\omega$. This clearly forces $\mathcal{E}_\mu\geq \mathcal{E}(f_{\mathcal{M}},\mathcal{M}_f^{erg})$.

\subsection{Proof of Theorem \ref{conformal}}

\begin{prop}\label{dimmo}
	For any compact metric space $(X,d)$, we have 
	$$\underline{{\rm dim}}(X,d)
	\leq
	\underline{{\rm mo}}(\mathcal{K}(X), H)
	\leq
	\overline{{\rm mo}}(\mathcal{K}(X),H)
	\leq
	\overline{{\rm dim}}(X,d).$$
\end{prop}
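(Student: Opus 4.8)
The plan is to prove the two outer inequalities; the middle one, $\underline{{\rm mo}}(\mathcal{K}(X),H)\le\overline{{\rm mo}}(\mathcal{K}(X),H)$, is immediate from the definitions since a $\liminf$ never exceeds the corresponding $\limsup$. The whole argument rests on one structural observation: in the Hausdorff metric the hyperspace is exponentially larger than $X$, in the precise sense that
$$2^{S(X,\varepsilon)}-1\le S(\mathcal{K}(X),\varepsilon)\quad\text{and}\quad N(\mathcal{K}(X),\varepsilon)\le 2^{N(X,\varepsilon)}.$$
Once these two bounds are established, applying $\log\log(\cdot)$, dividing by $-\log\varepsilon$, and letting $\varepsilon\to0$ converts the inner $\log$ of the exponent into the box-dimension quotient $\log S(X,\varepsilon)/(-\log\varepsilon)$ (respectively $\log N(X,\varepsilon)/(-\log\varepsilon)$), which yields the claimed bounds. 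I will freely use that, as recorded in the excerpt, the metric order can be computed from either $N$ or $S$, so I may use $S$ for the lower bound and $N$ for the upper bound.

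For the right inequality I would reuse the covering idea from the proof of Theorem \ref{K}. Fix an $\varepsilon$-dense set $E\subset X$ of minimal cardinality $N(X,\varepsilon)$ and let $\mathcal{G}=2^E\setminus\{\emptyset\}$. For an arbitrary $B\in\mathcal{K}(X)$ set $C=\{x\in E: B\cap B(x,\varepsilon)\ne\emptyset\}$; the $\varepsilon$-density of $E$ gives $B\subset C^\varepsilon$, while the definition of $C$ gives $C\subset B^\varepsilon$, so $H(B,C)\le\varepsilon$ and $\mathcal{G}$ is an $\varepsilon$-dense set of $\mathcal{K}(X)$. Hence $N(\mathcal{K}(X),\varepsilon)\le 2^{N(X,\varepsilon)}$, so $\log\log N(\mathcal{K}(X),\varepsilon)\le \log N(X,\varepsilon)+\log\log 2$; dividing by $-\log\varepsilon$ and taking $\limsup$ yields $\overline{{\rm mo}}(\mathcal{K}(X),H)\le\overline{{\rm dim}}(X,d)$.

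For the left inequality I would take a maximal $\varepsilon$-separated set $F\subset X$ with $\#F=S(X,\varepsilon)$ and consider all of its nonempty subsets. If $B,C\subseteq F$ are distinct, a point $x\in B\setminus C$ satisfies $d(x,C)>\varepsilon$, because $x$ and every element of $C$ are distinct points of $F$; consequently $H(B,C)\ge d(x,C)>\varepsilon$, so the family $2^F\setminus\{\emptyset\}$ is an $\varepsilon$-separated set of $\mathcal{K}(X)$ and $S(\mathcal{K}(X),\varepsilon)\ge 2^{S(X,\varepsilon)}-1$. Taking $\log\log$, bounding $2^m-1\ge 2^{m-1}$, and dividing by $-\log\varepsilon$ gives $\underline{{\rm mo}}(\mathcal{K}(X),H)\ge\underline{{\rm dim}}(X,d)$ after a $\liminf$. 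Note this static construction is cleaner than its dynamical counterpart in Lemma \ref{ky}, since without a Bowen metric distinct subsets of a separated set are directly separated and no Hamming/Bernstein estimate is needed.

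The computations are routine; the one point requiring care is verifying that the double logarithm absorbs the error terms. The additive constant $\log\log 2$, the deletion of the empty set, and the replacement of $\log S(X,\varepsilon)$ by $\log(S(X,\varepsilon)-1)$ all produce contributions that vanish once divided by $-\log\varepsilon\to\infty$, using that $S(X,\varepsilon),N(X,\varepsilon)\to\infty$ when $X$ is infinite; when $X$ is finite every quantity stays bounded and all four expressions in the proposition equal $0$, so the chain of inequalities holds trivially.
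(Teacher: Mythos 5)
Your proof is correct, and while your upper bound is exactly the paper's argument (covering $\mathcal{K}(X)$ by the collection $2^E\setminus\{\emptyset\}$ of nonempty subsets of an $\varepsilon$-dense set $E$, giving $N(\mathcal{K}(X),\varepsilon)\le 2^{N(X,\varepsilon)}$), your lower bound takes a genuinely different and more elementary route. The paper recycles the machinery of Lemma \ref{ky}: it takes $N=8\lfloor S(X,\varepsilon)/8\rfloor$ points forming an $\varepsilon$-separated set, passes to the Hamming-separated family $F'$ of balanced $0$--$1$ functions (whose cardinality $e^{cN}$ is controlled via the Bernstein inequality), and uses only the sets $B_\phi$, $\phi\in F'$, as its separated family in $\mathcal{K}(X)$. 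You observe instead that no such combinatorial selection is needed in the hyperspace setting: if $F$ is $\varepsilon$-separated in $X$, then any two distinct nonempty subsets $B,C\subseteq F$ satisfy $H(B,C)\ge d(x,C)>\varepsilon$ for any $x\in B\setminus C$, so the \emph{full} power set $2^F\setminus\{\emptyset\}$ is already $\varepsilon$-separated in the Hausdorff metric. This yields the sharper bound $S(\mathcal{K}(X),\varepsilon)\ge 2^{S(X,\varepsilon)}-1$ (versus the paper's $e^{c(S(X,\varepsilon)-7)}$), avoids Lemma \ref{ky} and the Bernstein estimate entirely, and makes transparent why the Hamming device is genuinely needed only in the measure setting of Lemma \ref{ky}---where convex combinations force one to quantify how much mass two mixtures assign to far-apart supports in order to bound a Wasserstein distance from below---and not for closed sets, where mere distinctness already produces Hausdorff separation. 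Both approaches give the same metric-order conclusion, since after taking $\log\log$ the two bounds differ by terms that vanish upon division by $-\log\varepsilon$; your closing remarks on absorbing the error terms and on the degenerate case of finite $X$ are also correct.
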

\begin{proof}
	Let $E\subset X$ be an $\varepsilon$-dense set of $X$. Consider $\mathcal{B}:=2^E\setminus\{\emptyset \}$, i.e., the collection of non-empty subset of $E$. It is easy to check that $\mathcal{B}\subset\mathcal{K}(X)$ is an $\varepsilon$-dense set. Hence, $N(\mathcal{K}(X),\varepsilon)\leq 2^{N(X,\, \varepsilon)}$, which implies $\overline{\text{mo}}(\mathcal{K}(X),H)
	\leq
	\overline{\text{dim}}(X,d)$.
	
	For $\varepsilon>0$, set $N=8\lfloor \frac{S(X,\, \varepsilon)}{8} \rfloor$. Then $S(X,\varepsilon)-7\leq N\leq S(X,\varepsilon)$. Pick a subset $E=\{ x_1,\ldots,x_N \}\subset X$ that is $\varepsilon$-separated. Let $F, F'$ be the sets in the proof of Lemma \ref{ky}. For each $\phi\in F'$, set $$B_\phi=\{ x_i:\phi(i)=1 \},$$
	and $\mathcal{F}:=\{ B_\phi:\phi\in F' \}.$ We see at once that $\mathcal{F}\subset\mathcal{K}(X)$ is $\varepsilon$-separated. Hence we have $S(\mathcal{K}(X),\varepsilon)\geq e^{cN}\geq e^{c(S(X,\varepsilon)-7)}$, which leads to $\underline{\text{dim}}(X,d)
	\leq
	\underline{\text{mo}}(\mathcal{K}(X), H).$
\end{proof}

For $\varepsilon>0$, we say that $B, C\in\mathcal{K}(X)$ are $\varepsilon$-split if $$\min\{d(x,y) :x\in B, y\in C\}>\varepsilon,$$ i.e., $d(B,C)>\varepsilon$. Denote by $B(\mathcal{A},\varepsilon)$ the maximal number of pairwise $\varepsilon$-split elements in $\mathcal{A}\subset\mathcal{K}(X)$. A similar proof of Lemma \ref{ky} shows that:
\begin{lem}\label{key2}
	Let $\mathcal{A}$ be a subset of $\mathcal{K}(X)$ satisfying the finite union property. Then $$\underline{{\rm mo}}(\mathcal{A},H)
	\geq
	\liminf_{\varepsilon\to0}\frac{\log B(\mathcal{A},\varepsilon)}{-\log\varepsilon}.$$
\end{lem}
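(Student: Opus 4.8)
The plan is to establish the lower bound
\[
\underline{\mathrm{mo}}(\mathcal{A},H)\geq\liminf_{\varepsilon\to0}\frac{\log B(\mathcal{A},\varepsilon)}{-\log\varepsilon}
\]
by converting a family of pairwise $\varepsilon$-split elements of $\mathcal{A}$ into an $\varepsilon$-separated family in $(\mathcal{K}(X),H)$ whose cardinality is exponentially large in the number of split elements. This is the static (non-dynamical) analogue of Lemma \ref{ky} and its counterpart in the proof of Theorem \ref{Ex}, so I would reuse the entire combinatorial machinery built there: the Hamming-code set $F'$ and the Bernstein-inequality counting (Lemma \ref{berstein}) which guarantees a $\frac{N}{4}$-separated subset $F'\subset F$ with $\#F'\geq e^{cN}$ for large $N$ and some constant $c>0$.

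First I would fix $\varepsilon>0$, set $N:=8\lfloor \frac{B(\mathcal{A},\varepsilon)}{8}\rfloor$ so that $B(\mathcal{A},\varepsilon)-7\leq N\leq B(\mathcal{A},\varepsilon)$, and choose $B_1,\dots,B_N\in\mathcal{A}$ that are pairwise $\varepsilon$-split. Invoking the sets $F$ and $F'$ from the proof of Lemma \ref{ky}, for each $\phi\in F'$ I would form the union $B_\phi=\bigcup_{\phi(i)=1}B_i$, which lies in $\mathcal{A}$ by the finite union property, and set $\mathcal{F}=\{B_\phi:\phi\in F'\}$. The key verification, exactly as in the proof of Theorem \ref{Ex} but with $d_n$ replaced by $d$, is that $\mathcal{F}$ is $\varepsilon$-separated in the Hausdorff metric $H$: for distinct $\phi_1,\phi_2\in F'$ there is an index $i$ with $B_i\subset B_{\phi_1}\setminus B_{\phi_2}$, and since $d(B_i,B_j)>\varepsilon$ for every $j$ with $\phi_2(j)=1$, no point of $B_i$ lies within $\varepsilon$ of $B_{\phi_2}$, forcing $H(B_{\phi_1},B_{\phi_2})>\varepsilon$. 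This yields $S(\mathcal{K}(X),\varepsilon)\geq \#\mathcal{F}=\#F'\geq e^{cN}\geq e^{c(B(\mathcal{A},\varepsilon)-7)}$.

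Taking logarithms twice and dividing by $-\log\varepsilon$ gives, for small $\varepsilon$,
\[
\frac{\log\log S(\mathcal{A},\varepsilon)}{-\log\varepsilon}\geq\frac{\log c+\log\bigl(B(\mathcal{A},\varepsilon)-7\bigr)}{-\log\varepsilon},
\]
and passing to the liminf as $\varepsilon\to0$ produces the claimed bound (the additive constants $\log c$ and the shift by $7$ vanish in the limit since $-\log\varepsilon\to\infty$). I expect the main obstacle to be purely bookkeeping rather than conceptual: one must confirm that the union operation respects the split condition so that distinct $\phi$ genuinely give $H$-separated sets, and that the $\frac{N}{4}$-separation of $F'$ in the Hamming metric is what guarantees the existence of a distinguishing index $i$ with $B_i\subset B_{\phi_1}\setminus B_{\phi_2}$. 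Since all the heavy counting is inherited verbatim from Lemma \ref{ky}, the argument reduces to checking these set-theoretic inclusions carefully, which is the same verification already carried out in the proof of Theorem \ref{Ex}.
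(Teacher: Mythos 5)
Your proposal is correct and takes essentially the same approach the paper intends: the paper offers no separate proof of Lemma \ref{key2}, stating only that ``a similar proof of Lemma \ref{ky}'' gives it, and your static adaptation --- pairwise $\varepsilon$-split sets $B_1,\dots,B_N$, the Hamming/Bernstein family $F'$ with $\#F'\geq e^{cN}$, unions $B_\phi$ kept inside $\mathcal{A}$ by the finite union property, and the resulting $\varepsilon$-separated family of cardinality $e^{c(B(\mathcal{A},\varepsilon)-7)}$ --- is exactly that argument. The only cosmetic slips (writing $S(\mathcal{K}(X),\varepsilon)$ where $S(\mathcal{A},\varepsilon)$ is meant, and crediting the $\frac{N}{4}$-separation of $F'$ for the existence of a distinguishing index when mere distinctness of $\phi_1,\phi_2$ in $F$ already suffices, the $\frac{N}{4}$-separation being needed only for the cardinality count) do not affect validity.
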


Let $M$ be a Riemannian manifold, $U\subset M$ be an open set and $f:U\to M$ be $C^{1+\alpha}$ map which leaves invariant a compact subset $K$. Call $(K,f)$ is a \emph{conformal expanding repeller} if $f$ is conformal and expanding at $K$ (that is, for any $x\in K$, the derivative $Df(x)$ expands the Riemannian metric by a scalar factor greater than $1$). Then the box dimension of $\Lambda$ exists and coincides with its Hausdorff dimension (see \cite[Cro. 9.1.7]{Przytycki2010}).
\begin{proof}[Proof of Theorem \ref{conformal}]
	From definition of metric order, we have
	$$\limsup_{\varepsilon\to0}\frac{\log\log N^{f|\Lambda}(\varepsilon)}{-\log\varepsilon}=
	\overline{\text{mo}}(\mathcal{K}_{f|\Lambda }(\Lambda),H).$$
	By Proposition \ref{dimmo},
	$$\overline{\text{mo}}(\mathcal{K}_{f|\Lambda }(\Lambda),H)
	\leq
	\overline{\text{mo}}(\Lambda,H)
	\leq
	\overline{\text{dim}}(\Lambda)=b.$$

	On the other hand, there exists an ergodic invariant measure $\mu$ support on $K$ of maximal dimension, such that
	$b=\frac{h_\mu}{\chi_\mu}$, where $\chi_\mu$, $h_\mu$ denote the lyapunov exponent, the measure entropy of $\mu$ respectively (\cite[$\mathcal{x}$ 9.1]{Przytycki2010}). 
	For $\varepsilon_n=e^{(\chi_\mu+3\delta)(n+1)}$, $n\in\mathbb{N}$, there exists a set $G_n$ composed by $(n+n_0)$-period points, and $\#G_n\geq e^{(h_\mu-\delta)n}$; set $\Pi_n=\cup_{k\geq0}f^k(G_n)$, then $\Pi_n$ is $(d,\varepsilon_n)$-separated (\cite[$\mathcal{x}$ 2.1]{Berger2019}).
	
	Let $\mathcal{B}_n=\{ \text{orb}_f(y):y\in G_n \}\subset\mathcal{K}_{f|\Lambda}(\Lambda)$, where $\text{orb}_f(y)=\{f^ky:k\in\mathbb{N}_0 \}$. Then $\mathcal{B}_n$ is $\varepsilon$-split. Moreover,
	$$\#\mathcal{B}_n\geq \frac{\#G_n}{n+n_0}\geq e^{(h_\mu-2\delta)n},$$ for large $n$.
	
	Now, given $\varepsilon>0$ sufficiently small, take $n$ such that $\varepsilon_n\leq\varepsilon\leq\varepsilon_{n-1}$. We have $B(\mathcal{K}_{f|\Lambda }(\Lambda),\varepsilon)\geq B(\mathcal{K}_{f|\Lambda }(\Lambda),\varepsilon_n)\geq\#\mathcal{B}_n$.
	Thus
	$$\frac{\log B(\mathcal{K}_{f|\Lambda }(\Lambda),\varepsilon)}{-\log\varepsilon}
	\geq
	\frac{\#\mathcal{B}_n}{-\log\varepsilon}\geq
	\frac{h_\mu-2\delta}{\chi_\mu+3\delta}.$$
	So Lemma \ref{key2} yields 
	$\underline{\text{mo}}(\mathcal{K}_{f|\Lambda }(\Lambda),H)
	\geq
	(h_\mu-2\delta)/(\chi_\mu+3\delta)$. As $\delta$ is arbitrarily chose to $0$, we conclude that 
	$\underline{\text{mo}}(\mathcal{K}_{f|\Lambda }(\Lambda),H)
	\geq{h_\mu}/{\chi_\mu}=b$.
\end{proof}

\section{Further argument}
	Recently, we notice a paper {\rm\cite{Kiriki2019}} on arXiv, which defined the pointwise emergence of $f$ at point $x\in X$ by
	\begin{align*}
	\mathcal{E}_x(\varepsilon)&=\min\left\{ N: \exists\mathcal{F}\subset\mathcal{M}(X)\ \text{with}\ \#\mathcal{F}=N\ \text{such that}\ \limsup_{n\to\infty} W_1(e^f_n(x),\mathcal{F})\leq\varepsilon \right\}\\
	&=N(V(x),\varepsilon),
	\end{align*}
	and proved the following result:
	\begin{thm}{\rm\cite{Kiriki2019}}
		Let $(X,f)$ be full shift on $m$ symbols. Then there exists a residual subset $R\subset X$ such that for all $x\in R$
		\begin{equation*}
		\lim_{\varepsilon\to0}\frac{\log\log \mathcal{E}_x(\varepsilon)}{-\log\varepsilon}={\rm dim}(X).
		\end{equation*}
	\end{thm}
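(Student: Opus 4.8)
The plan is to recognize the displayed quantity as the metric order of the $\omega$-limit set $V(x)$ and, via a Baire-category argument, to reduce the whole statement to the single identity ${\rm mo}(\mathcal{M}_f(X),W_1)={\rm dim}(X)$. Write $X=\{1,\dots,m\}^{\mathbb{N}}$ with $d(x,y)=2^{-\min\{i\geq1:\,x_i\neq y_i\}}$, so that $N(X,2^{-k})=m^k$ and hence ${\rm dim}(X)=\log_2 m$. Since $\mathcal{E}_x(\varepsilon)=N(V(x),\varepsilon)$, the quantity under study is exactly $\lim_{\varepsilon\to0}\frac{\log\log N(V(x),\varepsilon)}{-\log\varepsilon}={\rm mo}(V(x))$. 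The full shift has the specification property, and a standard argument (fix a countable dense set $\{\mu_j\}\subset\mathcal{M}_f(X)$; for each $j$ and each $M$, specification makes the set of $x$ some of whose empirical measures $e^f_n(x)$ with $n\geq M$ lies within $1/j$ of $\mu_j$ open and dense) shows that $R:=\{x\in X:V(x)=\mathcal{M}_f(X)\}$ is a dense $G_\delta$. Thus it suffices to compute ${\rm mo}(\mathcal{M}_f(X),W_1)$, since for $x\in R$ we have $V(x)=\mathcal{M}_f(X)$.

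For the upper bound I would only note that $\mathcal{M}_f(X)\subset\mathcal{M}(X)$ and that covering numbers are monotone, so that $\overline{{\rm mo}}(\mathcal{M}_f(X),W_1)\leq{\rm mo}(\mathcal{M}(X),W_1)={\rm dim}(X)$ by the theorem of Bolley--Kloeckner--Berger recalled in Section 2.3. This direction is immediate.

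The substance is the matching lower bound $\underline{{\rm mo}}(\mathcal{M}_f(X),W_1)\geq{\rm dim}(X)$, which I would obtain by transferring the mechanism of Lemma \ref{ky} to the \emph{static} metric $W_1$. Fix $\varepsilon=2^{-k}$ and choose one periodic point $\tilde z_w=\overline{w}$ (of period $k$) per cyclic class of words $w\in\{1,\dots,m\}^k$; there are at least $m^k/k$ such classes. Their orbit measures $\nu_w=\frac1k\sum_{j=0}^{k-1}\delta_{f^j\tilde z_w}$ are $f$-invariant and have pairwise $2^{-k}$-separated supports, because two length-$k$ words from distinct cyclic classes first disagree within their first $k$ coordinates, forcing $d\geq2^{-k}$ between any two points of the respective orbits. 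Consequently, for probability vectors $a,b$ one checks via Kantorovich duality, using a $1$-Lipschitz function obtained by McShane extension from the values $\tfrac{2^{-k}}{2}\,\mathrm{sign}(a_w-b_w)$ on the supports, that $W_1\bigl(\sum_w a_w\nu_w,\sum_w b_w\nu_w\bigr)\geq 2^{-k-1}\|a-b\|_1$. Since $\mathcal{M}_f(X)$ is convex, all such combinations remain invariant, so the proof of Lemma \ref{ky} applies verbatim with $W_1^n$ replaced by $W_1$ and the $(n,\varepsilon)$-apart measures replaced by the $\nu_w$: the Hamming-separated family $F'$ yields $e^{cm^k/k}$ invariant measures that are pairwise $W_1$-separated by at least $2^{-k}/4$. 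Hence $S_{\mathcal{M}}(\mathcal{M}_f(X),2^{-k}/4)\geq e^{cm^k/k}$, and taking $\varepsilon_k=2^{-k}/4$ gives $-\log\varepsilon_k=k\log2+O(1)$ while $\log\log S\geq\log(cm^k/k)=k\log m+O(\log k)$, so the ratio tends to $\log m/\log2={\rm dim}(X)$; monotonicity of $S_{\mathcal{M}}(\mathcal{M}_f(X),\cdot)$ interpolates the bound to all small $\varepsilon$.

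Combining the two estimates gives ${\rm mo}(\mathcal{M}_f(X),W_1)={\rm dim}(X)$, and since $V(x)=\mathcal{M}_f(X)$ for every $x$ in the residual set $R$, the theorem follows. I expect the main obstacle to be precisely this lower bound: one must produce doubly-exponentially (in $-\log\varepsilon$) many \emph{invariant} measures that stay $W_1$-separated at scale $\varepsilon$, which is delicate because invariant measures cannot be spatially localized the way the point masses in the proof of ${\rm mo}(\mathcal{M}(X))={\rm dim}(X)$ are. The periodic-orbit measures with separated supports plus convexity are designed to circumvent this, and one only needs to verify that the $1/k$ loss in the orbit count (equivalently an $O(\log k)$ term) does not affect the exponent. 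The residuality of $R$ and the upper bound are comparatively routine.
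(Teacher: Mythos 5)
This statement is not proved in the paper at all: it is quoted verbatim from Kiriki--Nakano--Soma \cite{Kiriki2019} as background for Section 5, so there is no in-paper argument to compare yours against. Judged on its own, your proof is correct and essentially self-contained, and it is pleasing that every ingredient you use is already present elsewhere in the paper: the identification $\mathcal{E}_x(\varepsilon)=N(V(x),\varepsilon)$ and the residuality of $R=\{x:V(x)=\mathcal{M}_f(X)\}$ via Sigmund's theorem are both invoked in Section 5; the upper bound is the monotonicity of covering numbers plus the Bolley--Kloeckner--Berger identity ${\rm mo}(\mathcal{M}(X),W_1)={\rm dim}(X)$ recalled in Section 2.3; and the lower bound is exactly the $n=1$ (static) instance of the Hamming/Bernstein mechanism of Lemma \ref{ky}, applied to the convex set $\mathcal{M}_f(X)$ with the $(1,\varepsilon)$-apart family furnished by the $\geq m^k/k$ periodic-orbit measures of cyclically inequivalent $k$-words. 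Your Kantorovich-duality verification that support-separation at scale $2^{-k}$ forces $W_1\geq 2^{-k-1}\|a-b\|_1$ is a clean substitute for the transport-plan estimate in Lemma \ref{ky}, and the $1/k$ loss in the orbit count indeed only contributes $O(\log k)$ inside the double logarithm, so it does not affect the exponent; the interpolation between the scales $\varepsilon_k=2^{-k}/4$ is handled correctly by monotonicity. The only point worth double-checking in a written version is the small mismatch between the paper's convention for ``apart'' (distance $\geq\varepsilon$) and ``separated'' (distance $>\varepsilon$), which is harmless here since one can work at scale $2^{-k}$ versus $2^{-k-1}$. I would also note that your route singles out the structural reason the theorem holds, namely ${\rm mo}(\mathcal{M}_f(X),W_1)={\rm dim}(X)$, which is a statement of independent interest not isolated in the paper.
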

    If we analogously define the dynamical pointwise emergence by 
    \begin{align*}
    \mathcal{E}_x(n,\varepsilon)&=\min\left\{ N: \exists\mathcal{F}\subset\mathcal{M}(X)\ \text{with}\ \#\mathcal{F}=N\ \text{such that}\ \limsup_{n\to\infty} W_1^n(V(x),\mathcal{F})\leq\varepsilon \right\}\\
    &=N_\mathcal{M}(V(x),n,\varepsilon).
    \end{align*}
    By ergodic decomposition theorem, there exists a Borel set $X_0\subset X$ such that for any $\mu\in\mathcal{M}_f(X)$, $\mu(X_0)=1$ and for each $x\in X_0$, $V(x)\subset\mathcal{M}_f^{erg}(X)$ is a single point set. Thus $\mathcal{E}_x(n,\varepsilon)=1$ for any $x\in X_0$.

    If $(X,f)$ has Bowen specification property, then the set $\{ x\in X: V(x)=\mathcal{M}_f(X) \}$ is residual in $X$ (\cite{Sigmund1974}). As a result of Theorem \ref{expansive}, we have
    \begin{cro}
    	If $(X,f)$ is positive expansive and has specification property, then there exists a residual set $R\subset X$ such that for all $x\in R$,
    	\begin{equation*}
    		\lim_{\varepsilon\to0}\limsup_{n\to\infty}\frac{\log\log\mathcal{E}_x(n,\varepsilon)}{n}=h_{top}(f).
    	\end{equation*}
    \end{cro}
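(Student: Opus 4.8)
The plan is to reduce the Corollary to the variational-type statement already established in Theorem \ref{expansive}, exploiting the genericity of points whose empirical measures fill out the whole invariant measure simplex. The key observation is that $\mathcal{E}_x(n,\varepsilon) = N_{\mathcal{M}}(V(x),n,\varepsilon)$, so the pointwise quantity for a given $x$ is literally the dynamical emergence of the subset $V(x) \subset \mathcal{M}(X)$. Thus if we can identify a residual set of points $x$ on which $V(x)$ is large enough to carry the full entropy order, the result will follow by squeezing between the global upper bound $\mathcal{E}(f_{\mathcal{M}}) = h_{top}(f)$ from Theorem \ref{mh} and the lower bound coming from Theorem \ref{expansive}.

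First I would invoke the theorem of Sigmund \cite{Sigmund1974}, cited in the excerpt, which guarantees that under the Bowen specification property the set $R := \{x \in X : V(x) = \mathcal{M}_f(X)\}$ is residual in $X$. By Lemma \ref{sp+exp}, positive expansiveness together with the specification property yields the Bowen specification property, so this set $R$ is indeed residual under our hypotheses. For any $x \in R$ we then have $V(x) = \mathcal{M}_f(X)$, and hence $\mathcal{E}_x(n,\varepsilon) = N_{\mathcal{M}}(V(x),n,\varepsilon) = N_{\mathcal{M}}(\mathcal{M}_f(X),n,\varepsilon)$ for every $n$ and $\varepsilon$. This collapses the pointwise emergence into the dynamical emergence of the fixed subset $\mathcal{M}_f(X)$, which is independent of the particular $x \in R$.

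Next I would compute the entropy order of this fixed subset. Taking $\log\log$, dividing by $n$, and passing to the limits gives
$$\lim_{\varepsilon\to0}\limsup_{n\to\infty}\frac{\log\log\mathcal{E}_x(n,\varepsilon)}{n} = \mathcal{E}(f_{\mathcal{M}},\mathcal{M}_f(X)).$$
For the upper bound I would use monotonicity (Proposition \ref{31}(1)) together with Theorem \ref{mh}: since $\mathcal{M}_f(X) \subset \mathcal{M}(X)$, we get $\mathcal{E}(f_{\mathcal{M}},\mathcal{M}_f(X)) \leq \mathcal{E}(f_{\mathcal{M}}) = h_{top}(f)$. For the lower bound I would appeal to the proof of Theorem \ref{expansive}, where it is shown that $\mathcal{E}(f_{\mathcal{M}},\mathcal{M}_f(X)) \geq h_{top}(f)$ via the construction of pairwise $(n,\rho)$-apart ergodic measures supported on periodic orbits and an application of Lemma \ref{ky}. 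Combining the two bounds yields $\mathcal{E}(f_{\mathcal{M}},\mathcal{M}_f(X)) = h_{top}(f)$, which holds uniformly for every $x \in R$.

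The only point requiring care, rather than a genuine obstacle, is the interchange of the two ambient limsups: the definition of $\mathcal{E}_x(n,\varepsilon)$ in the excerpt contains a $\limsup_{n\to\infty}$ inside the $\min$ defining the spanning number, while the conclusion takes an outer $\limsup_{n\to\infty}$ over $n$ as well. I would resolve this by observing that once $V(x) = \mathcal{M}_f(X)$ is a fixed compact set, the relevant quantity is exactly $N_{\mathcal{M}}(V(x),n,\varepsilon)$ as in Definition \ref{mem}, so the expression matches the entropy order $\mathcal{E}(f_{\mathcal{M}},V(x))$ verbatim and no genuine double-limsup pathology arises. With that identification in hand the chain of equalities is immediate, and since $R$ is residual the stated conclusion holds on a residual set, completing the proof.
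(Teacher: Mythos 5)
Your proposal is correct and follows essentially the same route as the paper: identify the residual set $R=\{x:V(x)=\mathcal{M}_f(X)\}$ via Sigmund's theorem (using Lemma \ref{sp+exp} to upgrade to the Bowen specification property), rewrite $\mathcal{E}_x(n,\varepsilon)$ as $N_{\mathcal{M}}(\mathcal{M}_f(X),n,\varepsilon)$, and squeeze $\mathcal{E}(f_{\mathcal{M}},\mathcal{M}_f(X))$ between the upper bound from Theorem \ref{mh} and the lower bound established in the proof of Theorem \ref{expansive}. The paper compresses this into a one-line deduction, but the content is identical.
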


\noindent\textbf{Acknowledgments:} The authors are grateful to Prof. Hanfeng Li for pointing out an error in an early version of this paper.
The work was supported by the NNSF of China (11971236, 11601235, 11671208, 11431012), the NSF of Jiangsu Province (BK20161014), the China Postdoctoral Science Foundation (2016M591873) and the China Postdoctoral Science Special Foundation (2017T100384).  The work was also funded by the Priority Academic Program Development of Jiangsu Higher Education Institutions.

\end{document}